\title{
M-convex Function Minimization \\
Under L1-Distance Constraint
}
\author{Akiyoshi Shioura%
\footnote{
Department of Industrial Engineering and Economics,
Tokyo Institute of Technology,
Tokyo 152-8550, Japan
  \texttt{shioura.a.aa@m.titech.ac.jp}}
}
\newcommand{\suppp}{{\rm supp}\sp{+}}
\newcommand{\suppm}{{\rm supp}\sp{-}}
\def\0{\mbox{\bf 0}}
\def\1{\mbox{\bf 1}}
\def\phi{\varphi}
\def\epsilon{\varepsilon}
\def\R{{\mathbb{R}}}
\def\Z{{\mathbb{Z}}}
\def\Oh{{\rm O}}
\def\Mnat{{M$^\natural$}}
\def\Rinf{\R \cup \{+ \infty\}}
\def\P*{{\rm P}_*}
\newcommand{\dom}{{\rm dom\,}}
\def\mycenter{x_{\rm c}}
\def\xcirc{x^\circ}
\def\xbullet{x^\bullet}
\newtheorem{theorem}{Theorem}[section]
\newtheorem{lemma}[theorem]{Lemma}
\newtheorem{corollary}[theorem]{Corollary}
\newtheorem{proposition}[theorem]{Proposition}
\newtheorem{remark}[theorem]{Remark}
\begin{document}

\maketitle

\begin{abstract}
 In this paper we consider a new
problem of minimizing an M-convex function under L1-distance constraint (MML1);
the constraint is given by 
an upper bound for L1-distance between a feasible solution
and a given ``center.''
 This is motivated by a nonlinear integer programming problem  for
re-allocation of dock capacity
in a bike sharing system discussed by Freund et al.~(2017).
 The main aim of this paper is to better understand the combinatorial structure  of
the dock re-allocation problem through the connection with M-convexity, and 
show its polynomial-time solvability using this connection.
 For this, we first show that
the dock re-allocation problem can be reformulated in the form of (MML1).
 We then present a pseudo-polynomial-time algorithm for (MML1)
based on steepest descent approach.
 We also propose two polynomial-time algorithms for (MML1) by replacing
the L1-distance constraint with a simple linear constraint.
 Finally, we apply the results for (MML1) to the dock re-allocation problem 
to obtain a pseudo-polynomial-time steepest descent algorithm
and also  polynomial-time algorithms for this problem.
 The proposed algorithm is based on
a proximity-scaling algorithm for a relaxation of 
the dock re-allocation problem, which is of interest in its own right. 
\end{abstract}

\section{Introduction}
\setcounter{equation}{0}

 The concepts of M-convexity and \Mnat-convexity
for functions in integer variables
play a primary role in the theory of
discrete convex analysis \cite{Murota03book}.
 M-convex function, introduced by Murota \cite{Mstein,Mdca}, 
is defined by a certain exchange axiom (see Section~2 for a precise definition),
and enjoys various nice properties as ``discrete
 convexity''
such as a local characterization for global minimality, 
extensibility to ordinary convex functions, conjugacy, duality, etc.
 \Mnat-convex function is introduced by
Murota and Shioura \cite{MS99} as a variant of M-convex function. 
 While the class of \Mnat-convex functions
properly contains that of M-convex functions,
the concept of \Mnat-convexity is essentially equivalent to 
M-convexity in some sense (see, e.g., \cite{Murota03book}). 
 Minimization of an M-convex function is 
the most fundamental optimization problem concerning M-convex functions,
and a common generalization of
the separable convex resource
allocation problem under a submodular constraint and some classes of
nonseparable convex function minimization on integer lattice points. 
  M-convex function minimization 
can be solved by a steepest descent algorithm 
(or greedy algorithm) that runs in pseudo-polynomial time
\cite{Murota03book,Murota03},
and various polynomial-time algorithms have been 
proposed~\cite{MMS2002,Sh1998,Shioura04,T-scal}.

 In this paper, we consider a new problem of minimizing an M-convex function
under the L1-distance constraint, which is formulated as follows:
\[
 \begin{array}{l|lll}
\mbox{(MML1)}& \mbox{Minimize}  & f(x) &  \\
& \mbox{subject to} 
& \sum_{i=1}^n x(i) = \theta, \\
& & \|x - \mycenter \|_1 \le 2\gamma, \\
& 
 & x \in \dom f,
 \end{array}
\]
where $\theta, \gamma \in \Z$,
$f: \Z^n \to \Rinf$ is an M-convex function
such that $\sum_{i=1}^n x(i) = \theta$ holds for 
every $x \in \Z^n$ with $f(x) < + \infty$,
and $\mycenter$ is a vector  (called  the ``center'')
with $f(\mycenter) < + \infty$ and $\sum_{i=1}^n \mycenter(i) = \theta$.
 This problem is motivated by
a nonlinear integer programming problem  for
re-allocation of dock-capacity in a bike sharing system \cite{FHS2017}.

 In a bike sharing system, many bike stations are located
around a city so that  users can rent and return bikes there.
 Each bike station has several docks and bikes;
some docks are equipped with bikes,
and the other docks are kept open
so that users can return bikes at the station.
 The numbers of docks with bike and of open docks
change as time passes, and
it is possible that some users cannot rent or return a bike at a station
due to the shortage of bikes or open docks,
and in such situation users feel dissatisfied.
 To reduce users' dissatisfaction, operators of a bike sharing system
need to re-allocate docks (and bikes) among bike stations appropriately.
 Change to a new allocation, however, 
requires the movement of docks and bikes, which
yields some amount of cost.
 Therefore, it is desirable that 
a new allocation is not so different from the current allocation.
 Hence, the task of 
operators in a bike sharing system is to
minimize users' dissatisfaction by changing the allocation of docks,
while bounding the number of docks to be moved in the re-allocation.

 This problem, which we refer to as
the \textit{dock re-allocation problem},
is discussed by  Freund, Henderson, and Shmoys~\cite{FHS2017} 
and 
formulated  as follows%
\footnote{
While the first constraint is given as an inequality
$\sum_{i=1}^n (d_i + b_i)\le D+ B$ in \cite{FHS2017},
it is implicitly assumed in \cite{FHS2017}
that the inequality holds with equality.
Indeed, the algorithm in \cite{FHS2017}
applies only to the problem with the equality constraint.
}:
\[
 \begin{array}{l|lll}
\mbox{(DR)} & \mbox{Minimize }  & 
 \sum_{i=1}^n c_i(d(i), b(i))\\
& \mbox{subject to } & 
  \sum_{i=1}^n (d(i) + b(i)) = D+ B,\\
& & 
  \sum_{i=1}^n  b(i) \le  B,\\
& & 
  \sum_{i=1}^n  
|(d(i)+b(i)) - (\bar{d}(i) + \bar{b}(i))| \le   2\gamma,\\
& & \ell(i) \le d(i) + b(i) \le u(i), \ 
 d(i), b(i) \in \Z_+\quad  (i \in N).
 \end{array}
\]
 Here, $n \in \Z$ denotes the number of bike stations
and $N = \{1, 2, \ldots, n\}$.
 For a station $i \in N$,
we denote by $b(i), {d}(i) \in \Z_+$, respectively,
the decision variables representing
the numbers of docks with bike and of open docks
allocated at the station.
 The expected  number of dissatisfied users at the station $i$
is represented by a function $c_i: \Z^2_+ \to \R$
in variables $d(i)$ and $b(i)$,
and shown to have the property of \textit{multimodularity}
(see Section~2 for the definition). 

 The first constraint in (DR) means that the total number
of docks (i.e., docks with bike and open docks)
is equal to a fixed constant $D + B$.
 The second constraint gives an upper bound for the total number of docks with bike.
 The third constraint, given in the form of
L1-distance constraint, means that  the difference
between the current and the new allocations of docks should be small,
where $\bar{d}(i)$ and $\bar{b}(i)$ denote, respectively,
the numbers of docks with bike and of open docks
at the station $i$ in the current allocation.
 In addition, 
the number of docks $d(i)+b(i)$ at each station $i$ should be
between 
lower and upper bounds  $[\ell(i), u(i)]$, as represented by
the fourth constraint.

 For the problem (DR),  Freund et al.~\cite{FHS2017} propose
a steepest descent (or greedy) algorithm that repeatedly update 
a constant number of variables by $\pm 1$, and prove by using
the multimodularity of the objective function that
the algorithm finds an optimal solution of (DR) in at most $\gamma$ iterations.
 Hence, the problem (DR) can be solved in pseudo-polynomial time,
while it is not known so far whether 
(DR) can be solved in polynomial time.


{\bf Our Contribution} \qquad
 The main aim of this paper is to 
better understand the combinatorial structure of the problem (DR)
through the connection with M-convexity, 
and to provide polynomial-time algorithms for (DR)
by using the connection.

 We first show that  the dock re-allocation problem (DR) can 
be reformulated in the form of 
the minimization of an M-convex function under the L1-distance constraint (MML1),
where we regard $d(i)+b(i)$ as a single variable
(see Section~\ref{sec:DR-Mconv} for details).

 We then consider the problem (MML1)
and present a steepest descent algorithm that runs in
pseudo-polynomial time.
 While it is known that unconstrained 
M-convex function minimization (without the L1-distance constraint)
can be solved by a certain steepest descent algorithm
(see \cite{Murota03book,Murota03}; see also Section~\ref{sec:sda-MML1} for details),
a naive application of the algorithm does not work for 
the problem (MML1), due to the L1-distance constraint.
 Nevertheless, we prove in Section~\ref{sec:sda-MML1} that
if the center $\mycenter$ is used as an initial solution of the algorithm,
then the steepest descent algorithm finds an optimal solution
in $\gamma$ iterations.
 Moreover, we prove a stronger statement
that for each $k = 0,1,2,\ldots$,
the vector generated in the $k$-th iteration of the steepest descent
algorithm is an optimal solution of 
the M-convex function minimization under the constraint
 $\|x - \mycenter\|_1 = 2k$.
As a byproduct of this result, we obtain new properties
of the steepest descent algorithm for unconstrained M-convex function
minimization.
 In particular, we provide a nontrivial tight bound on the number of 
iterations required by the algorithm,
and show that 
the trajectory of the solutions generated by the algorithm
is a geodesic (i.e, a ``shortest'' path) to the nearest optimal solution
from the initial solution.

 While the problem (MML1) can be solved by a steepest descent algorithm, 
its running time is pseudo-polynomial time.
 To obtain faster algorithms, we present in Section~\ref{sec:poly-MML1} 
two approaches to solve (MML1) in polynomial time.
 For this, we show that by using a minimizer of the M-convex objective function,
the L1-distance constraint in (MML1) can be replaced with
a simple linear constraint;
the two approaches proposed in this section
solve the M-convex function 
minimization under the simple linear constraint instead of the original problem.
 The first approach is to reduce the problem
to the minimization of the sum of two M-convex functions,
for which polynomial-time algorithms are available.
 The second approach is based on the reduction to 
the minimization of another M-convex function
with smaller number of variables,
and the resulting algorithm is faster than the first approach.

 Finally, in Section~\ref{sec:drp} we apply 
the algorithms for (MML1) presented in Sections \ref{sec:sda-MML1} and 
\ref{sec:poly-MML1} 
to the dock re-allocation problem (DR), which can be regarded as
a special case of (MML1).
 We aim at obtaining fast algorithms
by making use of the special structure of (DR).

 In Section~\ref{sec:sda-drp}, we discuss an application
of the steepest descent algorithm in Section~\ref{sec:sda-MML1} to (DR).
 A naive application of the algorithm takes
${\Oh}(n^3 \log (B /n))$ time in each iteration
since it requires ${\Oh}(n \log (B /n))$ time for the 
evaluation of the M-convex function $f$ used in 
the reformulation of (DR).
 To reduce the time complexity, 
we present a useful property of the M-convex function $f$ 
 that the update of function value $f(x)$ can be done
quickly in $\Oh(\log n)$ time if the vector $x$ is updated 
to a vector in a neighborhood. 
 Furthermore, we make full use of this property
to implement the steepest descent algorithm
so that the algorithm works for the original formulation 
and each iteration requires $\Oh(\log n)$ time only.
 We also discuss the connection with
 the steepest descent algorithm  in \cite{FHS2017}.

 Section~\ref{sec:poly-drp} is devoted to polynomial-time algorithms
for (DR).
  While the polynomial-time solvability of (DR) follows 
from the results in Section~\ref{sec:poly-MML1},
a naive application of an algorithm in Section~\ref{sec:poly-MML1}  leads to
a polynomial-time but rather slow algorithm  for (DR);
a faster implementation is difficult this time
since the algorithms in Section~4 are more involved.
 Instead, we use an idea in Section~\ref{sec:poly-MML1}  and the structure of (DR)
to obtain a faster polynomial-time algorithm.
 For this, we replace the L1-distance constraint in (DR)
with a simple linear constraint, as in Section~\ref{sec:poly-MML1}.
 This new formulation, together with the use of  a new problem parameter,
makes it possible to decompose the problem (DR) 
into two independent subproblems, both of which can be
reduced to M-convex function minimization 
and therefore can be solved efficiently.
 We show that an algorithm based on this approach runs in 
$\Oh(n \log n \log ((D+B)/n) \log B)$ time.
 To obtain this time bound, we prove a proximity theorem
for a relaxation of the problem (DR) and devise a
proximity-scaling algorithm for the relaxation; 
the proximity theorem and the algorithm are of interest in their own right.

Most of proofs are provided in Appendix.


\section{Preliminaries on M-convexity}


 Throughout the paper, 
let $n$ be a positive integer with $n \geq 2$ and put $N = \{1, 2, \ldots, n\}$.
 We denote by $\R$
the sets of real numbers, and
by $\Z$ (resp., by $\Z_+$) 
the sets of integers (resp., nonnegative integers);
 $\Z_{++}$ denotes the set of positive integers.

 Let $x = (x(1), x(2), \ldots, x(n)) \in \R^n$ be a vector.
 We denote 
$\suppp(x) = \{i \in N \mid x(i)> 0\}$ and
$\suppm(x) = \{i \in N \mid x(i) < 0\}$.
 For a subset $Y \subseteq N$, we denote $x(Y) = \sum_{i\in Y}x(i)$.
 We define
$\|x\|_1 = \sum_{i\in N} |x(i)|$
and $\|x\|_\infty = \max_{i\in N} |x(i)|$.

 We define
$\0 = (0, 0, \ldots, 0) \in \Z^n$.
 For $Y \subseteq N$, we denote by 
$\chi_Y \in\{0, 1\}^n$ the characteristic vector of $Y$,
i.e., $\chi_Y(i) = 1$ if $i \in Y$ and 
$\chi_Y(i) = 0$ otherwise.
 In particular, we denote $\chi_i = \chi_{\{i\}}$
for every $i \in N$.
 We also denote $\chi_0 = \0$.
 Inequality $x \leq y$
for vectors $x, y \in \R^n$ means component-wise inequality
$x(i) \leq y(i)$ for all $i \in N$.

\subsection{M-convex and Multimodular Functions}

 Let $f: \Z^n \to \Rinf$ be a function.
 The {\it effective domain} of $f$
is defined by $\dom f= \{x \in \Z^n \mid f(x) < + \infty\}$,
and the set of minimizers of $f$ is denoted by
$\arg\min f$.
 Function $f$
is said to be \textit{\Mnat-convex} 
if it satisfies the following exchange property:
\begin{quote}
{\bf (\Mnat-EXC)}
$\forall x, y \in \dom f$, $\forall i \in \suppp(x-y)$,
$\exists j \in \suppm(x-y)\cup\{0\}:$
\[
 f(x)  + f(y) \ge f(x - \chi_i + \chi_j) + f(y + \chi_i - \chi_j).
\]
\end{quote}
 For an \Mnat-convex function  $f$,
if  $\dom f$ is contained in a hyperplane
$\{x \in \Z^n \mid x(N) = \theta\}$ for some $\theta \in \Z$,
then $f$ is called an \textit{M-convex function}, in particular.
 It is known that a function $f$ is M-convex if and only if
it satisfies the following exchange property:
\begin{quote}
{\bf (M-EXC)}
$\forall x, y \in \dom f$, $\forall i \in \suppp(x-y)$,
$\exists j \in \suppm(x-y):$
\[
 f(x)  + f(y) \ge f(x - \chi_i + \chi_j) + f(y + \chi_i - \chi_j).
\]
\end{quote}

M-/\Mnat-convex functions can be characterized by
seemingly weaker exchange properties.

\begin{theorem}[{\cite[Theorem~6.4]{Murota03book}, \cite{MS99}}]
\label{thm:M-char-exists}
 Let  $f: \Z^n \to \Rinf$ be a function.\\
{\rm (i)}
$f$ is M-convex if and only if
it satisfies the following condition:\\
\hspace*{5mm}
 $\forall x, y \in \dom f$ with $x \ne y$,
$\exists i \in \suppp(x-y)$,  $\exists j \in \suppm(x-y):$
\[
 f(x)  + f(y) \ge f(x - \chi_i + \chi_j) + f(y + \chi_i - \chi_j).
\]
{\rm (ii)}
$f$ is \Mnat-convex if and only if
it satisfies the following condition:
\\
\hspace*{5mm}
 $\forall x, y \in \dom f$ with $x \ne y$ and $x(N) \ge y(N)$,
$\exists i \in \suppp(x-y)$,  $\exists j \in \suppm(x-y)\cup \{0\}:$
\[
 f(x)  + f(y) \ge f(x - \chi_i + \chi_j) + f(y + \chi_i - \chi_j).
\]
\end{theorem}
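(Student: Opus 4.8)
The forward (``only if'') direction is immediate. If $f$ satisfies (M-EXC), then for $x,y\in\dom f$ with $x\ne y$ the sets $\suppp(x-y)$ and $\suppm(x-y)$ are both nonempty (since $x(N)=y(N)$), so choosing any $i\in\suppp(x-y)$ and the $j\in\suppm(x-y)$ supplied by (M-EXC) already yields the weaker condition in (i); part (ii) is analogous using (\Mnat-EXC). The whole content is therefore the converse. Since the excerpt already records that M-convexity is equivalent to (M-EXC), it suffices to derive the strong exchange property (M-EXC) from the seemingly weaker ``there exist $i$ and $j$'' property. This is a valuated analogue of Brualdi's symmetric exchange theorem for matroid bases, and it is here that the real work lies.

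My plan for (i) is induction on the even integer $\ell=\|x-y\|_1$, after two preliminary observations. First, iterating the hypothesis, each guaranteed exchange decreases $\|x-y\|_1$ by exactly $2$ while leaving $x(N)-y(N)$ unchanged; were $x(N)>y(N)$ for some $x,y\in\dom f$, repeated exchange would eventually reach a point $x'$ with $\suppm(x'-y)=\emptyset$, for which no valid $j$ exists, a contradiction. Hence $\dom f$ lies in a hyperplane, which is the part of M-convexity not expressed by the exchange inequality itself. Second, the base case $\ell=2$ is trivial, because then $x-y=\chi_i-\chi_j$ with $\suppp(x-y)=\{i\}$ and $\suppm(x-y)=\{j\}$ singletons, so the two formulations literally coincide.

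For the inductive step fix $x,y\in\dom f$ with $\|x-y\|_1=\ell\ge 4$ and a target index $i^{*}\in\suppp(x-y)$. The hypothesis yields $p\in\suppp(x-y)$ and $q\in\suppm(x-y)$ with $f(x)+f(y)\ge f(x-\chi_p+\chi_q)+f(y+\chi_p-\chi_q)$; if $p=i^{*}$ we are done, so assume $p\ne i^{*}$ and put $x_1=x-\chi_p+\chi_q$, which lies in $\dom f$ since the right-hand side is finite. Because $\|x_1-y\|_1=\ell-2$ and $i^{*}\in\suppp(x_1-y)$, the induction hypothesis provides some $j\in\suppm(x_1-y)\subseteq\suppm(x-y)$ with
\[
 f(x_1)+f(y)\ge f(x_1-\chi_{i^{*}}+\chi_j)+f(y+\chi_{i^{*}}-\chi_j).
\]
The $y$-side term is already the desired one; the only obstruction is the residual shift on the $x$-side, since $x_1-\chi_{i^{*}}+\chi_j=x-\chi_{i^{*}}+\chi_j-\chi_p+\chi_q$. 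If one can establish the auxiliary inequality
\[
 f(x)+f(x_1-\chi_{i^{*}}+\chi_j)\ge f(x-\chi_{i^{*}}+\chi_j)+f(x_1),
\]
then adding it to the previous inequality and cancelling the common terms $f(x_1)$ and $f(x_1-\chi_{i^{*}}+\chi_j)$ gives exactly $f(x)+f(y)\ge f(x-\chi_{i^{*}}+\chi_j)+f(y+\chi_{i^{*}}-\chi_j)$, which completes the step.

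The auxiliary inequality is where I expect the main difficulty. It is itself an exchange inequality for the pair $(x,\,x-\chi_{i^{*}}+\chi_j-\chi_p+\chi_q)$, whose $\ell_1$-distance is at most $4$; when $\ell>4$ the induction hypothesis is available, and degenerate coincidences among $p,q,i^{*},j$ (for instance $q=j$) collapse it to a single genuine exchange. The awkward point is that the exchange the induction returns may pair $i^{*}$ with $q$ rather than with the required $j$, and the low-distance cases (notably $\ell=4$, where the auxiliary pair can again have distance $4$) are not covered. Resolving these is the crux: I would either impose a secondary minimality on a least counterexample (minimizing a suitable potential in addition to $\ell$) to exclude the wrong pairing, or strengthen the induction hypothesis to carry the auxiliary configurations along simultaneously. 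Finally, I would not treat part (ii) from scratch: appending a coordinate $x_0=-x(N)$ and setting $\tilde f(x_0,x)=f(x)$ on the hyperplane $x_0+x(N)=0$ (and $+\infty$ off it) converts \Mnat-convexity into M-convexity on $\Z^{1+n}$, under which the choice $j=0$ corresponds to exchanging with the new coordinate and the hypothesis $x(N)\ge y(N)$ to having $0$ in the negative support of the lifted difference. Part (ii) then follows from part (i) applied to $\tilde f$, modulo routine checking of the boundary behaviour of the extra coordinate.
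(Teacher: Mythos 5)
You should first be aware that the paper contains no proof of this statement: Theorem~\ref{thm:M-char-exists} is imported as known background from \cite[Theorem~6.4]{Murota03book} and \cite{MS99}, so your attempt can only be measured against the literature proof, and that proof is nontrivial in exactly the place where your argument stops. The parts you do carry out are correct: the ``only if'' direction; the observation that the weak axiom forces $\dom f$ into a hyperplane (keeping $y$ fixed, each exchange preserves $x(N)$ and decreases $\|x-y\|_1$ by exactly $2$, so $x(N)>y(N)$ would eventually produce a distinct pair with $\suppm(x'-y)=\emptyset$, for which the axiom is violated); the base case $\ell=2$; and the bookkeeping showing that the inductive step follows once the auxiliary inequality $f(x)+f(z)\ge f(x-\chi_{i^*}+\chi_j)+f(x_1)$, with $z=x_1-\chi_{i^*}+\chi_j$, is available.

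The genuine gap is that this auxiliary inequality is not a technical loose end --- it is the theorem again. It is an instance of (M-EXC) for the pair $(x,z)$, at L1-distance $4$, with the partner of $i^*$ \emph{prescribed} to be $j$. What you have is strictly weaker: for $\ell>4$ the induction hypothesis only yields an exchange of $i^*$ with \emph{some} element of $\suppm(x-z)=\{q,j\}$, and if it returns $q$ you obtain $f(x)+f(z)\ge f(x-\chi_{i^*}+\chi_q)+f(x-\chi_p+\chi_j)$; note that establishing the $(i^*,q)$ exchange for the original pair $(x,y)$ would also finish the step (any partner in $\suppm(x-y)$ is acceptable), but converting this inequality into that one requires yet another four-point inequality with a prescribed partner, so the regress does not close. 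For $\ell=4$ the induction hypothesis is not applicable to $(x,z)$ at all. Thus the plan reduces the theorem to precisely its own hardest special case, and the two remedies you name (secondary minimality on a least counterexample, strengthened induction hypothesis) are exactly where a proof would have to live; neither is carried out, and neither is routine --- this local-to-global step is the entire content of Murota's Theorem~6.4. Separately, your reduction of (ii) to (i) by appending $x_0=-x(N)$ has its own hole, not ``routine boundary checking'': when $x(N)=y(N)$, condition (ii) permits the supplied exchange to use $j=0$, but then the coordinate $0$ lies in neither support of the lifted difference, so that exchange is not a legal weak exchange for $\tilde f$; hence weak-(ii) for $f$ does not imply weak-(i) for $\tilde f$, and the equal-sum case requires a separate argument, which is one reason \cite{MS99} treats the {M$^\natural$}-convex case on its own.
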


\Mnat-convexity of a function implies the following
exchange properties.

\begin{theorem}[\cite{MS99}]
\label{thm:Mnat-convex-ineq}
 Let $f: \Z^n \to \Rinf$ be an \Mnat-convex function and 
$x, y \in \dom f$.
\\
{\rm (i)}
If $x(N)\le y(N)$, then 
for every $i \in \suppp(x-y)$ there exists some $j \in \suppm(x-y)$
such that
\[
 f(x)  + f(y) \ge f(x - \chi_i + \chi_j) + f(y + \chi_i - \chi_j).
\]
{\rm (ii)}
If $x(N) < y(N)$, then 
 there exists some $j \in \suppm(x-y)$
such that
\[
 f(x)  + f(y) \ge f(x  + \chi_j) + f(y  - \chi_j).
\]
\end{theorem}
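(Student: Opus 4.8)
The plan is to reduce both claims to the exchange axiom (M-EXC) applied to a genuine M-convex function obtained by lifting $f$ into one higher dimension. Put $N_0 = \{0\} \cup N$ and define $\tilde{f}\colon \Z^{N_0} \to \Rinf$ by setting $\tilde{f}(\tilde{x}) = f(x)$ whenever $\tilde{x} = (-x(N),\, x(1), \dots, x(n))$ for some $x \in \dom f$, and $\tilde{f}(\tilde{x}) = +\infty$ otherwise. By the standard correspondence between \Mnat-convex and M-convex functions (Murota and Shioura \cite{MS99}), this $\tilde{f}$ is M-convex: its effective domain lies in the hyperplane $\{\tilde{x} \mid \tilde{x}(0) + \tilde{x}(N) = 0\}$, and so $\tilde{f}$ satisfies (M-EXC). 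Let $\tilde{x}, \tilde{y}$ denote the lifts of the given $x, y$, and for $k \in N_0$ let $\unit_k \in \Z^{N_0}$ be the $k$-th unit vector, so that for $k \in N$ the vector $\unit_k$ restricts to $\chi_k$ on $N$ and has vanishing auxiliary coordinate. The only arithmetic to track is that coordinate: $(\tilde{x}-\tilde{y})(0) = y(N) - x(N)$, while $(\tilde{x}-\tilde{y})(k) = (x-y)(k)$ for every $k \in N$, so that $\suppp(x-y)$ and $\suppm(x-y)$ sit inside $\suppp(\tilde{x}-\tilde{y})$ and $\suppm(\tilde{x}-\tilde{y})$ respectively.

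For part (i) I would assume $x(N) \le y(N)$, whence $(\tilde{x}-\tilde{y})(0) \ge 0$ and therefore $\suppm(\tilde{x}-\tilde{y}) = \suppm(x-y)$, the auxiliary coordinate being barred from the negative support. Fixing $i \in \suppp(x-y) \subseteq \suppp(\tilde{x}-\tilde{y})$ and applying (M-EXC) to $\tilde{f}$ produces some $j \in \suppm(\tilde{x}-\tilde{y}) = \suppm(x-y)$ --- in particular $j \neq 0$ --- with
\[
\tilde{f}(\tilde{x}) + \tilde{f}(\tilde{y}) \ge \tilde{f}(\tilde{x} - \unit_i + \unit_j) + \tilde{f}(\tilde{y} + \unit_i - \unit_j).
\]
Because $i, j \in N$, the transfer $-\unit_i + \unit_j$ leaves coordinate $0$ untouched and preserves the $N$-sum, so $\tilde{x} - \unit_i + \unit_j$ and $\tilde{y} + \unit_i - \unit_j$ are again the lifts of $x - \chi_i + \chi_j$ and $y + \chi_i - \chi_j$; replacing every $\tilde{f}$-value by the corresponding $f$-value yields the asserted inequality.

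For part (ii) I would assume $x(N) < y(N)$, so now $(\tilde{x}-\tilde{y})(0) > 0$, i.e.\ $0 \in \suppp(\tilde{x}-\tilde{y})$, while still $\suppm(\tilde{x}-\tilde{y}) = \suppm(x-y)$. Applying (M-EXC) to $\tilde{f}$ with the distinguished index $i = 0$ gives some $j \in \suppm(x-y)$ with
\[
\tilde{f}(\tilde{x}) + \tilde{f}(\tilde{y}) \ge \tilde{f}(\tilde{x} - \unit_0 + \unit_j) + \tilde{f}(\tilde{y} + \unit_0 - \unit_j).
\]
Here lowering the auxiliary coordinate of $\tilde{x}$ by one while raising $x(j)$ by one is exactly the lift of $x + \chi_j$ (the $N$-sum grows by one, matching the drop of $\tilde{x}(0)$), and symmetrically $\tilde{y} + \unit_0 - \unit_j$ is the lift of $y - \chi_j$; translating back gives $f(x) + f(y) \ge f(x + \chi_j) + f(y - \chi_j)$, as required.

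The one piece of genuine content is the lifting lemma asserting that $\tilde{f}$ is M-convex; once it is in hand, both parts are bookkeeping on the auxiliary coordinate, the main care being the correct determination of $\suppm(\tilde{x}-\tilde{y})$ and the back-translation of the neighbouring vectors. A derivation working directly from (\Mnat-EXC) is also possible but more delicate: the sticking point is part (i), where one must rule out the scenario in which (\Mnat-EXC) is satisfiable only with the choice $j = 0$. For that reason I would favour the lifting proof and simply invoke the \Mnat-/M-convex correspondence of \cite{MS99}.
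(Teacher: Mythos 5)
The paper itself contains no proof of this theorem: it is imported as background, with the citation to \cite{MS99} serving in place of an argument, and the appendix proves only the paper's new results. So there is nothing internal to match your proof against; what you have written is a reconstruction, and it is a correct one. The lifting $\tilde{f}(\tilde{x}) = f(x)$ for $\tilde{x} = (-x(N), x(1),\dots,x(n))$, the observation that $(\tilde{x}-\tilde{y})(0) = y(N)-x(N)$ controls whether the auxiliary index can enter the positive or negative support, and the back-translation of the exchanged vectors (in particular that $\tilde{x} - \unit_0 + \unit_j$ is precisely the lift of $x + \chi_j$, since the drop in coordinate $0$ matches the growth of the $N$-sum) all check out, for both parts. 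Your one load-bearing step --- that (\Mnat-EXC) for $f$ implies (M-EXC) for the lifted $\tilde{f}$ --- is invoked rather than proved, but this is legitimate: it is the fundamental \Mnat-/M-correspondence of \cite{MS99} (see also \cite[Section~6.1]{Murota03book}), it is logically prior to the exchange inequalities being proved here (in \cite{MS99} \Mnat-convex functions are in effect defined through this lifting, and inequalities of the present kind are then derived from (M-EXC)), so there is no circularity; indeed your route is essentially the one the original source takes, which makes your proof a faithful reconstruction of the literature proof rather than a novel alternative. Your closing remark is also on target: a direct derivation from (\Mnat-EXC) stalls in part (i) exactly at ruling out that the exchange axiom is satisfiable only with $j=0$, and the lifting is the clean way around this.
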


 We then explain the concept of multimodularity
and its connection with \Mnat-convexity.
 A function $\phi: \Z^2_+ \to \R$ in two variables
 is called \textit{multimodular} if 
it satisfies the following conditions:
\begin{align*}
&
 \phi(\eta+1, \zeta+1) - \phi(\eta+1, \zeta)
 \ge  \phi(\eta, \zeta+1) - \phi(\eta, \zeta)
\qquad (\forall \eta, \zeta \in \Z_+),
\\
& 
\phi(\eta-1, \zeta+1) - \phi(\eta-1, \zeta)  \ge  \phi(\eta, \zeta) - \phi(\eta, \zeta-1)
\qquad
(\forall  \eta, \zeta \in \Z_{++}),
\\
& 
\phi(\eta+1, \zeta-1) - \phi(\eta, \zeta-1)  \ge  \phi(\eta, \zeta) - \phi(\eta-1, \zeta)
\qquad
(\forall  \eta, \zeta \in \Z_{++}).
\end{align*}
\noindent
 For functions  in two variables,
multimodularity and \Mnat-convexity are
essentially equivalent.

\begin{proposition}[{cf.~\cite{MoriMuro2018}}]
\label{prop:f2-Mnat-multi}
 A function $\phi: \Z^2_+ \to \R$ in two variables
 is multimodular if and only if 
the function $f: \Z^2 \to \Rinf$  
given by 
\begin{equation}
\label{eqn:f2-Mnat} 
\dom f = \Z^2_+, \qquad 
f(\alpha,\beta) = \phi(\alpha,\beta) \quad ((\alpha,\beta) \in \dom f)
\end{equation}
is \Mnat-convex.
\qed
\end{proposition}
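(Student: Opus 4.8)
The effective domain $\dom f = \Z^2_+$ is a box, hence an \Mnat-convex set, so this causes no difficulty; the content of the statement lies entirely in matching the three defining inequalities of multimodularity with the exchange axiom. The plan is to prove necessity by reading each inequality off a single instance of the exchange property (\Mnat-EXC), and to prove sufficiency by verifying the weaker exchange condition of Theorem~\ref{thm:M-char-exists}(ii) through a telescoping argument.

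For necessity, suppose $f$ is \Mnat-convex. I would obtain the first inequality by applying (\Mnat-EXC) to $x = (\eta{+}1,\zeta{+}1)$, $y = (\eta,\zeta)$ with $i=1$: here $\suppm(x-y)=\emptyset$, so $j=0$ is forced and the resulting inequality is exactly the first condition. For the remaining two I would take $x=(\eta{-}1,\zeta{+}1)$, $y=(\eta,\zeta{-}1)$ with $i=2$, and $x=(\eta{+}1,\zeta{-}1)$, $y=(\eta{-}1,\zeta)$ with $i=1$, respectively. The point to check -- and what makes these work despite the freedom in choosing $j$ -- is that in each of these two cases the two admissible indices $j\in\suppm(x-y)\cup\{0\}$ produce the \emph{same} pair of exchanged points, so (\Mnat-EXC) forces precisely the desired inequality. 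The ranges $\Z_+$ versus $\Z_{++}$ in the definition are exactly those for which all four referenced points lie in $\Z^2_+=\dom f$.

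For sufficiency, assume the three inequalities. I would first record two consequences obtained by adding pairs of them so that the cross terms cancel: adding the first and third gives $\chi_1$-convexity $f(z+\chi_1)+f(z-\chi_1)\ge 2f(z)$, and adding the first and second gives $\chi_2$-convexity (with a little care at the boundary points where $z(2)=0$, where a different pair of instances must be summed). The key reformulation is that the first inequality says the forward difference $f(z+\chi_1)-f(z)$ is nondecreasing in the second coordinate, while the third and second say that the diagonal difference $f(z+\chi_1-\chi_2)-f(z)$ is nondecreasing along $+\chi_1$ and along $-\chi_2$, respectively. To verify Theorem~\ref{thm:M-char-exists}(ii), take $x\ne y$ in $\Z^2_+$ with $x(N)\ge y(N)$. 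If $x\ge y$, I choose $i$ with $x(i)>y(i)$ and $j=0$ and obtain $f(x)+f(y)\ge f(x-\chi_i)+f(y+\chi_i)$ by telescoping the monotonicity of the $\chi_i$-difference ($\chi_i$-convexity along the $i$-th coordinate, the first inequality along the other) over a monotone lattice path from $y$ to $x-\chi_i$. If $x-y$ has mixed signs, say $x(1)>y(1)$ and $x(2)<y(2)$, then the displacement $x-\chi_1+\chi_2-y$ has nonnegative first and nonpositive second coordinate, so the path from $y$ to $x-\chi_1+\chi_2$ uses only $+\chi_1$ and $-\chi_2$ steps; telescoping the monotonicity of the diagonal difference (the third inequality for the $+\chi_1$ steps, the second for the $-\chi_2$ steps) yields $f(x)+f(y)\ge f(x-\chi_1+\chi_2)+f(y+\chi_1-\chi_2)$, the required exchange.

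The main obstacle is the bookkeeping in the sufficiency direction: one must fix the order of the steps in each lattice path (for instance, all $+\chi_1$ steps before the $-\chi_2$ steps) so that every intermediate point, and every point to which an instance of multimodularity is applied, stays within $\Z^2_+$ and within the valid index ranges -- this is where the boundary of the domain interacts with the $\Z_{++}$ restrictions in the definition. Once the paths are chosen correctly the inequalities chain together immediately, so the difficulty is entirely in organizing the telescoping rather than in any single estimate.
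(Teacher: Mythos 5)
Your proof is correct, but it cannot be compared to a proof in the paper in the usual sense: the paper does not prove this proposition at all---it is imported from Moriguchi--Murota \cite{MoriMuro2018}, with the citation standing in lieu of a proof. Your argument is therefore a genuinely self-contained alternative, and it checks out in both directions. For necessity, the three instantiations of (\Mnat-EXC) are exactly right, and the point you flag is indeed what makes them airtight: for the second and third multimodularity inequalities the two admissible indices $j \in \suppm(x-y)\cup\{0\}$ yield the same pair of exchanged points (e.g.\ for $x=(\eta{-}1,\zeta{+}1)$, $y=(\eta,\zeta{-}1)$, $i=2$, both $j=1$ and $j=0$ produce the points $(\eta,\zeta)$ and $(\eta{-}1,\zeta)$), so the axiom forces precisely the desired inequality with no case analysis, and the $\Z_+$ versus $\Z_{++}$ ranges match the domain constraints as you state. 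For sufficiency, reducing to Theorem~\ref{thm:M-char-exists}(ii) is the right move, and the two telescoping facts are correct: inequality~1 combined with a shifted inequality~3 (resp.~2) gives convexity along $\chi_1$ (resp.~$\chi_2$), which together with inequality~1 handles the case $x \ge y$ with $j=0$; inequalities~3 and~2 say the diagonal difference $f(z+\chi_1-\chi_2)-f(z)$ is nondecreasing along $+\chi_1$ and along $-\chi_2$, which handles the mixed-sign case; and in both cases every intermediate point of the lattice path lies componentwise between the endpoints, hence in $\Z^2_+$, while the degenerate paths ($x = y+\chi_i$, or $x-\chi_1+\chi_2 = y$) make the exchange inequality trivial. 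One small remark: the ``care at the boundary $z(2)=0$'' you anticipate for $\chi_2$-convexity is not actually needed, since the relevant instance of inequality~2 taken at $(\eta{+}1,\zeta{+}1)$ is valid for all $\eta,\zeta \in \Z_+$; the over-caution costs nothing. What your route buys is independence from the preprint \cite{MoriMuro2018}; what the citation buys the paper is brevity and a pointer to the general $n$-variable theory relating multimodularity to \Mnat-convexity.
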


 This relationship  and  Theorem \ref{thm:Mnat-convex-ineq}
immediately imply the following property of multimodular functions.

\begin{proposition}
\label{prop:2multimod}
 Let $\phi: \Z^2_+ \to \R$ be a multimodular function, and
 $\eta, \zeta, \eta', \zeta' \in \Z_+$.\\
{\rm (i)}
If $\eta> \eta'$ and $\zeta < \zeta'$, then it holds that
\begin{equation}
\label{eqn:2multimod-1}
 \phi(\eta, \zeta)+\phi(\eta', \zeta') 
\ge \phi(\eta-1, \zeta+1)+\phi(\eta'+1, \zeta'-1). 
\end{equation}
{\rm (ii)}
If $\eta> \eta'$ and $\eta+\zeta > \eta'+\zeta'$, then it holds that
\begin{equation}
\label{eqn:2multimod-2}
\phi(\eta, \zeta)+\phi(\eta', \zeta') 
\ge \phi(\eta-1, \zeta)+\phi(\eta'+1, \zeta').
\end{equation}
\end{proposition}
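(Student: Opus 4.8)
The plan is to transfer both inequalities to the \Mnat-convex function $f\colon\Z^2\to\Rinf$ associated with $\phi$ through Proposition~\ref{prop:f2-Mnat-multi}, i.e.\ the function with $\dom f=\Z^2_+$ and $f=\phi$ on $\Z^2_+$, and then read the claimed inequalities off Theorem~\ref{thm:Mnat-convex-ineq}. The key simplification afforded by the two-variable setting is that $N=\{1,2\}$, so for any two points $x,y$ each of the sets $\suppp(x-y)$ and $\suppm(x-y)$ is either empty or a singleton; consequently, whenever Theorem~\ref{thm:Mnat-convex-ineq} merely \emph{asserts the existence} of an exchange index $j$ (or a pair $i,j$), that index is in fact forced, and the abstract exchange inequality becomes the concrete one we want. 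A preliminary bookkeeping step is to check that, under each hypothesis, all four points occurring in the claimed inequality lie in $\dom f=\Z^2_+$, so that $f$ may be replaced by $\phi$ throughout; this is immediate from $\eta\ge 1$ (since $\eta>\eta'\ge 0$) together with $\zeta'\ge 1$ in part~(i) (since $\zeta'>\zeta\ge 0$).

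For part~(i), I would set $p=(\eta,\zeta)$ and $q=(\eta',\zeta')$, so that $p-q$ has a positive first coordinate and a negative second coordinate. I designate as the vector ``$x$'' of Theorem~\ref{thm:Mnat-convex-ineq}(i) whichever of $p,q$ has the smaller coordinate sum, so that the hypothesis $x(N)\le y(N)$ holds. If $x=p$ then $\suppp(x-y)=\{1\}$ and $\suppm(x-y)=\{2\}$, forcing $i=1,\ j=2$; if instead $x=q$ then $\suppp(x-y)=\{2\}$ and $\suppm(x-y)=\{1\}$, forcing $i=2,\ j=1$. In either case the exchange inequality of Theorem~\ref{thm:Mnat-convex-ineq}(i) reads $f(p)+f(q)\ge f(p-\chi_1+\chi_2)+f(q+\chi_1-\chi_2)$, which is exactly \eqref{eqn:2multimod-1}.

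For part~(ii), I want the exchange to alter only coordinate $1$. Taking $x=(\eta',\zeta')$ and $y=(\eta,\zeta)$, the hypothesis $\eta+\zeta>\eta'+\zeta'$ gives $x(N)<y(N)$, so Theorem~\ref{thm:Mnat-convex-ineq}(ii) applies and produces some $j\in\suppm(x-y)$ with $f(x)+f(y)\ge f(x+\chi_j)+f(y-\chi_j)$. The first coordinate of $x-y$ is $\eta'-\eta<0$, so $1\in\suppm(x-y)$, while the second coordinate is $\zeta'-\zeta$. When $\zeta\le\zeta'$ this is nonnegative, hence $\suppm(x-y)=\{1\}$, the index $j=1$ is forced, and the resulting inequality is precisely \eqref{eqn:2multimod-2}.

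The main obstacle is the remaining case $\zeta>\zeta'$ of part~(ii): then $2$ also lies in $\suppm(x-y)$, and Theorem~\ref{thm:Mnat-convex-ineq}(ii) no longer pins the exchange to coordinate $1$. To handle it I would instead invoke the defining exchange axiom {\bf(\Mnat-EXC)} with $x=(\eta,\zeta)$ and $y=(\eta',\zeta')$: now both coordinates of $x-y$ are positive, so $\suppm(x-y)=\emptyset$, and choosing $i=1\in\suppp(x-y)$ forces $j=0$. Since $\chi_0=\0$, the axiom gives $f(x)+f(y)\ge f(x-\chi_1)+f(y+\chi_1)$, which is again \eqref{eqn:2multimod-2}. (This borderline case is in fact exactly the first defining inequality of multimodularity.) With these role assignments every existential exchange statement collapses to a forced index, so beyond the sign bookkeeping above no genuine case-chasing is needed.
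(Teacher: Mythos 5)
Your proof is correct and follows the same overall strategy as the paper: pass to the \Mnat-convex function $f$ of Proposition~\ref{prop:f2-Mnat-multi}, invoke Theorem~\ref{thm:Mnat-convex-ineq} and (\Mnat-EXC), and exploit the fact that for $n=2$ each of $\suppp(x-y)$, $\suppm(x-y)$ is empty or a singleton, so every existential exchange index is forced. Your part (i) coincides with the paper's argument. In part (ii), however, you pair the sub-cases with the two tools in the \emph{opposite} way from the paper, and your pairing is the one under which the forcing actually happens. The paper invokes Theorem~\ref{thm:Mnat-convex-ineq}~(ii) when $\zeta \ge \zeta'$ and (\Mnat-EXC) when $\zeta < \zeta'$; but with $x=(\eta',\zeta')$, $y=(\eta,\zeta)$ one has $\suppm(x-y)=\{1,2\}$ whenever $\zeta>\zeta'$, so Theorem~\ref{thm:Mnat-convex-ineq}~(ii) could return $j=2$ (the wrong exchange), and with $x=(\eta,\zeta)$, $y=(\eta',\zeta')$, $i=1$, (\Mnat-EXC) allows $j=2$ whenever $\zeta<\zeta'$, which yields the inequality of part (i) rather than \eqref{eqn:2multimod-2}. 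Your assignment --- Theorem~\ref{thm:Mnat-convex-ineq}~(ii) for $\zeta\le\zeta'$, where $\suppm(x-y)=\{1\}$ forces $j=1$, and (\Mnat-EXC) for $\zeta>\zeta'$, where $\suppm(x-y)=\emptyset$ forces $j=0$ --- makes each existential conclusion unambiguous, so your write-up in effect repairs a case-swap slip in the paper's own proof of (ii); apart from this, and your (harmless, automatic) check that all four points lie in $\dom f=\Z^2_+$, the two arguments are the same.
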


\begin{proof}
By Proposition \ref{prop:f2-Mnat-multi},
 $\phi$ can be seen as an \Mnat-convex function.
 We first prove the claim (i).
Theorem \ref{thm:Mnat-convex-ineq} (i) implies that
if $\eta+\zeta \le \eta'+\zeta'$
then
$\phi(\eta, \zeta)+\phi(\eta', \zeta') 
\ge \phi(\eta-1, \zeta+1)+\phi(\eta'+1, \zeta'-1)$,
and  
if $\eta'+\zeta' \le \eta+\zeta$
then
$\phi(\eta', \zeta') + \phi(\eta, \zeta)
\ge \phi(\eta'+1, \zeta'-1) + \phi(\eta-1, \zeta+1)$.
 In either case, 
the inequality \eqref{eqn:2multimod-1} holds.

 We then prove the claim (ii).
 If  $\zeta \ge \zeta'$, then
the inequality \eqref{eqn:2multimod-2} follows immediately from
Theorem \ref{thm:Mnat-convex-ineq} (ii).
If $\eta+\zeta > \eta'+\zeta'$ and $\zeta < \zeta'$, then
the inequality \eqref{eqn:2multimod-2} follows immediately from
(\Mnat-EXC).
\end{proof}

\subsection{Minimization of an M-convex Function}
\label{sec:Mmin}

 We consider the minimization of an M-convex function.
 A minimizer of an M-convex function can be characterized by 
a local optimality condition.

\begin{theorem}[{cf.~\cite[Theorem~6.26]{Murota03book}}]
 For an M-convex function $f: \Z^n \to \Rinf$,
a vector $x^* \in \dom f$ 
is a minimizer of $f$ if and only if
$f(x^* - \chi_i + \chi_j) \ge f(x^*)$ $(\forall i, j \in N)$.
\end{theorem}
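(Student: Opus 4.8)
The plan is to prove the nontrivial ``if'' direction by contradiction, using the exchange axiom (M-EXC) together with the $L_1$-distance as a descent measure; the ``only if'' direction is immediate, since a global minimizer trivially satisfies the local inequality for every choice of $i,j$.

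First I would assume that $x^*$ satisfies the local condition $f(x^* - \chi_i + \chi_j) \ge f(x^*)$ for all $i,j \in N$, yet is not a global minimizer, so that some $y \in \dom f$ has $f(y) < f(x^*)$. Among all such violating vectors I would select one, still denoted $y$, for which $\|x^* - y\|_1$ is as small as possible. Since $f(y) < f(x^*)$ forces $y \ne x^*$, and since both $x^*$ and $y$ lie on the hyperplane $\{x \mid x(N) = \theta\}$ containing $\dom f$, the difference $x^* - y$ has zero coordinate sum; hence both $\suppp(x^* - y)$ and $\suppm(x^* - y)$ are nonempty.

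Next I would apply (M-EXC) to the pair $x^*, y$: fixing any $i \in \suppp(x^* - y)$, the axiom produces some $j \in \suppm(x^* - y)$ with
\[
 f(x^*) + f(y) \ge f(x^* - \chi_i + \chi_j) + f(y + \chi_i - \chi_j).
\]
By the local optimality hypothesis, $f(x^* - \chi_i + \chi_j) \ge f(x^*)$, and cancelling $f(x^*)$ yields $f(y + \chi_i - \chi_j) \le f(y) < f(x^*)$; thus $y' := y + \chi_i - \chi_j$ is again a violating vector. Finally I would compare distances: since $(x^* - y)(i) > 0$ and $(x^* - y)(j) < 0$, passing from $x^* - y$ to $x^* - y' = (x^* - y) - \chi_i + \chi_j$ strictly decreases the absolute value at coordinates $i$ and $j$ by one each, leaving all other coordinates unchanged, so $\|x^* - y'\|_1 = \|x^* - y\|_1 - 2$. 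This contradicts the minimality of $\|x^* - y\|_1$ in the choice of $y$, and the proof is complete.

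The main obstacle, and the only genuinely clever point, is choosing the $L_1$-distance-minimal counterexample and verifying that a single exchange step strictly decreases this distance while remaining within the set of vectors cheaper than $x^*$; once this measure is fixed, everything reduces to a direct application of (M-EXC) and the hypothesis. One could equally invoke the weaker characterization in Theorem~\ref{thm:M-char-exists}(i), but I would prefer (M-EXC) here because it allows $i$ to be fixed in advance, which streamlines the distance computation.
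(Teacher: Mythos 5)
Your proof is correct. The paper itself does not prove this statement---it is quoted with a citation to Murota's book (Theorem~6.26)---so there is nothing internal to compare against; your argument (pick a counterexample $y$ with $f(y)<f(x^*)$ minimizing $\|x^*-y\|_1$, apply (M-EXC) with $i\in\suppp(x^*-y)$ fixed, use local optimality to cancel $f(x^*)$, and observe that the exchange step shortens the distance by $2$) is exactly the classical descent/induction-on-$\|x^*-y\|_1$ proof of this local-to-global criterion, with all the needed details (nonemptiness of both supports via $x^*(N)=y(N)$, finiteness allowing cancellation, and the distance drop) handled correctly.
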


 This theorem immediately implies that
 the  minimization of an 
M-convex function
can be solved by the following steepest descent algorithm
(see, e.g., \cite[Section~10.1.1]{Murota03book}):

\begin{flushleft}
 \textbf{Algorithm} {\sc SteepestDescent} 
\\
 \textbf{Step 0:} 
 Let $x_0 \in \dom f$ be an appropriately chosen initial vector.
 Set $k:=1$.
\\
 \textbf{Step 1:} 
If 
$f(x_{k-1} + \chi_{i} - \chi_{j}) \ge f(x_{k-1})$
for every $i, j \in N$, 
then output $x_{k-1}$ and stop.\\
 \textbf{Step 2:} 
 Find $i_k, j_k \in N$ that minimizes $f(x_{k-1} + \chi_{i_k} - \chi_{j_k})$.\\
 \textbf{Step 3:} 
 Set $x_{k} := x_{k-1} + \chi_{i_k} - \chi_{j_k}$, 
$k:=k+1$, and go to Step 1.
\end{flushleft}

\begin{theorem}[{cf.~\cite[Section~10.1.1]{Murota03book}}]
\label{thm:}
 Let $f: \Z^n \to \Rinf$ be an M-convex function $f: \Z^n \to \Rinf$ 
with bounded $\dom f$.
 Then, the algorithm {\sc SteepestDescent} 
outputs a minimizer of $f$ after a finite number of iterations.
\end{theorem}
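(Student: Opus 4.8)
The plan is to separate the argument into two independent parts: termination (the algorithm halts after finitely many iterations) and correctness (the output is a global minimizer). The correctness part will follow immediately from the local optimality characterization stated just above, so the only real work lies in establishing termination, which rests on the finiteness of $\dom f$ together with a strict-descent property of each iteration.

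First I would observe that since $\dom f$ is bounded and contained in the integer lattice $\Z^n$, it is a finite set. Next, I would verify that each executed iteration strictly decreases the objective value while keeping the current vector in $\dom f$. Indeed, whenever the algorithm does not stop at Step 1, there exist $i, j \in N$ with $f(x_{k-1} + \chi_i - \chi_j) < f(x_{k-1})$; since Step 2 selects a pair $(i_k, j_k)$ minimizing $f(x_{k-1} + \chi_{i_k} - \chi_{j_k})$ over all $i, j \in N$, the updated vector satisfies $f(x_k) \le f(x_{k-1} + \chi_i - \chi_j) < f(x_{k-1})$. In particular $f(x_k) < +\infty$, so $x_k \in \dom f$.

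From these two observations termination follows at once: the values $f(x_0) > f(x_1) > f(x_2) > \cdots$ form a strictly decreasing sequence, so the vectors $x_0, x_1, x_2, \ldots$ are pairwise distinct and all lie in the finite set $\dom f$. Hence the number of iterations is bounded by $|\dom f|$, and the algorithm must stop.

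Finally, for correctness, I would note that the algorithm can only terminate through Step 1, at which point the output vector $x_{k-1}$ satisfies $f(x_{k-1} + \chi_i - \chi_j) \ge f(x_{k-1})$ for every $i, j \in N$. After relabeling $i$ and $j$, this is precisely the local optimality condition of the preceding characterization theorem, which for an M-convex function guarantees that $x_{k-1}$ is a global minimizer of $f$. The one point requiring care — and the only place where M-convexity is genuinely used — is exactly this last step, since it is the cited theorem that upgrades local to global optimality; the termination argument itself is elementary and uses only finiteness and strict descent.
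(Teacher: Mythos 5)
Your proof is correct and follows essentially the same route as the standard argument: the paper itself does not reprove this classical result but cites it from Murota's book, and the cited proof is exactly your two-part scheme --- finiteness of $\dom f$ plus strict descent at every non-terminating iteration yields termination, and the local optimality characterization for M-convex functions (the theorem stated just before the algorithm) upgrades the stopping condition to global optimality. You also correctly handle the two small points of care: the relabeling of $(i,j)$ between the stopping test and the characterization theorem, and the fact that the Step~2 minimum is strictly below $f(x_{k-1})$, so the degenerate choice $i_k = j_k$ cannot occur.
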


  Polynomial-time algorithms based on proximity scaling algorithms are
proposed for M-convex function minimization~\cite{MMS2002,Sh1998,Shioura04,T-scal},
and the current best time complexity bounds are given as follows.
 For a set $S\subseteq \Z^n$, we define the \textit{L$_\infty$-diameter} of $S$ by
\begin{equation}
\label{eqn:def-sizeL}
L = \max\{\|x - y\|_\infty \mid x, y \in S\}. 
\end{equation}

\begin{theorem}[\cite{Shioura04,T-scal}]
\label{thm:M-time}
Minimization of  an M-convex function $f: \Z^n \to \Rinf$
can be done 
in   $\Oh(n\sp{3}\log (L/n)F)$ time, 
where $L$ is the L$_\infty$-diameter of $\dom f$
 and $F$ denotes the time to evaluate the function
value of $f$.
\end{theorem}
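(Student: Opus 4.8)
The plan is to obtain the bound through a proximity-scaling algorithm that refines the naive {\sc SteepestDescent} procedure so that it takes long scaled steps in the early phases. The conceptual backbone is a \emph{proximity theorem}: for a scaling unit $\lambda \in \Z_{++}$, call $x \in \dom f$ a \emph{$\lambda$-local minimizer} if $f(x) \le f(x + \lambda\chi_i - \lambda\chi_j)$ for all $i, j \in N$, and then assert that every $\lambda$-local minimizer lies within $\ell_\infty$-distance $(n-1)\lambda$ of some genuine minimizer of $f$. I would prove this by taking a $\lambda$-local minimizer $x$ and a true minimizer $x^*$ minimizing $\|x - x^*\|_1$, and repeatedly applying the exchange axiom {\bf (M-EXC)} to the pair $(x^*, x)$: each exchange produces a point strictly closer to $x$ without increasing $f(x^*)$, and a pigeonhole argument on the coordinates of $x^* - x$ shows that if $\|x - x^*\|_\infty$ exceeded $(n-1)\lambda$, then some coordinate pair $(i,j)$ would be used at least $\lambda$ times along the exchange sequence, so that batching those moves yields a scaled exchange $x + \lambda\chi_i - \lambda\chi_j$ strictly decreasing $f(x)$, contradicting $\lambda$-local minimality.

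With the proximity theorem in hand, the algorithm runs in $\Oh(\log(L/n))$ scaling phases, starting from $\lambda \approx L$ and halving $\lambda$ after each phase until $\lambda = 1$. Within a phase I would run {\sc SteepestDescent} using only the scaled exchanges $x \mapsto x + \lambda\chi_i - \lambda\chi_j$, terminating at a $\lambda$-local minimizer; at $\lambda = 1$ the local optimality characterization of Section~2 guarantees that the output is a global minimizer of $f$. Each scaled descent step evaluates $f$ on all $\Oh(n^2)$ candidate pairs $(i,j)$, costing $\Oh(n^2 F)$. The decisive counting step is to bound the number of scaled steps per phase: since the $\lambda$-local minimizer produced in the previous phase is, by proximity, within $\ell_\infty$-distance $(n-1)\cdot 2\lambda$ of the current phase's $\lambda$-local target, the scaled steepest descent needs only $\Oh(n)$ steps to reach $\lambda$-local optimality. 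Multiplying $\Oh(\log(L/n))$ phases by $\Oh(n)$ steps by $\Oh(n^2 F)$ per step yields the claimed $\Oh(n^3 \log(L/n) F)$ bound.

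The main obstacle, and the feature that distinguishes this from the separable-convex case, is that M-convexity is \emph{not} preserved under scaling: the function $x \mapsto f(\lambda x)$ need not be M-convex, so one cannot simply recurse on a scaled M-convex surrogate. Consequently both the proximity theorem and the per-phase step count must be argued directly against the original $f$ through its exchange structure. I expect the heart of the difficulty to lie precisely in converting the $\ell_\infty$ proximity estimate into the $\Oh(n)$ per-phase iteration count, since a crude distance-reduction argument (each step shortens the $\ell_1$-distance to the target by only $2\lambda$) gives merely $\Oh(n^2)$ steps and would lose a factor of $n$; a sharper monotonicity argument on how the scaled descent trajectory approaches the target is needed to recover the stated complexity.
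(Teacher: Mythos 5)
First, note that the paper offers no proof of this theorem: it is quoted from \cite{Shioura04,T-scal}, so your proposal has to be measured against those works. Your framework --- a proximity theorem for $\lambda$-local minimizers plus $\Oh(\log(L/n))$ halving phases of scaled steepest descent --- is the standard one, and the proximity theorem you invoke is true (it is the M-proximity theorem of Moriguchi--Murota--Shioura \cite{MMS2002}, with the known bound $\|x-x^*\|_\infty \le (n-1)(\lambda-1)$). But the proof has a genuine gap at exactly the step you flag yourself: the $\Oh(n)$ bound on the number of scaled steps per phase is asserted, not proven, and that bound is precisely the main technical content of the cited papers. Moreover, the gap is deeper than a missing ``sharper monotonicity argument.'' Because the restriction of $f$ to the coarse lattice $\{x+\lambda z \mid z \in \Z^n\}$ is not M-convex (as you correctly note), scaled steepest descent inherits none of the structural properties --- the minimizer-cut property, or the geodesic/distance-decreasing behavior of the kind established in Theorem \ref{thm:main1} of this paper --- on which any such iteration count rests. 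Even your fallback ``crude $\Oh(n^2)$'' count is unjustified as written: the observation that each step changes the $\ell_1$-distance to a target by at most $2\lambda$ bounds nothing unless you also show that this distance is monotone non-increasing along the descent trajectory, and that monotonicity is an M-convexity phenomenon which the scaled moves do not enjoy. So the proposal as it stands establishes neither $\Oh(n^3\log(L/n)F)$ nor even $\Oh(n^4\log(L/n)F)$ without further work.

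The cited works close this gap by changing the algorithm rather than by analyzing uniformly scaled steepest descent more sharply: Shioura \cite{Shioura04} combines steepest descent with domain reduction, explicitly maintaining lower and upper bound vectors for a box guaranteed to contain a minimizer (this is the variant alluded to in the Remark in this paper's appendix), while Tamura \cite{T-scal} scales the domain coordinatewise, precisely so as to retain enough M-convex structure for the per-phase counting to go through. A secondary weakness: your pigeonhole/batching proof sketch of the proximity theorem is also not rigorous --- the unit exchanges produced by {\bf (M-EXC)} along a sequence from $x^*$ toward $x$ are applied at intermediate points, not at $x$, and cannot simply be batched into a single move $\lambda(\chi_i-\chi_j)$ at $x$; note that $\lambda$-local minimality along a direction does not imply $1$-local minimality along it, so the contradiction needs the more careful averaging/induction argument of \cite{MMS2002}. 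Since that theorem is citable, this is a minor point; the essential missing piece is the per-phase iteration bound.
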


 We also consider 
the minimization of an \Mnat-convex function 
$f: \Z^n \to \Rinf$ under the constraint that $x(N) = \theta$
for a given $\theta \in \Z$.
While this problem is essentially equivalent to
 M-convex function minimization, 
it can be solved faster if $\dom f$
is given by an interval.

\begin{theorem}[{cf.~\cite{Shioura04}}]
\label{thm:Mnat-polymat-time}
 Let $f: \Z^n \to \Rinf$ be an \Mnat-convex function
such that $\dom f$ is given by an interval,
and $\theta \in \Z_{++}$.
 Then, the minimization of $f$
under the constraint $x(N)=\theta$
can be  solved   in   $\Oh(n\sp{2}\log (L/n)F)$ time, 
where $L$ is the L$_\infty$-diameter of the set
$\{x \in \dom f \mid x(N)=\theta \}$ and
$F$ denotes the time to evaluate the function value of $f$.
\end{theorem}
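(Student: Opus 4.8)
The plan is to reduce the constrained problem to plain M-convex function minimization and then to run a proximity-scaling algorithm whose per-phase cost is cut down by the hypothesis that $\dom f$ is an interval.

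First I would pass from $f$ to its restriction to the feasible hyperplane. Put $\tilde f(x) = f(x)$ for $x \in \dom f$ with $x(N) = \theta$ and $\tilde f(x) = +\infty$ otherwise, so that $\dom\tilde f = \{x \in \dom f \mid x(N)=\theta\}$ is the slice of the box $\dom f$ by $\{x \mid x(N)=\theta\}$ and minimizing $f$ under $x(N)=\theta$ is exactly minimizing $\tilde f$. I claim $\tilde f$ is M-convex. Indeed, any two distinct $x,y \in \dom\tilde f$ satisfy $x(N)=y(N)=\theta$, so $x(N) \le y(N)$ and Theorem~\ref{thm:Mnat-convex-ineq}~(i) provides, for each $i \in \suppp(x-y)$, an index $j \in \suppm(x-y)$ with $f(x)+f(y) \ge f(x-\chi_i+\chi_j)+f(y+\chi_i-\chi_j)$; since $\chi_i-\chi_j$ has zero coordinate sum, the two points on the right-hand side again lie on $\{x \mid x(N)=\theta\}$, and finiteness of the left-hand side forces them into $\dom f$, hence into $\dom\tilde f$. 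Thus $\tilde f$ satisfies (M-EXC) and is M-convex, and minimizing $\tilde f$ is a genuine M-convex minimization on the base-polytope-type domain $\dom\tilde f$.

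Next I would run the proximity-scaling algorithm of \cite{Shioura04,T-scal} on $\tilde f$. The algorithm keeps a scaling parameter $\alpha$, halved over $\Oh(\log(L/n))$ phases (where $L$ is the $\|\cdot\|_\infty$-diameter of $\dom\tilde f$), together with a current feasible $x$ on the hyperplane; within a phase it descends along scaled exchange directions $\alpha(\chi_i-\chi_j)$, each of which automatically preserves $x(N)=\theta$ because $\chi_i-\chi_j$ sums to zero, so no separate projection is ever needed. The proximity theorem for M-convex functions guarantees that an $\alpha$-local minimizer lies within $\|\cdot\|_\infty$-distance $\Oh(n\alpha)$ of a global minimizer, which bounds the search region entering the next phase and hence the number of descent steps performed in a phase.

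The main obstacle, and the only place where the interval hypothesis on $\dom f$ is really used, is the refined complexity analysis showing that one scaling phase can be executed in $\Oh(n^2 F)$ time, as opposed to the $\Oh(n^3 F)$ per phase underlying the general bound of Theorem~\ref{thm:M-time}. The point I would exploit is that for a box $\dom f=[\ell,u]$ the feasibility of a scaled exchange $x+\alpha(\chi_i-\chi_j)$ collapses to the coordinatewise tests $x(i)+\alpha \le u(i)$ and $x(j)-\alpha \ge \ell(j)$; this lets the candidate increase/decrease coordinates be maintained in sorted order so that locating and updating the best decreasing direction costs only $\Oh(nF)$ amortized per step, while the proximity bound caps a phase at $\Oh(n)$ effective steps, giving $\Oh(n^2F)$ per phase. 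I expect the delicate part to be making this bookkeeping and the ``$\Oh(n)$ steps per phase'' count precise on the sliced domain $\dom\tilde f$ rather than merely plausible; once that is established, multiplying by the $\Oh(\log(L/n))$ phases yields the claimed $\Oh(n^2\log(L/n)F)$ bound.
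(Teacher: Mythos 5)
Your first step is sound: restricting $f$ to the hyperplane $\{x \in \Z^n \mid x(N)=\theta\}$ does yield an M-convex function $\tilde f$, by exactly the argument you give via Theorem~\ref{thm:Mnat-convex-ineq}~(i), and the finiteness point you make is handled correctly. But the heart of the theorem is the factor-$n$ speedup over the general bound of Theorem~\ref{thm:M-time}, and there your argument has a genuine gap: the mechanism you invoke to get a phase down to $\Oh(n^2F)$ presupposes separability of $f$. Once you pass to $\tilde f$ and commit to exchange steps $x+\alpha(\chi_i-\chi_j)$, each steepest-descent step must compare $f(x+\alpha\chi_i-\alpha\chi_j)$ over ordered pairs $(i,j)$; for a non-separable $f$ given only by an evaluation oracle this value depends \emph{jointly} on the pair, so there is no key by which ``increase candidates'' and ``decrease candidates'' can be kept in sorted lists --- that trick works only when $f(x+\alpha\chi_i-\alpha\chi_j)-f(x)$ splits into a term depending on $i$ plus a term depending on $j$. (The theorem is applied in this paper to the function $\hat f$ of \eqref{eqn:def-fhat}, which is not separable.) The interval hypothesis as you use it only makes the \emph{feasibility} test coordinatewise, but feasibility is not the bottleneck; oracle calls are. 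So your route costs $\Theta(n^2F)$ per descent step and delivers at best the $\Oh(n^3\log(L/n)F)$ bound of Theorem~\ref{thm:M-time}, not the claimed $\Oh(n^2\log(L/n)F)$. A secondary gap: M-convexity is not preserved under scaling, so ``the proximity bound caps a phase at $\Oh(n)$ steps'' is not automatic for descent on the coarse lattice; making such a statement true is precisely the technical content of \cite{MMS2002,Shioura04,T-scal}, not bookkeeping.

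Note that the paper does not prove this theorem at all --- it cites it from \cite{Shioura04} --- and the route consistent with the claimed bound is structurally different from yours: one should \emph{not} reduce to the hyperplane. Keep the \Mnat-convex $f$ and run a scaled greedy algorithm in the style of Hochbaum \cite{Hoch94}: single-coordinate increments $x \mapsto x+\alpha\chi_i$ that drive $x(N)$ up to $\theta$. \Mnat-convexity guarantees that a minimizer on $\{x \mid x(N)=k+1\}$ is reachable from a minimizer on $\{x \mid x(N)=k\}$ by a single increment (the analogue of Theorem~\ref{thm:norm-opt-main}~(ii)), the interval domain makes feasibility of $x+\alpha\chi_i$ a coordinatewise test, a proximity theorem bounds each phase by $\Oh(n)$ increments, and each increment needs only the $n$ values $f(x+\alpha\chi_i)$, i.e., $\Oh(nF)$ time. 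This gives $\Oh(n)\times\Oh(nF)=\Oh(n^2F)$ per phase and the stated bound over $\Oh(\log(L/n))$ phases: the search at each step ranges over $n$ single-coordinate moves rather than $n^2$ exchanges, which is exactly the factor your approach cannot recover.
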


\section{Reformulation of Dock Re-allocation Problem as (MML1)}
\label{sec:DR-Mconv}

 We consider the dock re-allocation problem (DR)
explained in Introduction.
 Using vector notation, the problem (DR)
can be simply rewritten as follows:
\[
 \begin{array}{l|lll}
\mbox{(DR)} & \mbox{Minimize }  & 
\displaystyle c(d, b)\\
& \mbox{subject to } & 
\displaystyle  d(N)+b(N) = D+ B,\\
& & 
\displaystyle  b(N) \le  B,\\
& & 
\| (d+b) - (\bar{d} + \bar{b})\|_1
 \le  2\gamma,\\
& & \ell \le d + b \le u,\   d, b \in \Z^n_+,
 \end{array}
\]
where $c: \Z^n_+ \times \Z^n_+ \to \R$ is a function given by
$c(d,b) = \sum_{i=1}^n c_i(d(i), b(i))$ $((d,b) \in \Z^n_+ \times \Z^n_+)$.
 In this section, we show that (DR)
can be reformulated as the problem (MML1).

 We define a function $f: \Z^n \to \Rinf$ by
\begin{align}
& \dom f = \{x \in \Z^n \mid  x(N) = D + B,\ \ell \le x \le u ,\ 
\bar{d} + \bar{b} -\gamma \1 \le   x  \le \bar{d} + \bar{b} + \gamma \1 \},
\notag\\
& f(x) = 
 \min\{ c(d, b) \ \mid
 d, b \in \Z_+^n,\ d + b = x,\ 
   b(N) \le  B \}
\quad (x\in \dom f).
 \label{eqn:def-f}
\end{align}
 As shown below, $f$ is  an \Mnat-convex function.
 With this function $f$, the problem (DR) can be reformulated as
\[
 \begin{array}{l|lll}
& \mbox{Minimize}  & f(x) &  \\
& \mbox{subject to} 
& x(N) = D + B,
\\
& & \|x - (\bar{d}+\bar{b}) \|_1 \le 2\gamma,\\
& & x \in \dom f.
 \end{array}
\]
 Hence, (DR) is reformulated as (MML1).
 We note that in the reformulation of (DR) above, the constraint 
$\| x - (\bar{d} + \bar{b})\|_1  \le  2\gamma$
implies the inequality
$\bar{d} + \bar{b} -\gamma \1 \le    x
 \le \bar{d} + \bar{b} + \gamma \1$
that appears in the definition of $\dom f$  in \eqref{eqn:def-f}.
Hence, addition of this constraint 
in the definition of $\dom f$
is not necessary in the reformulation above, 
but it is added to obtain a better time complexity in the following section.

\begin{theorem}
\label{thm:Mnat-convex-laminar} 
 Function $f$ in {\rm \eqref{eqn:def-f}} is M-convex.
\end{theorem}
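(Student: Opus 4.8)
The plan is to verify the exchange property directly. Since $\dom f$ in \eqref{eqn:def-f} is contained in the hyperplane $\{x \in \Z^n \mid x(N) = D+B\}$, it suffices to check that $f$ satisfies (M-EXC): for any $x,y \in \dom f$ and any $i \in \suppp(x-y)$ I must produce some $j \in \suppm(x-y)$ with $f(x)+f(y) \ge f(x-\chi_i+\chi_j)+f(y+\chi_i-\chi_j)$. Two preliminary remarks make this a well-posed plan. First, $f$ is finite on $\dom f$ because the inner minimization runs over the nonempty bounded set $\{(d,b) \mid d+b=x,\ 0 \le b \le x,\ b(N)\le B\}$ (it contains $(x,\0)$), so an optimal split exists. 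Second, by Proposition \ref{prop:f2-Mnat-multi} each $c_i$ corresponds to an \Mnat-convex function, so the two-point inequalities of Proposition \ref{prop:2multimod} are available at every station, and these will be the only analytic input. Throughout, the vectors I construct stay in $\dom f$ because $\dom f$ is the intersection of an integer box with the hyperplane $x(N)=D+B$ and every exchange I use moves $x$ one step toward $y$ and $y$ one step toward $x$.

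The core construction works with optimal splits. Fix $x,y \in \dom f$ and let $(d^x,b^x)$, $(d^y,b^y)$ attain the minima defining $f(x)$, $f(y)$. For $i \in \suppp(x-y)$ the goal is to transfer one unit of total from station $i$ to a station $j \in \suppm(x-y)$ in the $x$-solution, and to make the reverse transfer in the $y$-solution, while keeping both splits feasible and not increasing the total cost $c(\cdot)+c(\cdot)$. The cleanest situation is a pure $d$-to-$d$ exchange: if some $i \in \suppp(x-y)$ has $d^x(i) > d^y(i)$ and some $j \in \suppm(x-y)$ has $d^y(j) > d^x(j)$, then I decrease $d^x(i)$ and increase $d^x(j)$, and symmetrically increase $d^y(i)$ and decrease $d^y(j)$. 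No $b$-coordinate changes, so $b^x(N)$ and $b^y(N)$ are untouched and $b(N)\le B$ is preserved, and nonnegativity holds because $d^x(i) > d^y(i) \ge 0$ and $d^y(j) > d^x(j) \ge 0$. Applying Proposition \ref{prop:2multimod}(ii) at station $i$ to the pair $(d^x(i),b^x(i)),(d^y(i),b^y(i))$ (which satisfies $d^x(i)>d^y(i)$ and $x(i)>y(i)$) and again at station $j$ with the roles of $x$ and $y$ reversed, then summing over all stations, yields $f(x)+f(y) \ge c(d',b')+c(d'',b'') \ge f(x-\chi_i+\chi_j)+f(y+\chi_i-\chi_j)$ for the two modified splits.

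The main obstacle is the remaining case, where no $d$-dominant pair exists and realizing the transfer forces a change in some $b$-coordinate, which may push $b^x(N)$ or $b^y(N)$ above $B$. My plan is first to normalize the two optimal splits so that they are as aligned as possible --- for instance choosing, among all pairs of optimal splits, one minimizing $\sum_k |b^x(k)-b^y(k)|$ --- and then to use the within-station swap of Proposition \ref{prop:2multimod}(i), which moves a unit between the $d$- and $b$-coordinate of one station without changing that station's total, to convert an unavoidable $b$-move into a $d$-move whenever the totals allow. In the genuinely constrained sub-case, where the unit to be added lives in a $b$-coordinate and the receiving solution already has $b(N)=B$, the feasibility constraints force the existence of a $b$-coordinate index $j$ with $b^y(j) > b^x(j)$; choosing this $j \in \suppm(x-y)$ makes the compensating move a $b$-to-$b$ exchange that leaves $b(N)$ equal to $B$, and the cost is again controlled by Proposition \ref{prop:2multimod}. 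Checking that some valid $j \in \suppm(x-y)$ with non-increasing total cost can always be found, across every combination of $d$- and $b$-coordinates at stations $i$ and $j$, is the delicate part, and this is exactly where the optimality (hence exchangeability) of the normalized splits is essential.

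A conceptually cleaner alternative I would also mention is to set $C(d,b)=c(d,b)$ on $\{(d,b)\mid 0\le b,\ b(N)\le B\}$ and $+\infty$ otherwise, and to note that $f(x)=\min\{C(d,b)\mid d+b=x\}$ is the aggregation of $C$ that merges each pair $\{d(i),b(i)\}$ into $x(i)$; since aggregation preserves \Mnat-convexity, the theorem would follow once $C$ is shown to be \Mnat-convex. The half-space $\{b(N)\le B\}$ is an \Mnat-convex (indeed laminar) set, but \Mnat-convexity is \emph{not} preserved under adding the indicator of an \Mnat-convex set in general, so establishing \Mnat-convexity of $C$ reduces to precisely the same feasibility-preserving exchange analysis as above. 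I therefore expect the direct verification to be the more transparent route, with the single constraint $b(N)\le B$ being the only real source of difficulty.
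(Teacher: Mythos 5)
Your proposal correctly identifies the tools (optimal splits, Proposition~\ref{prop:2multimod}, the budget $b(N)\le B$ as the crux) and your ``cleanest situation'' (a pure $d$-to-$d$ exchange) is sound --- it corresponds to the first of the four configurations the paper analyzes. But there is a genuine gap exactly where you say the delicate part is, and the specific claim you make there does not hold. In the constrained sub-case you assert that feasibility forces an index $j$ with $b^y(j)>b^x(j)$ \emph{and} $j\in\suppm(x-y)$, so that a two-station $b$-to-$b$ exchange suffices. The first half is true when both budgets are tight, but the second half is false in general: when $x(j)<y(j)$ the deficit can be carried entirely by the $d$-coordinate (i.e., $d^x(j)<d^y(j)$ while $b^x(j)\ge b^y(j)$), so all indices with $b^y>b^x$ may lie in stations where $x$ and $y$ agree. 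This is precisely the situation the paper's proof is built to handle: it uses a \emph{three-station} exchange in which a $d$-unit leaves station $i$, a $b$-unit arrives at station $j$, and at a third, mediating station $s\in N^0=N\setminus(\suppp(x-y)\cup\suppm(x-y))$ a $b$-unit is traded for a $d$-unit between the two splits (this is where Proposition~\ref{prop:2multimod}(i) is applied), keeping both budgets equal to $B$ while the totals at $s$ are untouched. Your ``within-station swap'' is the right tool, but you never observe that the mediator must generally be sought outside $\suppp(x-y)\cup\suppm(x-y)$, and you give no argument that a workable configuration of stations $(i,j,s)$ always exists. That existence is itself nontrivial: the paper proves that one of four conditions must hold by a counting argument that derives a contradiction with $d'(N)=d''(N)$ (a consequence of both budgets being tight) if all four fail. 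Your proposal has no analogue of this completeness step.

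A secondary, structural point: you set yourself a harder task than necessary by verifying (M-EXC) for \emph{every} $i\in\suppp(x-y)$. The paper instead invokes Theorem~\ref{thm:M-char-exists}(i), which only requires exhibiting \emph{some} pair $i\in\suppp(x'-x'')$, $j\in\suppm(x'-x'')$ satisfying the exchange inequality. This freedom to choose $i$ adapted to the optimal splits (e.g., $i\in\suppp(x'-x'')\cap\suppp(d'-d'')$) is what makes the case analysis finite and tractable; with $i$ prescribed arbitrarily, as in your plan, the configuration of signs at station $i$ is outside your control and the analysis becomes strictly harder. Your closing remark on the aggregation route is accurate --- it does reduce to the same feasibility-preserving exchange analysis --- but the direct route you chose still needs the three-station exchange and the existence argument to be a proof.
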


 We also consider a function $\hat f: \Z^n \to \Rinf$ by
\begin{align}
& \dom \hat f = \{x \in \Z^n \mid  \ell \le x \le u ,\  
\bar{d} + \bar{b} -\gamma \1 \le   x  \le \bar{d} + \bar{b} + \gamma \1 \},
\notag\\
& \hat f(x) = 
 \min\{ c(d, b) \ \mid
 d, b \in \Z_+^n,\ d + b = x,\ 
   b(N) \le  B \}
\quad (x\in \dom \hat f).
 \label{eqn:def-fhat}
\end{align}
 The difference from the function $f$ in \eqref{eqn:def-f}
is that the equation $x(N) = D+B$ 
is missing in the definition of $\hat{f}$.
 It is easy to see that for every $x \in \Z^n$ with 
$x(N) = D+B$, we have $\hat{f}(x) = f(x)$.
 In a similar way as $f$, we can show that $\hat{f}$
is an \Mnat-convex function.
 Hence, instead of $f$,
we may use $\hat{f}$ as an objective function of the
reformulation of (DR).
 This objective function is useful 
in obtaining a faster algorithm.
 We note that the effective domain of $\hat{f}$
is given by an interval.
 This fact is used in Section~\ref{sec:drp}.


\section{Steepest Descent Algorithm for (MML1)}
\label{sec:sda-MML1}

 In this section, we show that an optimal solution of the problem (MML1)
can be obtained by using a variant of the steepest descent algorithm
{\sc SteepestDescent} in Section~\ref{sec:Mmin}
for unconstrained M-convex function minimization.
 While we are mainly interested in the case where the center $\mycenter$  
is a feasible solution to (MML1), we also consider the case with
infeasible~$\mycenter$.  
 We assume that the effective domain $\dom f$ of 
the function $f$ is bounded; this assumption implies that
 $\arg\min f \ne \emptyset$, in particular.

 Let $\sigma \in \Z_+$ be 
the half of   L1-distance between $\mycenter$
and a nearest vector in $\dom f$,
and 
 $\tau \in \Z_+$
the half of   L1-distance between $\mycenter$
and a nearest minimizer of $f$, i.e.,
\begin{align}
\label{eqn:def-sigma}
\hspace*{-3mm}
  \sigma & = (1/2)\min\{\|x - \mycenter \|_1 \mid x \in \dom f \},
&
  \tau & = (1/2)\min\{\|x - \mycenter \|_1 \mid x \in \arg \min f \}.
\end{align}
 We have $\sigma = 0$ if $\mycenter$ is a a feasible solution.
 Also, note that   a minimizer $x^\bullet$ of $f$
with $\|x^\bullet - \mycenter\|_1 = 2\tau$
is given by a minimizer of a function $f(x) + \epsilon \|x - \mycenter\|_1$
with a sufficiently small positive $\epsilon$.
 Since the sum of an M-convex function and a separable-convex function
 is M-convex \cite[Theorem~6.13]{Murota03book},
a minimizer of $f(x) + \epsilon \|x - \mycenter\|_1$
can be obtained by any algorithm for unconstrained M-convex function minimization.
 If $\tau \le \gamma$, then
the vector $x^\bullet$ is optimal for (MML1).
 Hence, we assume $\tau > \gamma$ in the following.

 In the following, we denote by (MML1$(k)$)
the problem (MML1) with the constant $\gamma$
in the L1-distance constraint is replaced with a parameter $k \in \Z_+$.
 We first present a property of optimal solutions of
the  problem (MML1$(k)$).
 For every  $k$,
we denote by $M_k \subseteq \Z^n$ and by $\mu_k \in \R$, respectively, 
the set of optimal solutions 
and the optimal value of the problem (MML1$(k)$).
 We have $M_0 = \{\mycenter\}$ 
and $\mu_0 = f(\mycenter)$ if $\mycenter$ is feasible;
we also have
$M_k = \{x \in \arg\min f \mid \|x - \mycenter\|_1 \le 2k\}$
and $\mu_k = \min f$ for every $k \ge \tau$.

\begin{theorem}
\label{thm:norm-opt-main}
\ \\
{\rm (i)}
It holds that
$\mu_\sigma > \mu_{\sigma+1} > \cdots > \mu_{\tau}$
and $M_k \subseteq \{x \in \Z^n \mid \|x - \mycenter\|_1 = 2k \}$
for $k \in [\sigma, \tau]$.
\\
{\rm (ii)}
For every integer $k \in [\sigma, \tau-1]$
and $y \in M_k$,
there exists some $\tilde y \in M_{k+1}$ 
such that 
$\tilde y = y + \chi_i - \chi_j$ for some 
$i \in N \setminus \suppm(y - \mycenter)$
and $j \in N \setminus \suppp(y - \mycenter)$.
\\
{\rm (iii)}
 For every integer $k \in [\sigma, \tau-1]$  and $y \in M_{k+1}$,
 there exists some $y' \in M_{k}$ such that  
$y' = y - \chi_i + \chi_j$ for some 
$i \in \suppp(y - \mycenter)$
and $j \in \suppm(y - \mycenter)$.
\end{theorem}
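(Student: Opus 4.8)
The plan is to establish (i) first and then deduce the adjacency statements (ii) and (iii) from (i), a single ``mass-counting'' observation, and a nearest-point argument built on (M-EXC). Throughout I will use that, since $x(N)=\theta=\mycenter(N)$ for every feasible $x$, the vector $x-\mycenter$ is balanced, so $\|x-\mycenter\|_1$ is even and equals twice the mass of its positive part; consequently a single exchange $x\mapsto x+\chi_i-\chi_j$ changes $\|x-\mycenter\|_1$ by an amount in $\{-2,0,+2\}$, and it raises the distance by $2$ precisely when $i\notin\suppm(x-\mycenter)$ and $j\notin\suppp(x-\mycenter)$. This is exactly the support condition in (ii), and its mirror image (a distance drop of $2$) is the one in (iii); thus ``the move points outward/inward'' and ``the support condition holds'' are synonymous, which is what binds the three parts together.

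For part (i), monotonicity $\mu_\sigma\ge\cdots\ge\mu_\tau$ is immediate from the nested feasible regions. For the strict decrease and the tightness $M_k\subseteq\{\|x-\mycenter\|_1=2k\}$ I would fix $k\in[\sigma,\tau-1]$ and $y\in M_k$, observe that $y$ cannot be a global minimizer of $f$ (else $\tau\le\|y-\mycenter\|_1/2\le k$), and invoke the local optimality characterization to get an improving exchange with $f(y+\chi_a-\chi_b)<f(y)=\mu_k$. Since an exchange moves the distance by at most $2$, this improved point lies in the radius-$2(k+1)$ ball, giving $\mu_{k+1}<\mu_k$; and if some $y\in M_k$ had $\|y-\mycenter\|_1<2k$, the improved point would still lie in the radius-$2k$ ball, contradicting that $\mu_k$ is the minimum there. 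The case $k=\tau$ follows directly from the definition of $\tau$ as the minimal distance to $\arg\min f$.

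The engine for (ii) and (iii) is the counting lemma: if $y\in M_k$ and $w\in M_{k+1}$ then, by part (i), the positive parts of $w-\mycenter$ and $y-\mycenter$ have masses $k+1$ and $k$, so there exists $i\in\suppp(w-y)$ with $(w-\mycenter)(i)>0$ — otherwise every coordinate with $(w-\mycenter)(i)>0$ would satisfy $(w-\mycenter)(i)\le(y-\mycenter)(i)$, and summing would give $k+1\le k$. To prove (iii) I would fix $w\in M_{k+1}$, choose $y\in M_k$ minimizing $\|w-y\|_1$, pick such an index $i$, and apply (M-EXC) to $(w,y)$ to obtain $j\in\suppm(w-y)$ with $f(w-\chi_i+\chi_j)+f(y+\chi_i-\chi_j)\le\mu_{k+1}+\mu_k$. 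If $(w-\mycenter)(j)<0$ (the good case), then $w-\chi_i+\chi_j$ moves inward to distance $2k$, so $f(w-\chi_i+\chi_j)\ge\mu_k$ while $f(y+\chi_i-\chi_j)\ge\mu_{k+1}$; the displayed inequality then forces $w-\chi_i+\chi_j\in M_k$, the inward neighbor demanded by (iii). Part (ii) is symmetric: fix $y\in M_k$, take $w\in M_{k+1}$ nearest to $y$, and in the good case the same inequality forces $y+\chi_i-\chi_j\in M_{k+1}$, whose support conditions follow from part (i) since its value $\mu_{k+1}$ lies below $\mu_k$ and hence its distance must be $2(k+1)$.

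The step I expect to be the crux is ruling out the bad case $(w-\mycenter)(j)\ge0$, because $j$ is handed to us by the exchange axiom and is not under our control. Here the nearest-point choice does the work: in the bad case $w-\chi_i+\chi_j$ stays at distance $2(k+1)$, so $f(w-\chi_i+\chi_j)\ge\mu_{k+1}$; feeding this back through the same inequality shows that the other moved vector is optimal at its level — $y+\chi_i-\chi_j\in M_k$ for (iii), and dually $w-\chi_i+\chi_j\in M_{k+1}$ for (ii) — and in either case this newly produced optimum is strictly closer, by exactly $2$ in $\|\cdot\|_1$, to the fixed reference point than the chosen nearest optimum. This contradicts minimality, so the bad case cannot occur and the good case always supplies the required neighbor. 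Verifying the distance bookkeeping (that the relevant moved vectors land at distance exactly $2k$ or $2(k+1)$, and that $i\ne j$ since $\suppp(w-y)$ and $\suppm(w-y)$ are disjoint) is routine once part (i) is in hand.
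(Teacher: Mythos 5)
Your proof is correct, and although the core of parts (ii)/(iii) follows the same skeleton as the paper, your route through part (i) and your choice of auxiliary lemmas are genuinely different. For (i), the paper never invokes the local optimality criterion; instead it first proves a monotone-path lemma (Lemma \ref{lem:1-5}): from $y \in M_k$ it builds a sequence of exchanges to a nearest global minimizer along which $f$ strictly decreases while the distance to $\mycenter$ changes by $2$ per step, and then extracts a point at distance $2(k+1)$ of smaller value. Your argument --- $y \in M_k$ with $k < \tau$ cannot be a global minimizer, so the local optimality theorem of Section~\ref{sec:Mmin} (cf.~\cite[Theorem~6.26]{Murota03book}) supplies an improving exchange, which cannot leave the radius-$2(k+1)$ ball, and which stays inside the radius-$2k$ ball whenever $\|y-\mycenter\|_1 < 2k$ --- yields the strict decrease and the tightness $M_k \subseteq \{x \mid \|x - \mycenter\|_1 = 2k\}$ in one stroke, bypassing the path construction (and the nearest-minimizer argument buried inside its proof) entirely. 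For (ii)/(iii), your skeleton --- nearest-point choice, (M-EXC), a split on whether the exchange actually drops the distance, and in the bad case a contradiction via a strictly closer optimum --- coincides with the paper's proof, but two supporting steps differ: (a) your mass-counting lemma, which exploits the exact masses $k$ and $k+1$ supplied by part (i), always delivers an index $i \in \suppp(w-y)\cap\suppp(w-\mycenter)$, whereas the paper's Lemma \ref{lem:suppp-suppm} assumes only an inequality between the two distances, concludes a disjunction, and therefore forces a two-case analysis (the paper writes out only Case 1 and declares Case 2 symmetric); and (b) in the bad case you obtain the needed bound $\|(y+\chi_i-\chi_j) - \mycenter\|_1 \le 2k$ by direct bookkeeping (from $j \in \suppm(w-y)$ and $w(j) \ge \mycenter(j)$ one gets $y(j) > \mycenter(j)$), where the paper invokes the separable-convexity inequality of Lemma \ref{lem:norm-sepconv} to deduce $\alpha + \beta \le 0$. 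What your approach buys is a shorter, more self-contained proof of (i) and the elimination of the case symmetry in (ii)/(iii); what the paper's buys is auxiliary lemmas stated in greater generality (Lemma \ref{lem:suppp-suppm} needs no knowledge of the exact distances, and Lemmas \ref{lem:norm-sepconv} and \ref{lem:1-5} are stand-alone facts about exchanges and descent paths), at the cost of a longer chain of reasoning.
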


\noindent
This is the key property to prove the
validity of the algorithms presented in this section. 
 In particular, we see from the claim (i) in the theorem that
the set of optimal solutions of (MML1) is given by  $M_\gamma$.

 Theorem \ref{thm:norm-opt-main}  implies that
 a variant of  the steepest descent algorithm 
for unconstrained M-convex function minimization
finds an optimal solution of (MML1).

\begin{flushleft}
 \textbf{Algorithm} {\sc SteepestDescentMML1} 
\\
 \textbf{Step 0:} 
 Compute  $\sigma$ in \eqref{eqn:def-sigma}
and  $\xcirc \in M_\sigma$.
 Set $x_\sigma := \xcirc$ and $k:=\sigma + 1$.
\\
 \textbf{Step 1:} 
If  $k-1 = \gamma$,
then output $x_{k-1}$ and stop.\\
 \textbf{Step 2:} 
 Find $i_k, j_k \in N$ that minimizes $f(x_{k-1} + \chi_{i_k} - \chi_{j_k})$.\\
 \textbf{Step 3:} 
 Set $x_{k} := x_{k-1} + \chi_{i_k} - \chi_{j_k}$, 
$k:=k+1$, and go to Step 1.
\end{flushleft}

\begin{theorem}
\label{thm:main1}
The algorithm {\sc SteepestDescentMML1} applied to
an M-convex function $f: \Z^n \to \Rinf$
outputs an optimal solution of {\rm (MML1)}
in $\gamma-\sigma$ iterations.
 Moreover, the  vector $x_k$ 
generated in each iteration of the algorithm
satisfies $x_k \in M_{k}$.
\end{theorem}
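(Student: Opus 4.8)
The plan is to prove by induction on $k$ that the vector $x_k$ maintained by the algorithm satisfies $x_k \in M_k$ for every $k$ with $\sigma \le k \le \gamma$; granting this, both assertions of the theorem follow immediately, as explained at the end. Since (MML1) is feasible we have $\sigma \le \gamma$, and by assumption $\gamma < \tau$, so throughout the run the relevant indices lie in $[\sigma,\tau-1]$ and Theorem~\ref{thm:norm-opt-main} is applicable. The base case $x_\sigma = \xcirc \in M_\sigma$ holds by the choice made in Step~0.

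For the inductive step, I would assume $x_{k-1} \in M_{k-1}$ with $\sigma \le k-1 \le \gamma-1$, and let $x_k = x_{k-1}+\chi_{i_k}-\chi_{j_k}$ be the vector produced in Step~3. First I would verify that $x_k$ is feasible for (MML1$(k)$). The equality constraint is preserved because $\chi_{i_k}-\chi_{j_k}$ has zero coordinate sum and $\xcirc(N)=\theta$. For the L1-distance constraint, Theorem~\ref{thm:norm-opt-main}(i) gives $\|x_{k-1}-\mycenter\|_1 = 2(k-1)$, and a single move $\chi_{i_k}-\chi_{j_k}$ alters the two coordinates $i_k,j_k$ each by $\pm 1$, hence changes $\|\cdot - \mycenter\|_1$ by at most $2$; therefore $\|x_k-\mycenter\|_1 \le 2(k-1)+2 = 2k$. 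This is the one place where care is genuinely needed: the steepest-descent choice in Step~2 ignores the L1-distance constraint entirely, and the point is that, starting from a vector lying exactly at distance $2(k-1)$, a unit move can never overshoot the new budget $2k$.

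It then remains to show that $x_k$ is optimal for (MML1$(k)$). Here I would apply Theorem~\ref{thm:norm-opt-main}(ii) with index $k-1$ and $y=x_{k-1}\in M_{k-1}$, which produces a vector $\tilde y = x_{k-1}+\chi_i-\chi_j \in M_k$ reachable by a single unit move. Since Step~2 selects $i_k,j_k$ minimizing $f(x_{k-1}+\chi_{i_k}-\chi_{j_k})$ over all $i,j\in N$, we obtain $f(x_k) \le f(\tilde y) = \mu_k < +\infty$; in particular $x_k \in \dom f$. Thus $x_k$ is a feasible solution of (MML1$(k)$) whose objective value is at most the optimum $\mu_k$, which forces $f(x_k)=\mu_k$ and hence $x_k \in M_k$, closing the induction. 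Finally, the algorithm halts in Step~1 precisely when $k-1=\gamma$, outputting $x_\gamma \in M_\gamma$; since (MML1) coincides with (MML1$(\gamma)$), the set $M_\gamma$ is exactly its optimal solution set (as noted after Theorem~\ref{thm:norm-opt-main}), and the vectors $x_{\sigma+1},\dots,x_\gamma$ are produced in exactly $\gamma-\sigma$ iterations. The real difficulty of the whole development resides in Theorem~\ref{thm:norm-opt-main} rather than here; given that result, the verification above is essentially bookkeeping, the only subtle point being the feasibility check in the second paragraph.
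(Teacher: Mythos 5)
Your proof is correct and follows essentially the same route as the paper: induction on $k$ with base case $x_\sigma=\xcirc\in M_\sigma$, using Theorem~\ref{thm:norm-opt-main}(ii) to place a member of $M_k$ in the search neighborhood of $x_{k-1}$ and Theorem~\ref{thm:norm-opt-main}(i) for the distance/optimality bookkeeping. The paper's own proof is just a terser version of this argument; your write-up supplies the feasibility check and the $f(x_k)\le\mu_k\le f(x_k)$ sandwich that the paper leaves implicit.
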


\begin{proof}
 We prove by induction that
$x_k \in M_k$ for each $k$.
 Assume that $x_{k-1} \in M_{k-1}$ holds for some $k < \gamma$.
 By the behavior of the algorithm and Theorem \ref{thm:norm-opt-main},
$x_k$ is given as 
$x_k= x_{k-1} + \chi_{i_k} - \chi_{j_k}$  
with $i_k \ne j_k$ and satisfies
 $x_k \in M_k$.
\end{proof}
\noindent
 Note that the running time of the algorithm {\sc SteepestDescentMML1},
except for Step 0, is $\Oh(n^2 (\gamma - \sigma))$, provided that the evaluation of
function value can be done in constant time.
 Computation of $\sigma$ and $\xcirc$
in Step 0 can be done by 
finding a minimizer $\xcirc$ of a function $f(x) + \Upsilon \|x - \mycenter\|_1$
with a sufficiently large positive $\Upsilon > \max\{f(x) \mid x \in \dom f\}$
and then setting $\sigma = \|\xcirc - \mycenter\|_1$.
 Function $f(x) + \Upsilon \|x - \mycenter\|_1$ is also M-convex, 
and therefore its minimization can be done by any algorithm
for M-convex function minimization, even if
the value $\Upsilon$ is not given specifically.

Using Theorem \ref{thm:norm-opt-main} (iii),
we can also consider another variant of steepest descent algorithm
that starts from a nearest minimizer $x^\bullet$ of $f$
and greedily approaches $\mycenter$; see Appendix.



\section{Polynomial-Time Algorithms for (MML1)}
\label{sec:poly-MML1}

 In this section we show that the 
problem {\rm (MML1)} can be solved in polynomial time. 
 As in Section~\ref{sec:sda-MML1}, we assume that 
the value $\tau$ in \eqref{eqn:def-sigma} satisfies
$\tau > \gamma$,
and let 
$x^\bullet \in \dom f$ be a minimizer of $f$
with $\|x^\bullet - \mycenter\|_1 = 2\tau$, which is fixed throughout 
this section.

 We note that every vector $x$ 
satisfying the constraint $\|x - \mycenter \|_1 \le 2\gamma$
is contained in the interval $[\mycenter - \gamma \1, \mycenter + \gamma \1]$.
 Hence, we assume in this section 
that the effective domain $\dom f$ of  $f$
is also contained in the interval $[\mycenter - \gamma \1, \mycenter + \gamma \1]$;
if the given $f$ does not satisfy this condition, then
it suffices to consider the restriction of $f$ on this interval.
 This assumption implies that the L$_\infty$-diameter of $f$ is bounded by $2\gamma$;
we use this fact in the analysis of algorithms.

\subsection{Reduction to Problem with Linear Constraints}

 We first show that
the L1-distance constraint $\|x - \mycenter\|_1 \le 2 \gamma$
in (MML1) can be replaced with a system of linear constraints.
 Let us consider the following problem:
\[
 \begin{array}{l|lll}
\mbox{(MM-L)} & \mbox{Minimize}  & f(x) &  \\
& \mbox{subject to} 
& x(N) = \theta,
\\
& & 
x(P) = \mycenter(P) + \gamma,
\\
& &  
\hat{\ell} \le x \le \hat{u},
\\
& & x \in \dom f,
 \end{array}
\]
where  $P = \suppp(x^\bullet - \mycenter)$,
and $\hat{\ell}, \hat{u} \in \Z^n$ are vectors given by
\[
\hat{\ell}(i) = 
\begin{cases}
\mycenter(i) &  (i \in P),\\
\max\{\xbullet(i), \mycenter(i)  - \gamma\} & (i \in N \setminus P),
\end{cases}
\quad
\hat{u}(i) = 
\begin{cases}
\min\{\xbullet(i), \mycenter(i)  + \gamma\} & (i \in P),\\
\mycenter(i) &  (i \in N \setminus P).
\end{cases}
\]

\begin{lemma}
\label{lem:MML-MML1}
Every optimal solution of  {\rm (MM-L)} 
is also optimal for  {\rm (MML1)}.
\end{lemma}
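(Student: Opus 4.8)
The plan is to establish the two inequalities ``optimal value of (MM-L) $\ge \mu_\gamma$'' and ``optimal value of (MM-L) $\le \mu_\gamma$'', where $\mu_\gamma$ denotes the optimal value of (MML1), and then combine them. For the first inequality I would show that the whole feasible region of (MM-L) is contained in that of (MML1); in fact every feasible $x$ of (MM-L) satisfies $\|x - \mycenter\|_1 = 2\gamma$ exactly. The box constraints $\hat\ell \le x \le \hat u$ force $x(i) \ge \hat\ell(i) = \mycenter(i)$ for $i \in P$ and $x(i) \le \hat u(i) = \mycenter(i)$ for $i \in N \setminus P$, so $|x(i) - \mycenter(i)|$ equals $x(i) - \mycenter(i)$ on $P$ and $\mycenter(i) - x(i)$ on $N \setminus P$. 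Summing, and using the linear constraint $x(P) = \mycenter(P) + \gamma$ together with $x(N) = \mycenter(N) = \theta$, gives
\[
\|x - \mycenter\|_1 = \bigl(x(P) - \mycenter(P)\bigr) + \bigl(\mycenter(N \setminus P) - x(N \setminus P)\bigr) = \gamma + \gamma = 2\gamma.
\]
Since also $x(N) = \theta$ and $x \in \dom f$, such $x$ is feasible for (MML1), whence the optimal value of (MM-L) is at least $\mu_\gamma$.

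For the reverse inequality I would exhibit one optimal solution of (MML1) that lies in the feasible region of (MM-L); the natural candidate is the point at L1-distance $2\gamma$ on a geodesic running from the nearest minimizer $\xbullet$ back toward $\mycenter$. Starting from $\xbullet \in M_\tau$ and applying Theorem~\ref{thm:norm-opt-main}(iii) repeatedly for $k = \tau - 1, \tau - 2, \ldots, \gamma$, I obtain a sequence $\xbullet = y_\tau, y_{\tau-1}, \ldots, y_\gamma$ with $y_k \in M_k$ and $y_k = y_{k+1} - \chi_i + \chi_j$ for some $i \in \suppp(y_{k+1} - \mycenter)$ and $j \in \suppm(y_{k+1} - \mycenter)$. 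The crux is to prove by downward induction the sandwiching $\mycenter(i) \le y_k(i) \le \xbullet(i)$ for $i \in P$ and $\xbullet(i) \le y_k(i) \le \mycenter(i)$ for $i \in N \setminus P$. Granting this, $\suppp(y_k - \mycenter) \subseteq P$ and $\suppm(y_k - \mycenter) \subseteq N \setminus P$ hold throughout, so each application of Theorem~\ref{thm:norm-opt-main}(iii) only decreases a coordinate in $P$ (keeping it $\ge \mycenter$) and increases a coordinate in $N \setminus P$ (keeping it $\le \mycenter$), which is precisely what preserves the bounds at the next step.

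With the sandwiching in hand, together with $\|y_\gamma - \mycenter\|_1 = 2\gamma$ from Theorem~\ref{thm:norm-opt-main}(i), I would verify that $y_\gamma$ meets every constraint of (MM-L). On $P$ the sign pattern gives $\hat\ell(i) = \mycenter(i) \le y_\gamma(i)$, while $y_\gamma(i) \le \xbullet(i)$ and the distance bound $y_\gamma(i) \le \mycenter(i) + \gamma$ give $y_\gamma(i) \le \min\{\xbullet(i), \mycenter(i) + \gamma\} = \hat u(i)$; the bounds on $N \setminus P$ follow symmetrically. Moreover $\|y_\gamma - \mycenter\|_1 = 2\gamma$ with the consistent signs and $y_\gamma(N) = \theta$ forces $y_\gamma(P) = \mycenter(P) + \gamma$. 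Hence $y_\gamma$ is feasible for (MM-L) with $f(y_\gamma) = \mu_\gamma$, so the optimal value of (MM-L) is at most $\mu_\gamma$. Combining the two inequalities, the optimal value of (MM-L) equals $\mu_\gamma$; then any optimal solution of (MM-L), being feasible for (MML1) by the first paragraph and attaining value $\mu_\gamma$, is optimal for (MML1).

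I expect the main obstacle to be the inductive monotonicity argument of the middle step: one must show that the supports $\suppp(y_k - \mycenter)$ and $\suppm(y_k - \mycenter)$ never escape $P$ and $N \setminus P$, so that the moves prescribed by Theorem~\ref{thm:norm-opt-main}(iii) push coordinates away from $\mycenter$ but never past $\xbullet$. This containment is exactly what pins $y_\gamma$ inside the box $[\hat\ell, \hat u]$ and onto the hyperplane $x(P) = \mycenter(P) + \gamma$; everything else reduces to the L1-distance identity already used in the first paragraph.
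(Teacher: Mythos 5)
Your proof is correct and takes essentially the same route as the paper's: the same L1-distance identity shows every feasible point of (MM-L) satisfies the constraint of (MML1), and repeated application of Theorem~\ref{thm:norm-opt-main}(iii) starting from the nearest minimizer $\xbullet$ produces an optimal solution of (MML1) that is feasible for (MM-L). Your explicit downward induction establishing $\mycenter(i) \le y_k(i) \le \xbullet(i)$ on $P$ and $\xbullet(i) \le y_k(i) \le \mycenter(i)$ on $N \setminus P$ merely fills in the detail that the paper leaves implicit in the phrase ``repeated use of Theorem~\ref{thm:norm-opt-main}(iii).''
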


 While the problem (MM-L) does not fit into the framework
of M-convex function minimization problem,
due to the constraint $x(P) = \mycenter(P) + \gamma$,
it can be formulated
as the minimization of the sum of two M-convex functions.
 Indeed, (MM-L) is equivalent to the minimization of
the sum of functions $f_1, f_2: \Z^n \to \Rinf$  given by
\begin{align*}
f_1(x)  
& = 
\begin{cases}
f(x)  & (\mbox{if }
x(N) = \theta),\\
+ \infty & (\mbox{otherwise}),
\end{cases}
\\
f_2(x)  
& = 
\begin{cases}
0 & (\mbox{if }x(N) = \theta,\ 
x(P) = \mycenter(P) + \gamma,\ \hat{\ell} \le x \le \hat u
),\\
+ \infty & (\mbox{otherwise}).
\end{cases}
\end{align*}
 It is not difficult to see that $f_1$ and $f_2$ satisfy (M-EXC), i.e.,
the two functions are M-convex.

 It is known that minimization of the sum of two \Mnat-convex functions
$f_1, f_2: \Z^n \to \Rinf$
can be solved in polynomial time (see, e.g., \cite{Murota03book}),
and the fastest algorithm runs in $\Oh(n^6 (\log L)^2 \log(nK))$ time~\cite{IMM05},
where $L$ is the maximum of the L$_\infty$-diameter of $\dom f_1$ and of $\dom f_2$
(see \eqref{eqn:def-sizeL} for the definition of L$_\infty$-diameter)
and 
$K$ is given by
$K = \max_{h=1,2}  \max\{|f_h(x) - f_h(y)| \mid x, y \in \dom f_h \}$.
 For the functions $f_1$ and $f_2$ defined above,
the L$_\infty$-diameter of $f_1$ and $f_2$ is bounded by 
$\max_{i \in N}\{\hat u(i)-\hat{\ell}(i)\} \le \gamma$.
 Hence, 
we obtain the following result.

\begin{theorem}
The problem {\rm (MML1)}
 can be solved in $\Oh(n^6 (\log \gamma)^2 \log(nK_f))$ time,
 where 
 $K_f = \max\{|f(x) - f(y)| \mid x, y \in \dom f\}$.
\end{theorem}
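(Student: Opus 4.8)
The plan is to assemble the facts established earlier in this section into the claimed time bound, so the proof is essentially a matter of bookkeeping once M-convexity is confirmed. By Lemma~\ref{lem:MML-MML1}, every optimal solution of (MM-L) is optimal for (MML1), so it suffices to solve (MM-L) within the stated time. As already observed, (MM-L) coincides with the minimization of $f_1 + f_2$, where $f_1$ is $f$ restricted to the hyperplane $\{x \mid x(N) = \theta\}$ and $f_2$ is the $\{0, +\infty\}$-valued indicator of the set $B = \{x \in \Z^n \mid x(N) = \theta,\ x(P) = \mycenter(P) + \gamma,\ \hat{\ell} \le x \le \hat{u}\}$. The first step is therefore to confirm that both $f_1$ and $f_2$ are M-convex, so that the polynomial-time algorithm for minimizing a sum of two \Mnat-convex functions applies; only then may the cited complexity estimate be invoked.

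For $f_1$ this is immediate: since $\dom f$ is contained in $\{x \mid x(N) = \theta\}$ by the standing assumption on $f$, we have $f_1 = f$, which is M-convex. For $f_2$ I would verify (M-EXC) directly for the set $B$. Given $x, y \in B$ and $i \in \suppp(x-y)$, the index $i$ lies either in $P$ or in $N \setminus P$; because both vectors share the same value $x(P) = y(P)$ and hence the same value $x(N \setminus P) = y(N \setminus P)$, the group containing $i$ must also contain some $j \in \suppm(x-y)$. The box bounds $\hat{\ell} \le x, y \le \hat{u}$ then guarantee that the simultaneous exchange $x - \chi_i + \chi_j$, $y + \chi_i - \chi_j$ stays in $B$, since $x(i) > y(i) \ge \hat{\ell}(i)$ and $x(j) < y(j) \le \hat{u}(j)$, with the symmetric inequalities handling $y$. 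Hence $B$ is an M-convex set and $f_2$ is M-convex.

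With M-convexity in hand, I would invoke the cited bound $\Oh(n^6 (\log L)^2 \log(nK))$ for minimizing $f_1 + f_2$ and substitute the parameter estimates. The L$_\infty$-diameter of each $\dom f_h$ is bounded by $\max_{i \in N}\{\hat{u}(i) - \hat{\ell}(i)\} \le \gamma$, giving $L \le \gamma$. For $K$, the term for $h = 2$ vanishes because $f_2 \equiv 0$ on $\dom f_2$, while for $h = 1$ we have $\dom f_1 = \dom f$, so the variation of $f_1$ is at most $K_f = \max\{|f(x) - f(y)| \mid x, y \in \dom f\}$; thus $K \le K_f$. Substituting $L \le \gamma$ and $K \le K_f$ yields the claimed $\Oh(n^6 (\log \gamma)^2 \log(n K_f))$ bound. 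The only genuinely delicate point is the M-convexity of $f_2$: one must check that the exchange partner $j$ can always be found \emph{within} the same group $P$ or $N \setminus P$ as $i$, which is precisely what the two equality constraints $x(N) = \theta$ and $x(P) = \mycenter(P) + \gamma$ force; everything else reduces to the parameter estimates above.
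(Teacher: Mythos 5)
Your proof is correct and follows essentially the same route as the paper: reduce (MML1) to (MM-L) via Lemma~\ref{lem:MML-MML1}, write (MM-L) as the minimization of $f_1+f_2$, invoke the $\Oh(n^6(\log L)^2\log(nK))$ bound for minimizing a sum of two M-convex functions, and estimate $L \le \gamma$ and $K \le K_f$. The only difference is that you explicitly verify (M-EXC) for the indicator function $f_2$ (finding the exchange index $j$ within the same group $P$ or $N\setminus P$ as $i$), a detail the paper dismisses as ``not difficult to see''; your verification is valid.
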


\subsection{Reduction to M-convex Function Minimization}

 We now explain an alternative approach to solve
the problem (MM-L) by the reduction to the minimization
of an M-convex function.

 For a vector $y \in \Z^{N\setminus P}$ we define a set $T(y) \subseteq \Z^n$
by
\[
 T(y)  = \{x \in \dom f \mid 
 x(i)=y(i)\ (i \in N \setminus P),\  
\hat{\ell}(i) \le x(i) \le \hat{u}(i) \ (i \in P)
 \}.
\]
 Then, the function $g: \Z^{N\setminus P} \to \Rinf$ is defined as follows:
\begin{align}
g(y) &=
\begin{cases}
 \min\{f(x) \mid x \in T(y)\}
& (\mbox{if }
y(N \setminus P) = \theta - (\mycenter(P) + \gamma) 
\\
& 
\hspace*{15mm}
\mbox{ and } 
\hat{\ell}(i) \le y(i) \le \hat{u}(i) \ (\forall i \in N \setminus P)),
\\
 + \infty & (\mbox{otherwise}).
\end{cases}
\label{eqn:def-g}
\end{align}
 By definition, $x \in \Z^n$  is a feasible solution of (MM-L)
if and only if the vector $y \in \Z^{N \setminus P}$
given by $y(i)=x(i)\ (i \in N \setminus P)$
satisfies $y \in \dom g$ and $x \in T(y)$.
 Therefore,  the problem (MM-L) can be reduced to the minimization
of function $g$;  for a minimizer $y^*\in \Z^{N \setminus P}$ of $g$,
the vector  $x^* \in T(y^*)$ with $g(y^*)= f(x^*)$ 
is an optimal solution of (MM-L).

\begin{proposition}
\label{prop:g-Mnat-polymat}
Function $g$ is  M-convex.
\end{proposition}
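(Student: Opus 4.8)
The plan is to realize $g$ as the result of applying to the M-convex function $f$ two standard operations of discrete convex analysis---restriction to a box and partial minimization over the coordinates in $P$---and then to convert the resulting \Mnat-convexity back into M-convexity by means of Theorem~\ref{thm:Mnat-convex-ineq}(i).

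First I would fold the box constraints into the objective. Let $\tilde{f}\colon\Z^n\to\Rinf$ be defined by $\tilde{f}(x)=f(x)$ if $\hat{\ell}\le x\le\hat{u}$ and $\tilde{f}(x)=+\infty$ otherwise; equivalently $\tilde{f}$ is the sum of $f$ and the (separable convex) indicator of the interval $[\hat{\ell},\hat{u}]$, so $\tilde{f}$ is M-convex by \cite[Theorem~6.13]{Murota03book}, and $\dom\tilde{f}$ is finite since $\dom f$ is bounded. Next I would project out the coordinates in $P$: define $g_0\colon\Z^{N\setminus P}\to\Rinf$ by $g_0(y)=\min\{\tilde{f}(x)\mid x\in\Z^n,\ x(i)=y(i)\ (i\in N\setminus P)\}$. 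Since partial minimization over a subset of coordinates preserves \Mnat-convexity (see \cite{Murota03book}) and the finiteness of $\dom\tilde{f}$ guarantees $g_0>-\infty$, the function $g_0$ is \Mnat-convex. A direct check---using that the box on $P$ is built into $\tilde{f}$ and that, for $y$ in the box, the relation $x(i)=y(i)$ forces the box on $N\setminus P$---shows that $g_0$ coincides with $g$ on the hyperplane $H=\{y\mid y(N\setminus P)=\theta-(\mycenter(P)+\gamma)\}$ and that $g\equiv+\infty$ off $H$; that is, $g$ is the restriction of $g_0$ to $H$.

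It then remains to deduce M-convexity of $g$ from \Mnat-convexity of $g_0$. By Theorem~\ref{thm:M-char-exists}(i) it suffices to verify that for all $y,y'\in\dom g$ with $y\ne y'$ there exist $i\in\suppp(y-y')$ and $j\in\suppm(y-y')$ with $g(y)+g(y')\ge g(y-\chi_i+\chi_j)+g(y'+\chi_i-\chi_j)$. So take such $y,y'$; since $\dom g\subseteq H$ we have $y(N\setminus P)=y'(N\setminus P)$, hence both $\suppp(y-y')$ and $\suppm(y-y')$ are nonempty. Applying Theorem~\ref{thm:Mnat-convex-ineq}(i) to $g_0$ (the hypothesis $y(N\setminus P)\le y'(N\setminus P)$ holding with equality) yields, for a chosen $i\in\suppp(y-y')$, some $j\in\suppm(y-y')$ with $g_0(y)+g_0(y')\ge g_0(y-\chi_i+\chi_j)+g_0(y'+\chi_i-\chi_j)$. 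Because the two modified vectors again lie in $H$, where $g$ agrees with $g_0$, this is precisely the required inequality, and hence $g$ is M-convex.

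The step I expect to be delicate is the bookkeeping between M- and \Mnat-convexity. The free projection $g_0$ is in general only \Mnat-convex, since integrating out the $P$-coordinates destroys the constant coordinate-sum structure; it is exactly the linear constraint $y(N\setminus P)=\theta-(\mycenter(P)+\gamma)$---equivalently the value $x(P)=\mycenter(P)+\gamma$ pinned down by the M-convexity of $f$---that restores it. Making this last passage rigorous is what Theorem~\ref{thm:Mnat-convex-ineq}(i) is tailored for, so the whole argument turns on invoking that result at two points of equal coordinate sum. (An alternative, more computational route would skip $g_0$ and verify the exchange property of $g$ directly from minimizers $x,x'$ of $\tilde{f}$ with $x(P)=x'(P)$, but this essentially re-proves the projection theorem in the present special case and seems less transparent.)
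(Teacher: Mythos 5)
Your proof is correct and takes essentially the same route as the paper's: the paper likewise realizes $g$ by restricting $f$ to a box, projecting out the $P$-coordinates (obtaining an \Mnat-convex function via the projection theorem \cite[Theorem~6.15~(2)]{Murota03book}), and then restricting to the hyperplane $y(N \setminus P) = \theta - (\mycenter(P)+\gamma)$ together with the box on $N \setminus P$. The only cosmetic differences are that the paper keeps the box on $N\setminus P$ in the final restriction step rather than folding it into $\tilde f$, and cites \cite[Theorem~6.13]{Murota03book} for the concluding \Mnat-to-M conversion, whereas you verify that step directly via Theorem~\ref{thm:Mnat-convex-ineq}~(i) and Theorem~\ref{thm:M-char-exists}~(i).
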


 We analyze the running time of the algorithm.
 By Theorem \ref{thm:M-time},
the minimization of $g$
can be done 
in   $\Oh(n\sp{3}\log (\gamma/n)F_g)$ time, 
where $F_g$ denotes the time to evaluate the function value of $g$.
 The evaluation of the value of function $g$
can be seen as the minimization of an M-convex function.
 Since the L$_\infty$-diameter of $f$ is bounded by $\gamma$,
the evaluation of $g$ can be done 
in   $\Oh(n\sp{3}\log (\gamma/n))$ time 
by Theorem \ref{thm:M-time},
provided that the function evaluation of $f$ can be done in constant time.
 Hence, we obtain the following time complexity result:

\begin{theorem}
\label{thm:MML1-polytime}
The problem {\rm (MML1)}
can be solved  
in   $\Oh(n\sp{6}(\log (\gamma/n))^2)$ time. 
\end{theorem}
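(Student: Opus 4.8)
The plan is to assemble the reduction developed in this subsection with the two running-time bounds supplied by Theorem~\ref{thm:M-time}. First I would invoke Lemma~\ref{lem:MML-MML1} to replace (MML1) by the problem (MM-L): since every optimal solution of (MM-L) is optimal for (MML1), it suffices to solve (MM-L). As explained in the paragraph preceding Proposition~\ref{prop:g-Mnat-polymat}, solving (MM-L) reduces to minimizing the function $g$ on $\Z^{N \setminus P}$ and then recovering, for a minimizer $y^*$, a vector $x^* \in T(y^*)$ with $g(y^*) = f(x^*)$; this $x^*$ is optimal for (MM-L). By Proposition~\ref{prop:g-Mnat-polymat} the function $g$ is M-convex, so the outer problem is a genuine M-convex function minimization (over at most $n$ variables).

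Next I would apply Theorem~\ref{thm:M-time} to this outer problem. Under the standing assumption of this section that $\dom f \subseteq [\mycenter - \gamma \1, \mycenter + \gamma \1]$, the L$_\infty$-diameter of $\dom g$ is at most $2\gamma$, so $\log(L/n) = \Oh(\log(\gamma/n))$ and the outer minimization runs in $\Oh(n^3 \log(\gamma/n) F_g)$ time, where $F_g$ is the cost of a single evaluation of $g$.

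The crux is then to bound $F_g$. Evaluating $g(y)$ amounts to minimizing $f$ over $T(y)$, i.e.\ over those $x$ that agree with $y$ on $N \setminus P$, lie in $[\hat{\ell}, \hat{u}]$ on $P$, and satisfy $x(N) = \theta$. I would argue that this inner problem is again an M-convex function minimization: fixing the coordinates in $N \setminus P$ and intersecting the effective domain with the box $[\hat{\ell}, \hat{u}]$ preserves M-convexity, while the constraint $x(N) = \theta$ is automatically met because $\dom f$ already lies in that hyperplane. The effective domain of this restricted function again has L$_\infty$-diameter at most $\gamma$, so by Theorem~\ref{thm:M-time} each evaluation costs $\Oh(n^3 \log(\gamma/n))$, assuming the value of $f$ can be obtained in constant time.

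Finally I would substitute $F_g = \Oh(n^3 \log(\gamma/n))$ into the outer bound $\Oh(n^3 \log(\gamma/n) F_g)$, which gives the claimed $\Oh(n^6 (\log(\gamma/n))^2)$ time. The step demanding the most care is confirming that the inner evaluation is legitimately an M-convex minimization with L$_\infty$-diameter bounded by $\gamma$; once that is in place, the remainder is simply the bookkeeping of the two invocations of Theorem~\ref{thm:M-time}.
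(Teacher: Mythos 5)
Your proposal is correct and follows essentially the same route as the paper: reduce (MML1) to (MM-L) via Lemma~\ref{lem:MML-MML1}, reduce (MM-L) to minimizing the M-convex function $g$ of Proposition~\ref{prop:g-Mnat-polymat}, and apply Theorem~\ref{thm:M-time} twice (once for the outer minimization of $g$, once per evaluation of $g$, each with L$_\infty$-diameter $\Oh(\gamma)$) to get $\Oh(n^6(\log(\gamma/n))^2)$. Your extra remark justifying that the inner evaluation is itself an M-convex minimization (fixing coordinates and intersecting with a box preserve M-convexity, and $x(N)=\theta$ is automatic) is a detail the paper leaves implicit, but it is the same argument.
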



\section{Application to Dock Re-allocation Problem}
\label{sec:drp}

 As observed in Section~\ref{sec:DR-Mconv}, the dock re-allocation problem (DR)
can be seen as a special case of the problem (MML1).
 In this section, we apply the results obtained in Sections 
\ref{sec:sda-MML1} and 
\ref{sec:poly-MML1} for (MML1) to obtain algorithms for (DR).
 In particular, we show that the problem (DR) can be solved in polynomial time.

\subsection{Steepest Descent Algorithm}
\label{sec:sda-drp}

 We first present a steepest descent algorithm for (DR)
by applying the algorithm in Section~\ref{sec:sda-MML1} for (MML1).
 We also show that a fast implementation of the steepest descent algorithm
coincides with the greedy algorithm proposed by Freund et al.~\cite{FHS2017}

 Recall that (DR) can be reformulated in the form of (MML1) as
\[
 \begin{array}{l|lll}
& \mbox{Minimize}  & f(x) &  \\
& \mbox{subject to} 
& x(N) = D + B,
\ \|x - (\bar{d}+\bar{b}) \|_1 \le 2\gamma,\ 
 x \in \dom f,
 \end{array}
\]
where the M-convex function $f: \Z^n \to \Rinf$ is given by
\eqref{eqn:def-f}.
 By definition, the function value $f(x)$ for a given $x \in \dom f$ 
can be computed by solving the following problem:
\[
 \begin{array}{l|lll}
\mbox{(SRA($x$))}
 & \mbox{Minimize }  & 
 c(x-b, b) \equiv
\sum_{i=1}^n c_i(x(i) - b(i), b(i))
\\
& \mbox{subject to } & 
\displaystyle  b(N) \le  B,\  \0 \le b \le x,\   b \in \Z^n.
 \end{array}
\]
 It is observed that 
for each $i \in N$, 
$c_i(x(i) - b(i), b(i))$ is a convex function in 
variable $b(i)$ since $c_i$ is a multimodular (or \Mnat-convex) function.
 Hence, the problem (SRA$(x)$) can be seen as a simple resource allocation
problem and therefore the evaluation of the function value of $f$ can be done
in $\Oh(n \log (B/n))$ time (see, e.g., \cite{Hoch94}).

 The algorithm {\sc SteepestDescentMML1} 
is rewritten in term of the problem (DR) 
as follows.
 Recall that $(\bar{d}, \bar{b})$ is a feasible solution
of the problem (DR), and therefore
the vector $\bar{x} = \bar{d} + \bar{b}$ is used as the initial
solution of the steepest descent algorithm.

\begin{flushleft}
 \textbf{Algorithm} {\sc SteepestDescentDR} 
\\
 \textbf{Step 0:} 
 Set $x_0 := \bar{d} + \bar{b}$ and $k:= 1$.
\\
 \textbf{Step 1:} 
If  $k-1 = \gamma$,
then output the solution $(x_{k-1}-b_{k-1}, b_{k-1})$ and stop.\\
 \textbf{Step 2:} 
 For every distinct $i,j \in N$, compute the 
value $f(x_{k-1} + \chi_{i} - \chi_{j})$
by solving  
\\
\noindent
\phantom{Step 1: }
 (SRA$(x_{k-1} + \chi_{i} - \chi_{j})$),
and find $i_k, j_k \in N$ minimizing   $f(x_{k-1} + \chi_{i_k} - \chi_{j_k})$.
\\
 \textbf{Step 3:} 
Let $b_k$ be an optimal solution of (SRA$(x_{k-1} + \chi_{i_k} - \chi_{j_k})$),
set 
\\
\noindent
\phantom{Step 1: }
$x_{k} := x_{k-1} + \chi_{i_k} - \chi_{j_k}$, $k:=k+1$, and 
go to Step 1.
\end{flushleft}

 Since the evaluation of the  function value $f(x)$
requires $\Oh(n \log (B/n))$ time, 
each iteration requires
$\Oh(n^3 \log (B/n))$ time, and 
the total running time of the algorithm 
is $\Oh(\gamma  n^3 \log (B/n))$.

 The next lemma shows that
the evaluation of the value $f(x)$ can be done faster
by maintaining an optimal solution of the problem (SRA$(x_k)$)
for each $k$.
 This lemma is essentially equivalent to
Lemma 6 in \cite{FHS2017}, while 
the statement of the lemma is described differently in our notation.

\begin{lemma}[{\cite[Lemma~6]{FHS2017}}]
\label{lem:sra-opt-update}
 Let  $x \in \dom f$, and  $b \in \Z^n$ be an optimal  solution of
the problem {\rm  (SRA$(x)$)}. 
 Also, let $i, j \in N$ be distinct elements
such that $x + \chi_i - \chi_j \in \dom f$.
 Then, there exists  an optimal  solution  $\hat b \in \Z^n$ of
the problem {\rm  (SRA$(x + \chi_i - \chi_j)$)} such that
\begin{align}
& \hat{b} \in \{b, b + \chi_i, b - \chi_j, b + \chi_i - \chi_j\}
\notag\\
& \qquad
\cup \{ b + \chi_i - \chi_t \mid  t \in N \setminus \{i,j\}\}
\cup \{ b + \chi_s - \chi_j \mid  s \in N \setminus \{i,j\}\}.
\label{eqn:sra-opt-update-cond1}
\end{align}
\end{lemma}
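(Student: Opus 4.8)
The plan is to prove the lemma by a minimal-distance exchange argument built on the fact that each $c_i$ is multimodular, so that the two-variable exchange inequality \eqref{eqn:2multimod-1} of Proposition~\ref{prop:2multimod}(i) applies to it station by station. I would write both resource-allocation problems through $d=x-b$ and $\hat d = x'-\hat b$, where $x':=x+\chi_i-\chi_j$, so that (SRA$(x)$) seeks $b$ minimizing $\sum_k c_k(x(k)-b(k),b(k))$ over $\{\0\le b\le x,\ b(N)\le B\}$ and (SRA$(x')$) seeks $\hat b$ over $\{\0\le\hat b\le x',\ \hat b(N)\le B\}$. Among all optimal $\hat b$ of (SRA$(x')$) I would fix one minimizing $\|\hat b-b\|_1$, set $\rho=\hat b-b$, and show that if $\rho$ is not one of the listed vectors then a cost-non-increasing, distance-decreasing modification of $\hat b$ exists, contradicting minimality.

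The engine is a per-station exchange. For a station $k$ I compare its two states $(x(k)-b(k),b(k))$ and $(x'(k)-\hat b(k),\hat b(k))$ through the single multimodular function $c_k$; note the open-dock gap satisfies $\hat d(k)-d(k)=-\rho(k)$ for $k\notin\{i,j\}$ and is shifted by $+1$ at $k=i$ and by $-1$ at $k=j$. Applying \eqref{eqn:2multimod-1} at a station $s\in P^{+}:=\{k:\rho(k)>0\}$ and at a station $t\in P^{-}:=\{k:\rho(k)<0\}$ and summing yields $c(d,b)+c(\hat d,\hat b)\ge c(d^*,b^*)+c(\hat d^*,\hat b^*)$ with $b^*=b+\chi_s-\chi_t$, $\hat b^*=\hat b-\chi_s+\chi_t$, $d^*=x-b^*$, $\hat d^*=x'-\hat b^*$. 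Since $b$ is optimal for (SRA$(x)$) and $\hat b$ for (SRA$(x')$), we have $c(d^*,b^*)\ge c(d,b)$ and $c(\hat d^*,\hat b^*)\ge c(\hat d,\hat b)$, so the inequality forces equality throughout; hence $\hat b^*$ is again optimal for (SRA$(x')$) yet strictly closer to $b$, a contradiction. The same inequality applied at a single station gives the one-sided moves $\hat b-\chi_s$ ($s\in P^{+}$) and $\hat b+\chi_t$ ($t\in P^{-}$); the budget slack these require I would supply from the identity $\hat b(N)-b(N)=\sum_k\rho(k)$, which is positive exactly when there is room to raise $b(N)$ and negative exactly when there is room to raise $\hat b(N)$.

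With these moves available the proof reduces to a short case analysis. First I would show $\|\rho\|_\infty\le1$: a coordinate with $|\rho(k)|\ge2$ never sits at the degenerate boundary, so either a swap applies or, using the sign of $\sum_k\rho(k)$, a one-sided move applies. The move-validity condition $\hat d(\cdot)\ne d(\cdot)$ degenerates precisely when a $+1$ of $\rho$ sits at $i$ or a $-1$ sits at $j$; hence when no swap applies we must have $P^{+}\subseteq\{i\}$ or $P^{-}\subseteq\{j\}$, and combining this with the one-sided moves leaves exactly the configurations $\rho\in\{\0,\chi_i,-\chi_j,\chi_i-\chi_j\}\cup\{\chi_i-\chi_t\}\cup\{\chi_s-\chi_j\}$, which translates into the asserted set for $\hat b$.

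I expect the main obstacle to be the feasibility bookkeeping in the boundary cases rather than the cost estimates: inequality \eqref{eqn:2multimod-1} does all the work on the objective, but at each reduction step one must check that the auxiliary A-vector $b^*$ and the new B-vector $\hat b^*$ respect both the box constraints ($\0\le b^*\le x$, $\0\le\hat b^*\le x'$) and the budgets $b(N),\hat b(N)\le B$, and one must verify that the two irreducible degeneracies, $\rho(i)=1$ and $\rho(j)=-1$, are exactly the ones that survive every move. Getting the accounting right at the stations $i$ and $j$, where both the box bound and the open-dock gap are shifted, is the delicate point; everywhere else the argument is uniform.
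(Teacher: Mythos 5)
Your proposal is correct and follows essentially the same route as the paper's proof: fix an optimal $\hat b$ of (SRA$(x+\chi_i-\chi_j)$) minimizing $\|\hat b - b\|_1$, apply the station-wise multimodular exchange inequality (Proposition~\ref{prop:2multimod}(i)) to produce a swap move when a non-degenerate positive and negative index coexist, use one-sided moves backed by budget slack otherwise, and conclude by the case analysis in which the only surviving degeneracies are a $+1$ at $i$ and a $-1$ at $j$. The paper organizes the same argument as its Claims~1 and~2 (conditions (a)/(b) and the split on $b(N)$, $\hat b(N)$ versus $B$), and the feasibility bookkeeping you flag as the delicate point is exactly what its explicit checks carry out.
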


It follows from Lemma \ref{lem:sra-opt-update} that
for each $i, j \in N$, 
an optimal solution of the problem (SRA$(x + \chi_{i} - \chi_{j})$)
can be found in $\Oh(n)$ time, provided that
an optimal solution of the problem {\rm  (SRA$(x)$)} is available.
 Therefore, 
the running time of the algorithm {\sc SteepestDescentDR}
can be reduced to $\Oh(\gamma \, n^3)$.
 
 In fact, Lemma \ref{lem:sra-opt-update} implies that
the running time $\Oh(n^3)$ in each iteration can be further reduced
by computing elements $i_k, j_k \in N$ 
minimizing the value $f(x_{k-1} + \chi_{i_k} - \chi_{j_k})$
and 
an optimal solution of the problem (SRA$(x + \chi_{i_k} - \chi_{j_k})$)
simultaneously.
 We denote
\[
R = \{(d, b) \in \Z^n \times \Z^n
\mid
 d(N)+b(N) = D+ B,\  b(N) \le  B,\ 
\ell \le d + b \le u,\ 
  d \ge \0,\  b\ge \0
\},
\]
i.e., $R$ is the set of vectors
$(d, b) \in \Z^n \times \Z^n$ satisfying the constraints
of the problem (DR), except for the L1-distance constraint
$\|(\bar{d} + \bar{b}) - (d+b)\|_1 \le  2\gamma$.
 We also denote
$N(d,b)
 = N_1(d,b) \cup N_2(d,b) \cup \cdots  \cup N_6(d,b)$,
where
\begin{align*}
N_1(d,b)
& =  \{(d + \chi_i - \chi_j, b)  \in \Z^n \times \Z^n
\mid  
i, j \in N,\ i \ne j\},
\notag
\\
N_2(d,b)
& =\{(d - \chi_j, b + \chi_i) \in \Z^n \times \Z^n \mid  
i, j \in N,\ i \ne j\},
\notag
\\
N_3(d,b)
& =\{(d + \chi_i, b - \chi_j) \in \Z^n \times \Z^n \mid  
i, j \in N,\ i \ne j\},
\notag
\\
N_4(d,b)
& =\{(d, b + \chi_i - \chi_j) \in \Z^n \times \Z^n \mid  
i, j \in N,\ i \ne j\},
\notag
\\
N_5(d,b)
& =\{(d - \chi_j + \chi_t, b + \chi_i - \chi_t) \in \Z^n \times \Z^n \mid  
i, j \in N,\ i \ne j,\ t \in N \setminus \{i,j\}  \},
\\
N_6(d,b)
& =
\{(d - \chi_s + \chi_i, b + \chi_s - \chi_j) \in \Z^n \times \Z^n \mid  
i, j \in N,\ i \ne j,\ s \in N \setminus \{i,j\}
 \}.
\notag
\end{align*}
\noindent
 The following property 
follows immediately from Lemma \ref{lem:sra-opt-update}.

\begin{lemma}
\label{lem:bdr-neighbor}
 For  $x \in \dom f$, and  an optimal  solution $b \in \Z^n$ of
 {\rm  (SRA$(x)$)},
we have
\begin{align*}
 \min\{f(x + \chi_i - \chi_j) \mid i,j \in N,\  
\ell \le x + \chi_i - \chi_j \le u \}
 & = 
\min\{c(d', b') \mid (d', b') \in N(d,b) \cap R \}.
\end{align*}
\end{lemma}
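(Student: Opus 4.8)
The plan is to prove the two inequalities ``$\le$'' and ``$\ge$'' separately, in each case passing between an exchange $x \mapsto x + \chi_i - \chi_j$ on the $x$-variables and a move in the $(d,b)$-space recorded by the families $N_1(d,b),\dots,N_6(d,b)$. Throughout I put $d = x - b$, so that optimality of $b$ for (SRA$(x)$) means precisely $c(d,b) = f(x)$ and $(d,b)$ satisfies $b(N) \le B$, $\0 \le b \le x$, together with $\ell \le x \le u$ coming from $x \in \dom f$; in particular $(d,b) \in R$. As in the definition of $N(d,b)$, the exchanges on the left are read over distinct $i,j$. The first step is to set up the exact correspondence between the six possibilities for an updated optimal solution in \eqref{eqn:sra-opt-update-cond1} and the six neighbor families. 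Fixing distinct $i,j$, writing $x' = x + \chi_i - \chi_j$, and setting $d' = x' - b' = d + (\chi_i - \chi_j) - (b' - b)$ for a candidate $b'$, a direct substitution of each of the six forms of $b'$ shows that $(d',b')$ lands in exactly one family: $b' = b$ gives $N_1$, $b' = b + \chi_i$ gives $N_2$, $b' = b - \chi_j$ gives $N_3$, $b' = b + \chi_i - \chi_j$ gives $N_4$, $b' = b + \chi_i - \chi_t$ gives $N_5$, and $b' = b + \chi_s - \chi_j$ gives $N_6$. Conversely, every $(d',b') \in N_\ell(d,b)$ satisfies $d' + b' = x'$ for its associated pair $i \ne j$, as one checks family by family.

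For the inequality ``$\le$'' (left $\le$ right), I take any $(d',b') \in N(d,b) \cap R$ and let $i \ne j$ be its associated pair, so that $d' + b' = x' := x + \chi_i - \chi_j$. Membership in $R$ gives $\ell \le x' \le u$, so $(i,j)$ is admissible on the left; it also gives $b'(N) \le B$, $b' \ge \0$, and $d' = x' - b' \ge \0$, i.e. $\0 \le b' \le x'$, so $b'$ is feasible for (SRA$(x')$). Hence $f(x') \le c(d',b')$, and minimizing over all such neighbors yields the inequality.

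For the inequality ``$\ge$'', which is the crux, I let $(i,j)$ with $i \ne j$ attain the left-hand minimum and set $x' = x + \chi_i - \chi_j$ with $\ell \le x' \le u$. Applying Lemma~\ref{lem:sra-opt-update} to the optimal $b$ of (SRA$(x)$) produces an optimal solution $\hat b$ of (SRA$(x')$) of one of the six listed forms. By the correspondence above, $(d',\hat b)$ with $d' = x' - \hat b$ lies in $N(d,b)$, and I verify $(d',\hat b) \in R$: the sum constraint and the bound $\hat b(N) \le B$ hold since $\hat b$ is feasible for (SRA$(x')$), the box constraint $\ell \le x' \le u$ holds by admissibility of $(i,j)$, and nonnegativity of $d'$ and $\hat b$ again follows from SRA-feasibility. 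Since $\hat b$ is optimal, $c(d',\hat b) = f(x')$, so the right-hand minimum is at most $f(x')$, giving ``$\ge$''.

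I expect the main obstacle to be this ``$\ge$'' direction, and specifically the appeal to Lemma~\ref{lem:sra-opt-update}: an arbitrary optimal $(d',b')$ realizing $f(x')$ need not lie in $N(d,b)$, and it is precisely the structural guarantee of that lemma --- that \emph{some} optimal solution of the perturbed resource-allocation problem differs from $b$ by one of the six prescribed patterns --- that forces a minimizer into the neighbor families. The remainder is bookkeeping: matching the constraints defining $R$ against SRA-feasibility together with the box $\ell \le x' \le u$, and checking that the stationary choice $i = j$ (which contributes the value $f(x) = c(d,b)$ on the left) is treated consistently with the proper-exchange convention built into $N(d,b)$.
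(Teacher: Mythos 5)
Your proof is correct and is essentially the paper's own argument: the paper offers no separate proof of Lemma~\ref{lem:bdr-neighbor}, stating only that it ``follows immediately from Lemma~\ref{lem:sra-opt-update},'' and your write-up supplies exactly that deduction --- the one-to-one matching of the six candidate forms in \eqref{eqn:sra-opt-update-cond1} with the families $N_1(d,b),\dots,N_6(d,b)$ via $d' = (x+\chi_i-\chi_j) - b'$, the feasibility bookkeeping between $R$ and the constraints of (SRA$(x+\chi_i-\chi_j)$) for the easy inequality, and the appeal to Lemma~\ref{lem:sra-opt-update} to force some optimal solution of the perturbed problem into the neighborhood for the hard inequality. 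Your closing observations (reading both minima over distinct $i,j$, and identifying the structural guarantee of Lemma~\ref{lem:sra-opt-update} as the crux) match the conventions the paper itself relies on, so nothing further is needed.
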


 By Lemma \ref{lem:bdr-neighbor}, 
the algorithm {\sc SteepestDescentDR} can be rewritten
as follows in terms of original variables $(d, b)$ as follows,
which  is nothing but the greedy algorithm by Freund et al.~\cite{FHS2017}.

\begin{flushleft}
 \textbf{Algorithm} {\sc SteepestDescentDR$'$} \\
 \textbf{Step 0:} 
 Set $d_0: = \bar{d}$, $b_0 :=\bar{b}$, and $k:=1$.
\\
 \textbf{Step 1:} 
If  $k-1 = \gamma$,
then output the solution $(d_{k-1}, b_{k-1})$ and stop.\\
 \textbf{Step 2:} 
 Find $(d', b') \in N(d_{k-1}, b_{k-1}) \cap R$
that minimizes $c(d', b')$.\\
 \textbf{Step 3:} 
 Set $(d_{k}, b_{k}) := (d', b')$ and go to Step 1.
\end{flushleft}

 For $h = 1, 2, \ldots, 6$, the value
$\min\{c(d', b') \mid (d', b') \in N_h(d_{k-1},b_{k-1}) \cap R \}$
can be computed in $\Oh(\log n)$ time by using six binary heaps
that  maintain the following six sets of numbers,
as in \cite[Section~3.1]{FHS2017}:
\begin{align*}
 & 
\{c_i(d_{k-1}(i)+1, b(i)) - c_i(d_{k-1}(i), b(i)) \mid  i \in N\}, 
\\
 & 
\{c_i(d_{k-1}(i)-1, b(i)) - c_i(d_{k-1}(i), b(i)) \mid i \in N\}, 
\\
 & 
\{c_i(d(i), b(i)+1) - c_i(d(i), b(i)) \mid i \in N\}, 
\\
 & 
\{c_i(d(i), b(i)-1) - c_i(d(i), b(i)) \mid i \in N\}, 
\\
 & 
\{c_i(d(i)+1, b(i)-1) - c_i(d(i), b(i)) \mid i \in N\}, 
\\
 & 
\{c_i(d(i)-1, b(i)+1) - c_i(d(i), b(i)) \mid i \in N\}. 
\end{align*}
 Hence, each iteration of the algorithm can be done in 
$\Oh(\log n)$ time.
 Since the initialization of the heaps requires 
$\Oh(n)$ time, we obtain  the following result:

\begin{theorem}[\cite{FHS2017}]
The algorithm {\sc SteepestDescentDR} (and also {\sc SteepestDescentDR$'$})
can be implemented so that it runs in $\Oh(n + \gamma \log n)$ time.
\end{theorem}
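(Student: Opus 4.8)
The plan is to turn the two cost claims already flagged before the theorem—that each per-neighborhood minimum in Step~2 costs $\Oh(\log n)$ and that initialization costs $\Oh(n)$—into a complete accounting, adding the one piece not yet addressed, namely the heap-maintenance cost in Step~3. Since the loop of {\sc SteepestDescentDR$'$} runs for exactly $\gamma$ iterations (the index $k$ increases from $1$ until $k-1=\gamma$), an $\Oh(\log n)$ bound per iteration together with an $\Oh(n)$ initialization yields the asserted $\Oh(n+\gamma\log n)$ total; the equivalence with {\sc SteepestDescentDR} provided by Lemma~\ref{lem:bdr-neighbor} then transfers the bound to both algorithms.

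First I would set up the six binary min-heaps, each holding one entry per station $i\in N$ equal to the corresponding marginal cost listed just before the theorem, with the convention that an entry is set to $+\infty$ whenever the associated single-coordinate change is infeasible with respect to the box constraint $\ell\le d+b\le u$ and the nonnegativity of $d,b$ (e.g.\ the entry $c_i(d(i)-1,b(i))-c_i(d(i),b(i))$ is $+\infty$ unless $d(i)\ge 1$). Building each heap from its $n$ values takes $\Oh(n)$ time by standard bottom-up heap construction, so Step~0 costs $\Oh(n)$ in total.

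Next I would show that each quantity $\min\{c(d',b')\mid(d',b')\in N_h(d_{k-1},b_{k-1})\cap R\}$ is computable in $\Oh(\log n)$ time. The key observation is that, because $c$ is separable and each $N_h$ perturbs only a constant number of coordinates, the cost change along any member of $N_h$ is a sum of at most three marginal costs, each drawn from one of the six heaps and indexed by one of the distinct stations $i,j$ (or $i,j,t$). Hence the desired minimum is obtained by combining the smallest few elements of the relevant heaps subject to distinctness of indices; since only the $\Oh(1)$ smallest elements of a binary min-heap are ever needed—the second smallest lies among the root's children, and similarly for a constant number of further ranks—these can be read off in $\Oh(1)$ time after an $\Oh(\log n)$ query, and there are only six neighborhoods to scan. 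The one global constraint $b(N)\le B$ is binding only for moves in $N_2$ (the sole neighborhood that raises $b(N)$) and is enforced by a single test $b(N)<B$ before admitting any $N_2$ candidate; the remaining constraints in $R$ are per-coordinate and already encoded through the $+\infty$ convention.

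Finally, in Step~3 the chosen move alters $(d,b)$ in at most three coordinates; for each altered coordinate I would recompute its six marginal-cost values and update its position in each of the six heaps by a delete-and-reinsert (or sift) operation costing $\Oh(\log n)$. As this involves only $\Oh(1)$ heap updates, the maintenance cost per iteration is $\Oh(\log n)$, matching the Step~2 bound. The hard part, I expect, is the bookkeeping in Step~2: correctly mapping each $N_h$ to the right pair or triple of heaps, resolving the distinct-index requirement through top-$\Oh(1)$ queries, and handling feasibility via the $+\infty$ entries together with the single $b(N)<B$ check. Once this is in place, summing $\Oh(n)$ for Step~0 and $\gamma\cdot\Oh(\log n)$ over the iterations gives the stated $\Oh(n+\gamma\log n)$ running time.
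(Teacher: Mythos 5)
Your proof is correct and follows essentially the same approach as the paper: six binary heaps of per-station marginal costs (as in Freund et al., Section~3.1), built in $\Oh(n)$ time, with each of the $\gamma$ iterations costing $\Oh(\log n)$ for the neighborhood minimization and heap maintenance. The details you supply—the $+\infty$ convention for per-coordinate infeasibility, the single $b(N)<B$ test for $N_2$, and resolving index distinctness via top-$\Oh(1)$ heap queries—are exactly the bookkeeping the paper leaves implicit by citing \cite{FHS2017}.
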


\subsection{Polynomial-Time Solvability of (DR)}
\label{sec:poly-drp}

 The running time of the algorithm 
{\sc SteepestDescentDR} is proportional to the  problem parameter $\gamma$
and therefore pseudo-polynomial time.
 We show that (DR) can be solved in polynomial time
by using the approach in Section~\ref{sec:poly-MML1}.

 To apply the approach in Section~\ref{sec:poly-MML1}, 
consider the minimization problem 
of the  \Mnat-convex function  $\hat f$ in \eqref{eqn:def-fhat}
under the constraint $x(N) = D+ B$,
which is equivalent to the following:
\[
 \begin{array}{l|lll}
\mbox{(DA)} & \mbox{Minimize }  & 
\displaystyle c(d, b)\\
& \mbox{subject to } & 
\displaystyle  d(N)+b(N) = D+ B,\\
& & 
\displaystyle  b(N) \le  B,\\
& & \ell \le d + b \le u,\    d, b \in \Z^n_+,\\
& &  \bar{d} + \bar{b} -\gamma \1 \le    d + b
 \le \bar{d} + \bar{b} + \gamma \1.
 \end{array}
\]

 We analyze the time complexity required to solve the problem (DA).
 Since the effective domain of the function $\hat{f}$ 
is an interval,
we can apply Theorem \ref{thm:Mnat-polymat-time}
to obtain the following time bound.

\begin{proposition}
\label{prop:DA-time}
The problem {\rm (DA)} can be solved in
$\Oh(n\sp{3}\log (\gamma/n)\log (B/n))$ time.
\end{proposition}

By using a special structure of (DA), we can prove the
following proximity theorem, which leads to a faster algorithm for (DA).
\begin{theorem}
 Let $(d,b) \in \Z^n \times \Z^n$ be a feasible solution of {\rm (DA)}
that minimizes the value $c(d,b)$ under the condition
that all components of $d$ and $b$ are even integers.
Then, there exists some optimal solution 
$(d^*,b^*) \in  \Z^n \times \Z$ of {\rm (DA)}
such that 
$\|(d^*+b^*)  - (d+b)\|_1 \le 16 n$.
\end{theorem}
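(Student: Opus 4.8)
The plan is to prove this as a scaling proximity theorem, working throughout in the $2n$-dimensional space of the pair $(d,b)$ and exploiting the \Mnat-convexity of the separable cost. First I would record that $c(d,b)=\sum_{i\in N} c_i(d(i),b(i))$ is \Mnat-convex on $\Z^{2n}$: each $c_i$, extended by $+\infty$ outside $\Z_+^2$, is \Mnat-convex by Proposition~\ref{prop:f2-Mnat-multi}, and a separable sum of \Mnat-convex functions over disjoint coordinate blocks is again \Mnat-convex, as one checks directly from (\Mnat-EXC). The feasible region of (DA) is obtained from $\dom c = \Z_+^{2n}$ by imposing the equality $d(N)+b(N)=D+B$, the inequality $b(N)\le B$, and the box constraints $\ell\le d+b\le u$; the even minimizer $(d,b)$ minimizes $c$ over the intersection of this region with $(2\Z)^{2n}$, while the sought $(d^*,b^*)$ minimizes $c$ over the whole region.

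Next I would fix a global optimal solution $(d^*,b^*)$ of (DA) minimizing the $\ell_1$-distance $\|(d^*,b^*)-(d,b)\|_1$ in $\Z^{2n}$; since $\|(d^*+b^*)-(d+b)\|_1\le\|(d^*,b^*)-(d,b)\|_1$ by the coordinatewise triangle inequality, it suffices to bound the latter quantity by $16n$. Assume for contradiction that this distance exceeds $16n$, and set $w=(d^*,b^*)-(d,b)$, with coordinates indexed by the $2n$-element ground set of $d$- and $b$-entries. Both endpoints lie on the hyperplane $\{d(N)+b(N)=D+B\}$, so the coordinates of $w$ sum to zero and both $\suppp(w)$ and $\suppm(w)$ are nonempty. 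Because the two points have equal coordinate sum, Theorem~\ref{thm:Mnat-convex-ineq}(i) (whose per-station content is recorded in Proposition~\ref{prop:2multimod}) applies to the pair $\big((d^*,b^*),(d,b)\big)$ and yields, for a suitable $i\in\suppp(w)$, a partner $j\in\suppm(w)$ with
\[
c(d^*,b^*)+c(d,b)\ \ge\ c\big((d^*,b^*)-\chi_i+\chi_j\big)+c\big((d,b)+\chi_i-\chi_j\big).
\]
The move $-\chi_i+\chi_j$ preserves the equality constraint and steers $(d^*,b^*)$ toward $(d,b)$; if $(d^*,b^*)-\chi_i+\chi_j$ is feasible then global optimality makes the first term on the right at least $c(d^*,b^*)$, forcing $c\big((d,b)+\chi_i-\chi_j\big)\le c(d,b)$, i.e. a unit step of $(d,b)$ toward $(d^*,b^*)$ does not increase the cost.

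To reach a contradiction with even-minimality I would pair such unit steps into a genuine step on the even sublattice. Once $\|w\|_1>16n$, an averaging argument over the $2n$ coordinates (together with $w(N)=0$) guarantees a common exchange direction $\chi_i-\chi_j$ with $|w(i)|,|w(j)|\ge2$ along which $(d,b)+\chi_i-\chi_j$ and $(d,b)+2\chi_i-2\chi_j$ are both feasible; chaining two successive unit-step inequalities of the above type along this direction then gives $c\big((d,b)+2\chi_i-2\chi_j\big)\le c(d,b)$ with a strict drop somewhere, contradicting minimality of $(d,b)$ over the even feasible points. The constant $16n$ is precisely the threshold at which enough "room" of magnitude $\ge2$ on both the positive and negative side of $w$ is guaranteed after the coordinates blocked by the coupling constraints have been discounted.

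I expect the main obstacle to be maintaining feasibility of the exchange moves against the two coupling constraints simultaneously. The equality $d(N)+b(N)=D+B$ is automatically preserved by any $\chi_i-\chi_j$ move, but the inequality $b(N)\le B$, the nonnegativity $d,b\ge\0$, and the boxes $\ell\le d+b\le u$ can all block a naive choice of the exchange partner $j$, and they interact differently depending on whether $i$ and $j$ are $d$- or $b$-coordinates. Resolving this requires using the full flexibility of the multimodular exchange inequalities in Proposition~\ref{prop:2multimod}—both \eqref{eqn:2multimod-1} and \eqref{eqn:2multimod-2}—to select directions that never consume more than the available slack in $b(N)\le B$ or in the boxes, and then verifying that the parity (factor-$2$) pairing can still be carried out after this steering; this feasibility bookkeeping, rather than any single inequality, is where the constant $16$ and the linear-in-$n$ bound are actually pinned down.
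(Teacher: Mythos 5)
Your proposal has the right general flavor (proximity via exchange inequalities, contradiction with minimality over the even sublattice), but it defers or gets wrong exactly the steps that constitute the actual proof. First, the exchange axiom does not support your pigeonhole step: (\Mnat-EXC) and Theorem~\ref{thm:Mnat-convex-ineq} let you \emph{choose} $i\in\suppp(w)$ but only guarantee the existence of \emph{some} partner $j\in\suppm(w)$. Averaging over $\|w\|_1>16n$ does produce coordinates with $|w(i)|\ge 2$ on both sides, but the partner the exchange property hands you may be a coordinate with $|w(j)|=1$, so the ``common exchange direction along which two unit steps can be chained'' is not guaranteed; this is precisely why discrete proximity theorems need more than averaging. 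Second, even granting the chained steps, you only obtain the weak inequality $c\bigl((d,b)+2\chi_i-2\chi_j\bigr)\le c(d,b)$, which does not contradict minimality over the even feasible points (that minimality also only gives $\ge$). Strictness must come from somewhere; in the paper it comes from fixing $(d^*,b^*)$ as the optimal solution of \emph{minimum} $\|d^*-d\|_1+\|b^*-b\|_1$ and showing that the perturbed optimal point is feasible and strictly closer to $(d,b)$, hence (by the tie-breaking choice) strictly worse, which converts the exchange inequality into a strict descent at $(d,b)$. You fix the nearest optimum but never use it to generate a strict inequality.

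Third, the feasibility bookkeeping against $b(N)\le B$, $d,b\ge\0$, and the boxes on $d+b$ --- which you explicitly set aside as ``the main obstacle'' --- is the substance of the paper's argument, not a technicality to be filled in later. The paper (Theorem~\ref{thm:bdr-proximity}, of which the stated theorem is the case $\lambda=2$, since an even minimizer is a $2$-optimal solution) never performs free exchanges in $\Z^{2n}$; instead it uses \emph{paired} moves such as $(d-\lambda\chi_i+\lambda\chi_j,\,b+\lambda\chi_i-\lambda\chi_j)$, which leave both $d+b$ and $b(N)$ invariant and are therefore automatically feasible, and moves that change $b(N)$ only when slack is available. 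These are organized through six index sets $I_1,\dots,I_6$ recording where $d$, $b$, and $x=d+b$ deviate from the optimum by at least $\lambda$ in each sign pattern; a sequence of lemmas shows various of these sets must be empty, and in every surviving case a direct counting argument using $x(N)=x^*(N)$ and the bounds $|x(i)-x^*(i)|\le 2\lambda$ (coordinatewise, from the emptiness relations) yields $\|x-x^*\|_1\le 8\lambda n$. Without an argument of this kind, your constant $16$ is not derived from anything, and the proof as sketched cannot be completed along the stated lines.
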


\begin{theorem}
\label{thm:fast-da}
 A proximity-scaling algorithm 
finds an optimal solution of the problem {\rm (DA)}  
in $\Oh(n \log n \log ((D+B)/n))$ time.
\end{theorem}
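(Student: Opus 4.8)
The plan is to run a standard proximity-scaling scheme in the aggregated variable $x = d+b$, using the preceding proximity theorem to keep each scaled subproblem confined to a box of size $\Oh(n)$. Recall that (DA) is the minimization of the \Mnat-convex function $\hat f$ of \eqref{eqn:def-fhat} subject to $x(N) = D+B$, and that the $L_\infty$-diameter $L$ of its feasible region satisfies $L \le D+B$ because $\0 \le x$ and $x(N) = D+B$. I would let the scaling factor run through $s = 2^K, 2^{K-1}, \ldots, 2, 1$ with $K = \Oh(\log(L/n)) = \Oh(\log((D+B)/n))$, so that the number of phases is $\Oh(\log((D+B)/n))$. At each scale $s$ the algorithm maintains an incumbent $x_s$ that is optimal for the scale-$s$ subproblem, i.e., optimal over the feasible vectors reachable from $x_s$ by exchanges $+s\chi_i - s\chi_j$; the next phase refines $x_s$ to a scale-$(s/2)$ optimum, and the final scale-$1$ incumbent is an optimal solution of (DA).

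Correctness of the refinement rests on a scale-$s$ version of the proximity theorem. I would obtain it by rescaling: restricting attention to the sublattice through $x_s$ and dividing the data $\ell, u, \bar{d}, \bar{b}, B, \gamma$ and the two-variable costs by $s$ produces another instance of the same type, since multimodularity of the $c_i$ is preserved under such scaling. Rescaling the stated $L_1$-bound then gives that a scale-$(2s)$ optimum and a nearby scale-$s$ optimum differ by at most $16n$ measured in $s$-units. Consequently, in passing from scale $2s$ to scale $s$ it suffices to search within a box of $L_1$-radius $\Oh(ns)$ around the current incumbent, and this box is guaranteed to contain the refined optimum; this confinement is exactly what the scaling algorithm needs.

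Within a single phase I would solve the confined scale-$s$ subproblem by a scale-$s$ steepest-descent pass, as in Section~\ref{sec:sda-drp}. Because each exchange $x \to x + s\chi_i - s\chi_j$ toward the target reduces the $L_1$-distance to it by $2s$, and the box has $L_1$-radius $\Oh(ns)$, the pass terminates after $\Oh(n)$ steps. Each step is to be implemented in $\Oh(\log n)$ time by reusing the heap machinery of Section~\ref{sec:sda-drp}: I maintain an optimal split $b$ of the current $x$ together with the six binary heaps of marginal costs, select the best scale-$s$ exchange by combining the heap minima as in Lemma~\ref{lem:bdr-neighbor}, and update $b$ and the heaps locally via a scale-$s$ analogue of Lemma~\ref{lem:sra-opt-update}. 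This yields $\Oh(n \log n)$ per phase and $\Oh(n \log n \log((D+B)/n))$ overall, improving on Proposition~\ref{prop:DA-time}.

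The main obstacle is the per-step update at scale $s$. Lemma~\ref{lem:sra-opt-update} is stated for unit exchanges, and applying it $s$ times would cost $\Oh(s)$ per step and destroy the bound. I must instead show that after a single scale-$s$ exchange an optimal resource-allocation split for (SRA($x + s\chi_i - s\chi_j$)) is obtained from the old split by touching only $\Oh(1)$ heap entries, so that the update costs $\Oh(\log n)$ independently of $s$. The coupling constraint $b(N) \le B$ is what makes this delicate, since a change in $x$ could in principle force a global re-optimization of $b$; establishing that this re-optimization stays local at every scale (the scaled analogue of Lemma~\ref{lem:sra-opt-update}) is the crux, after which the counting of steps and phases, and the verification that scale-$s$ steepest descent certifies optimality over the box, are routine.
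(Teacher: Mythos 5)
Your overall architecture --- $\Oh(\log((D+B)/n))$ scaling phases, proximity confinement giving $\Oh(n)$ steepest-descent steps per phase, and $\Oh(\log n)$ per step via the heap machinery --- matches the paper's algorithm {\sc ProximityScalingDA}, and your rescaling derivation of a scale-$s$ proximity bound is consistent in spirit with the paper's Theorem~\ref{thm:bdr-proximity}. However, the step you yourself flag as the crux --- that after a single exchange $x \to x + s\chi_i - s\chi_j$ an optimal split $b$ for (SRA($x + s\chi_i - s\chi_j$)) can be recovered from the old optimal split by touching only $\Oh(1)$ heap entries --- is not merely left unproven; in the form you state it (for exact, unit-granularity optimal splits) it is false. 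If station $j$ has all docks occupied, i.e.\ $b(j) = x(j)$, then a scale-$s$ decrease at $j$ forces $b(j)$ to drop by $s$, and those $s$ freed bikes are optimally re-assigned across other stations; with interleaved marginal costs the new exact optimal split can differ from the old one in $\Omega(\min(s,n))$ coordinates. So maintaining exact optimal splits costs $\Omega(\min(s,n))$ heap operations per step, which destroys the $\Oh(n\log n)$-per-phase bound.

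The idea that closes this gap in the paper is to scale the bike variables together with the dock variables: the scaled problem (DA$(\lambda)$) restricts \emph{both} $d$ and $b$ to sublattices $\check{d} + \lambda\Z^n$ and $\check{b} + \lambda\Z^n$ through a fixed feasible solution, and the observation that $c_i^\lambda(\eta,\zeta) = c_i(\lambda\eta + \check{d}(i), \lambda\zeta + \check{b}(i))$ is again multimodular makes (DA$(\lambda)$) literally an instance of the same problem type. Consequently all unit-scale machinery --- bike-optimality, Lemma~\ref{lem:sra-opt-update}, Lemma~\ref{lem:bdr-neighbor}, the six heaps, and the running-time bound of Theorem~\ref{thm:sda-DA} --- applies verbatim to the scaled instance: the split being maintained is optimal only within the $\lambda$-sublattice, and it is this sublattice-restricted split (not the exact one) that moves by a single $\pm\lambda$ exchange per step; this is exactly the ``scale-$s$ analogue'' you need, obtained for free rather than by a new argument. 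The same device is what lets the paper prove the proximity theorem directly for $\lambda$-optimal solutions. One further piece your proposal omits: when $\lambda$ halves, the sublattice refines, so each phase must begin by re-computing a bike-optimal split at the new scale (Step~1 of {\sc ProximityScalingDA}); the paper handles this with a steepest-descent variant in $\Oh(n\log n)$ time, which stays within the per-phase budget.
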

\noindent
Details of the proximity theorem and the proximity-scaling algorithm
 are given in Appendix.

 We then analyze the time complexity for solving the 
dock re-allocation problem (DR), provided that
an optimal solution (DA) is available.
 An application of  Theorem \ref{thm:MML1-polytime} to (DR)
immediately implies  the following time bound.

\begin{proposition}
 The problem {\rm (DR)} can be solved in 
 $\Oh(n\sp{7}(\log (\gamma/n))^2 \log (B/n))$ time. 
\end{proposition}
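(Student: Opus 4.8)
The plan is to apply the second polynomial-time approach of Section~\ref{sec:poly-MML1}---the reduction to M-convex function minimization behind Theorem~\ref{thm:MML1-polytime}---to the reformulation of (DR) as (MML1) given in Section~\ref{sec:DR-Mconv}, and then to re-examine its running time taking into account that the M-convex objective $f$ of \eqref{eqn:def-f} cannot be evaluated in constant time, contrary to the simplifying assumption made in the analysis preceding Theorem~\ref{thm:MML1-polytime}.

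First I would collect the two ingredients the reduction needs. By Section~\ref{sec:DR-Mconv}, (DR) is the instance of (MML1) whose objective $f$ is given by \eqref{eqn:def-f}, with $\mycenter = \bar{d} + \bar{b}$; as noted at the start of Section~\ref{sec:poly-MML1}, after restricting to $[\mycenter - \gamma\1, \mycenter + \gamma\1]$ the L$_\infty$-diameter of $\dom f$ is $\Oh(\gamma)$. As observed in Section~\ref{sec:sda-drp}, evaluating $f(x)$ for $x \in \dom f$ amounts to solving the resource allocation problem (SRA($x$)), so a single evaluation of $f$ costs $F_f = \Oh(n\log(B/n))$ time rather than constant time.

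Next I would trace the running time of the algorithm through its two nested applications of Theorem~\ref{thm:M-time}. By Lemma~\ref{lem:MML-MML1} it suffices to solve (MM-L), which by Proposition~\ref{prop:g-Mnat-polymat} is reduced to minimizing the M-convex function $g$ of \eqref{eqn:def-g} over $\Z^{N\setminus P}$. The outer level minimizes $g$, while each evaluation of $g(y)$ is itself an M-convex minimization of $f$ over the interval-restricted set $T(y)$. Since the relevant L$_\infty$-diameters at both levels are $\Oh(\gamma)$ (both $\dom g$ and $T(y)$ inherit coordinate bounds lying within $[\mycenter-\gamma\1,\mycenter+\gamma\1]$) and both levels involve at most $n$ variables, one evaluation of $g$ costs $F_g = \Oh(n^3\log(\gamma/n)\,F_f) = \Oh(n^4\log(\gamma/n)\log(B/n))$, and minimizing $g$ then costs $\Oh(n^3\log(\gamma/n)\,F_g) = \Oh(n^7(\log(\gamma/n))^2\log(B/n))$. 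The preprocessing---computing a nearest minimizer $\xbullet$ of $f$ (hence $P$, $\hat\ell$, $\hat u$) via a single M-convex minimization---costs $\Oh(n^4\log(\gamma/n)\log(B/n))$ and is dominated by the main term, giving the claimed bound.

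There is no genuine obstacle here; the argument is essentially a bookkeeping substitution of $F_f = \Oh(n\log(B/n))$ into the two-level complexity already established for Theorem~\ref{thm:MML1-polytime}. The only point deserving care is confirming that both nested invocations of Theorem~\ref{thm:M-time} incur a $\log(\gamma/n)$ factor, i.e.\ that the L$_\infty$-diameters of $\dom g$ and of each $T(y)$ are $\Oh(\gamma)$; this follows directly from the containment $\dom f \subseteq [\mycenter-\gamma\1,\mycenter+\gamma\1]$ enforced in Section~\ref{sec:poly-MML1}.
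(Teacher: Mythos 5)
Your proof is correct and takes essentially the same route as the paper: the paper obtains this bound precisely by applying the two-level analysis behind Theorem~\ref{thm:MML1-polytime} to the (MML1) reformulation of (DR), with the constant-time evaluation assumption replaced by the cost $F_f = \Oh(n\log(B/n))$ of evaluating $f$ in \eqref{eqn:def-f} via (SRA($x$)). Your bookkeeping---$F_g = \Oh(n^{3}\log(\gamma/n)F_f) = \Oh(n^{4}\log(\gamma/n)\log(B/n))$ for one evaluation of $g$, hence $\Oh(n^{3}\log(\gamma/n)F_g) = \Oh(n^{7}(\log(\gamma/n))^{2}\log(B/n))$ overall, with preprocessing dominated---is exactly the computation the paper leaves implicit in its one-line proof.
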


 To obtain a better time bound for (DR), we consider a different approach.
 The discussion in Section~\ref{sec:poly-MML1} shows that if we 
have an optimal solution $(d^\bullet, b^\bullet)$ of (DA), then the problem (DR) can be
reformulated as a problem without L1-distance constraint:
\[
 \begin{array}{l|lll}
\mbox{(DR-L)} & \mbox{Minimize }  & 
\displaystyle c(d, b)\\
& \mbox{subject to } & 
\displaystyle  d(N)+b(N) = D+ B,\\
& & 
\displaystyle  b(N) \le  B,\\
& & 
  d(P)+b(P) = \bar d(P)+ \bar b(P) + \gamma,\\
& &  
  d(N \setminus P)+b(N \setminus P) = 
\bar d(N \setminus P)+ \bar b(N \setminus P) - \gamma,
\\
& & \ell \le d + b \le u,\   d, b \in \Z^n_+,\\
& &  \bar{d} + \bar{b} -\gamma \1 \le    d + b
 \le \bar{d} + \bar{b} + \gamma \1,
 \end{array}
\]
where $P \subseteq N$ is a set given as
$P = \suppp((d^\bullet + b^\bullet) - (\bar{d}+\bar{b}))$.
 To solve the problem (DR-L) efficiently, we consider the
two problems (DR-L-A$(\alpha)$) and (DR-L-B$(\alpha)$)
with parameter $\alpha$:
\begin{align*}
& \hspace*{-7mm}
   \begin{array}{l|lll}
 \mbox{(DR-L-A$(\alpha)$)} & \mbox{Minimize }  & 
 \sum_{i \in P}c_i(d(i), b(i))\\
 & \mbox{subject to } & 
 \displaystyle  b(P) \le  \alpha,\\
 & & 
  d(P)+b(P) = \bar d(P)+ \bar b(P) + \gamma,\\
 & & \ell(i) \le d(i) + b(i) \le u(i),\ d(i), b(i) \in \Z_+\ (i \in P),\\
& &  \bar{d}(i) + \bar{b}(i) -\gamma \1 \le    d(i) + b(i)
 \le \bar{d}(i) + \bar{b}(i) + \gamma \1 \  (i \in P);
 \end{array}
\end{align*}
(DR-L-B$(\alpha)$) is defined similarly to (DR-L-A$(\alpha)$),
where $P$ is replaced with $N \setminus P$
and the first constraint $b(P) \le  \alpha$
is replaced with $b(N \setminus P) \le  B -\alpha$.
 The two problems above have (almost) the same structure
as the  problem (DA), and therefore can be solved in 
$\Oh(n \log n \log ((D+B)/n))$ time by Theorem \ref{thm:fast-da}.

 We denote by $\psi_{\rm A}(\alpha)$
(resp., $\psi_{\rm B}(\alpha)$) the optimal value of 
the problem (DR-L-A$(\alpha)$) (resp., (DR-L-B$(\alpha)$)).
 Then, it is not difficult to see that
the optimal value of the problem (DR-L) is given by
$\min_{0 \le \alpha \le B} [\psi_{\rm A}(\alpha) + \psi_{\rm B}(\alpha)]$.
 The next property shows that the minimum value of
$\psi_{\rm A}(\alpha) + \psi_{\rm B}(\alpha)$ can be computed by
binary search with respect to $\alpha$.

\begin{proposition}
\label{prop:DA-convex-B}
 The values $\psi_{\rm A}(\alpha)$ and $\psi_{\rm B}(\alpha)$
are convex functions  in $\alpha \in [0, B]$.
\end{proposition}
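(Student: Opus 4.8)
The plan is to prove the statement for $\psi_{\rm A}$ and then deduce it for $\psi_{\rm B}$ by an affine change of the parameter. The key observation is that in (DR-L-A$(\alpha)$) the aggregate $d(P)+b(P)$ is pinned to the constant $C:=\bar d(P)+\bar b(P)+\gamma$, so the parameter $\alpha$ only bounds the aggregate bike count $b(P)$. Accordingly I would introduce the \emph{exact} value function
\[
\Psi(\beta)=\min\Bigl\{\textstyle\sum_{i\in P}c_i(d(i),b(i))\ \Bigm|\ d(P)+b(P)=C,\ b(P)=\beta,\ \text{box constraints of (DR-L-A}(\alpha))\Bigr\},
\]
so that $\psi_{\rm A}(\alpha)=\min\{\Psi(\beta)\mid\beta\le\alpha\}$. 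Since the prefix-minimum of a convex function is again convex (it agrees with $\Psi$ up to its minimizer and is constant thereafter), the whole claim reduces to showing that $\Psi$ is convex on the integers, i.e.\ $\Psi(\beta-1)+\Psi(\beta+1)\ge 2\Psi(\beta)$. For $\psi_{\rm B}$ the same reduction applies with the bound $B-\alpha$ in place of $\alpha$, and convexity in $\alpha$ then follows because $\alpha\mapsto B-\alpha$ is affine and the corresponding prefix-minimum is convex and nonincreasing.

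To establish convexity of $\Psi$, first note that the separable objective $C_P(d,b)=\sum_{i\in P}c_i(d(i),b(i))$ is \Mnat-convex: each $c_i$ is \Mnat-convex by Proposition~\ref{prop:f2-Mnat-multi}, and a direct sum of \Mnat-convex functions on disjoint variable blocks again satisfies (\Mnat-EXC), since the exchange can be performed block by block. The clean route is then to aggregate $C_P$ onto the two coordinates $(s,t)=(d(P),b(P))$, obtaining $\Phi(s,t)=\min\{C_P(d,b)\mid d(P)=s,\ b(P)=t\}$; aggregation of variables preserves \Mnat-convexity (a standard closure property of discrete convex analysis, \cite{Murota03book}), so $\Phi$ is \Mnat-convex on $\Z^2$. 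Restricting an \Mnat-convex function to the constant-sum line $\{s+t=C\}$ yields a convex function of $t$, and since $\Psi(\beta)=\Phi(C-\beta,\beta)$ this gives exactly the desired convexity.

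The part that requires genuine work, and which I expect to be the main obstacle, is the aggregation step (equivalently, the inequality $\Psi(\beta-1)+\Psi(\beta+1)\ge 2\Psi(\beta)$), so it is worth sketching a self-contained exchange argument. Fix $\beta$ and take optimal solutions $p^{+}=(d^{+},b^{+})$ for $\Psi(\beta+1)$ and $p^{-}=(d^{-},b^{-})$ for $\Psi(\beta-1)$, chosen among all such optimal pairs so as to minimize $\|p^{+}-p^{-}\|_1$. Both lie in $\dom C_P$ with equal coordinate sum $d^{\pm}(P)+b^{\pm}(P)=C$, so Theorem~\ref{thm:Mnat-convex-ineq}(i) applies. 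Because $\sum_{i\in P}(b^{+}(i)-b^{-}(i))=2>0$, some bike-coordinate $i$ lies in $\suppp(p^{+}-p^{-})$; applying the theorem at $i$ yields $j\in\suppm(p^{+}-p^{-})$ with $C_P(p^{+})+C_P(p^{-})\ge C_P(p^{+}-\chi_i+\chi_j)+C_P(p^{-}+\chi_i-\chi_j)$. If $j$ is a dock-coordinate (the move converts one bike into one dock in $p^{+}$ and vice versa in $p^{-}$), both new vectors retain $d(P)+b(P)=C$ and now have $b(P)=\beta$, so their costs are each at least $\Psi(\beta)$ and the inequality gives the claim; here one may also invoke Proposition~\ref{prop:2multimod} when $i,j$ share a station. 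The delicate point is excluding the case where $j$ is again a bike-coordinate, which leaves both bike totals unchanged: I would rule it out using the minimality of $\|p^{+}-p^{-}\|_1$ (the inequality then forces equality, producing two optimal solutions strictly closer in $L_1$), taking care of the borderline configuration in which the relevant coordinate gaps equal $1$ by, if necessary, applying the symmetric exchange at a dock-coordinate of $\suppp(p^{-}-p^{+})$. Verifying that one of these choices always yields a bike-to-dock exchange is precisely the crux of the proof.
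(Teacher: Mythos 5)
Your main route is correct, and it is genuinely different from the paper's. The paper never aggregates: it proves discrete midpoint convexity directly for the analogous parametric problem (DA$[\alpha]$), by taking optimal solutions for parameters $\alpha$ and $\alpha+2$ at minimum $L_1$-distance and converting them into two feasible solutions for $\alpha+1$ via multimodularity (Proposition \ref{prop:2multimod}) --- either at a single station where the bike count went up and the dock count went down (part (i)), or, when no such station exists, at two distinct stations using part (ii), the case hypothesis then supplying both the needed inequalities and feasibility. You instead isolate the exact-value function $\Psi$, handle the inequality constraint by a prefix-minimum (and $\psi_{\rm B}$ by the affine substitution $\alpha \mapsto B-\alpha$), and reduce everything to the claim that aggregating the separable objective onto the two totals $(d(P),b(P))$ preserves \Mnat-convexity. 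That closure property is citable (it is a special case of induction of an \Mnat-convex function through a network, cf.~\cite{Murota03book}), so with that citation your proof closes; what it buys is a cleaner structural statement, while the paper's argument is elementary and self-contained, using only Proposition \ref{prop:2multimod}.

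Two caveats. First, a small unaddressed point: the per-station constraints $\ell(i) \le d(i)+b(i) \le u(i)$ and the $\gamma$-band are sum constraints, not box constraints in $(d(i),b(i))$, so before taking direct sums you must check that restricting $c_i$ to $\{(d,b) \in \Z_+^2 \mid L(i) \le d+b \le U(i)\}$ keeps \Mnat-convexity; this is true (when (\Mnat-EXC) returns $j=0$ the two coordinate sums move toward each other, so the band is preserved, and when the sums are equal Theorem \ref{thm:Mnat-convex-ineq}(i) supplies a nonzero $j$), but it needs saying. Second, and more importantly, your fallback self-contained exchange argument has exactly the gap you flag, and it is a real one: when the exchange partner $j$ is again a bike coordinate and both coordinate gaps equal $1$, the $L_1$-distance does not drop, so minimality yields no contradiction, and the symmetric attempt from a dock coordinate of $\suppp(p^{-}-p^{+})$ can fail in the same way; that ``one of the two choices always works'' is precisely what is not established. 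The paper's case split is the missing idea: instead of invoking the generic exchange axiom in $2|P|$ dimensions, ask whether some station has bike count up and dock count down between $p^{+}$ and $p^{-}$. If yes, Proposition \ref{prop:2multimod}(i) at that station gives the bike-to-dock transfer; if no, then every station in $\suppp(b^{+}-b^{-})$ has a strictly larger station total in $p^{+}$ and every station in $\suppm(d^{+}-d^{-})$ a strictly smaller one, so two applications of Proposition \ref{prop:2multimod}(ii), at two necessarily distinct stations, produce the required pair for $\Psi(\beta)$, with feasibility (nonnegativity, boxes, band) automatic from these strict inequalities. If you want your sketch to stand on its own, replace the minimal-distance argument by this station-level dichotomy.
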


 Since the binary search terminates in $\Oh(\log B)$ iterations
and each iteration requires 
$\Oh(n \log n \log ((D+B)/n))$ time by Theorem \ref{thm:fast-da},
we obtain the following time bound.

\begin{theorem}
 The problem  {\rm (DR)} can be solved in 
$\Oh(n \log n \log ((D+B)/n) \log B)$ time.
\end{theorem}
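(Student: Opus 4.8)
The plan is to assemble the ingredients developed in this section into a single algorithm and to account for its running time. First I would solve the auxiliary problem (DA) by the proximity-scaling algorithm of Theorem \ref{thm:fast-da}, obtaining an optimal solution $(d^\bullet, b^\bullet)$ in $\Oh(n \log n \log((D+B)/n))$ time, and set $P = \suppp((d^\bullet + b^\bullet) - (\bar d + \bar b))$. By Lemma \ref{lem:MML-MML1} together with the reduction of (MML1) to (MM-L), replacing the L1-distance constraint by the linear constraints determined by $P$ turns (DR) into the equivalent problem (DR-L), which no longer contains an L1-distance constraint. This single invocation of Theorem \ref{thm:fast-da} is a one-time cost that will be dominated by the main loop.

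The key structural observation is that (DR-L) almost decomposes across the partition $\{P, N \setminus P\}$: the objective $c(d,b) = \sum_{i} c_i(d(i), b(i))$ is separable, and every constraint except $b(N) \le B$ acts independently on the blocks indexed by $P$ and by $N \setminus P$. The only coupling is through $b(N) = b(P) + b(N \setminus P) \le B$. I would therefore introduce the parameter $\alpha$ splitting this single inequality into $b(P) \le \alpha$ and $b(N \setminus P) \le B - \alpha$; for each fixed $\alpha$ the problem (DR-L) breaks into the two independent subproblems (DR-L-A$(\alpha)$) and (DR-L-B$(\alpha)$). Because each of these shares essentially the structure of (DA), Theorem \ref{thm:fast-da} applies and solves each in $\Oh(n \log n \log((D+B)/n))$ time. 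Ranging over all admissible $\alpha$ recovers the optimum, so the optimal value of (DR-L) equals $\min_{0 \le \alpha \le B}[\psi_{\rm A}(\alpha) + \psi_{\rm B}(\alpha)]$.

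It remains to minimize $\psi_{\rm A}(\alpha) + \psi_{\rm B}(\alpha)$ over the integers $\alpha \in [0, B]$ efficiently. By Proposition \ref{prop:DA-convex-B} both $\psi_{\rm A}$ and $\psi_{\rm B}$ are convex in $\alpha$, hence so is their sum; a convex univariate function over an integer interval of length $B$ can be minimized by binary search on its discrete difference in $\Oh(\log B)$ steps. Each step evaluates $\psi_{\rm A} + \psi_{\rm B}$ at a constant number of values of $\alpha$, and each such evaluation costs $\Oh(n \log n \log((D+B)/n))$ by Theorem \ref{thm:fast-da}. Multiplying gives the claimed bound $\Oh(n \log n \log((D+B)/n) \log B)$, which also dominates the one-time cost of solving (DA). From the optimal $\alpha$ the optimal solutions of the two subproblems combine into an optimal $(d,b)$ for (DR-L), and hence for (DR).

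The step I expect to require the most care is not the running-time bookkeeping but the correctness of the decomposition: one must verify that the linear constraints introduced via $P$ make (DR-L) genuinely separable except for $b(N) \le B$, and that scanning $\alpha$ over $[0,B]$ together with the binary search justified by Proposition \ref{prop:DA-convex-B} returns a global optimum rather than merely a local one. The convexity of $\psi_{\rm A}$ and $\psi_{\rm B}$ is precisely what makes the binary search valid, so this theorem ultimately rests on Proposition \ref{prop:DA-convex-B} as its substantive ingredient.
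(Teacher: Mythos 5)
Your proposal is correct and follows essentially the same route as the paper: solve (DA) by the proximity-scaling algorithm of Theorem \ref{thm:fast-da} to obtain $P$, reformulate (DR) as (DR-L), decompose via the parameter $\alpha$ into (DR-L-A$(\alpha)$) and (DR-L-B$(\alpha)$), and minimize $\psi_{\rm A}(\alpha)+\psi_{\rm B}(\alpha)$ by binary search justified by the convexity in Proposition \ref{prop:DA-convex-B}, with each evaluation costing $\Oh(n \log n \log((D+B)/n))$ time. Your time accounting and your identification of Proposition \ref{prop:DA-convex-B} as the load-bearing ingredient both match the paper exactly.
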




\newpage

\appendix

\section{Appendix: Proofs}

\makeatletter
\edef\thetheorem{\expandafter\noexpand\thesection\@thmcountersep\@thmcounter{theorem}}
\makeatother

\subsection{Proof of Theorem \ref{thm:Mnat-convex-laminar}}

 By Theorem \ref{thm:M-char-exists} (i), it suffices to show that the following condition
holds for every $x', x'' \in \dom f$ with $x' \ne x''$:
\begin{align}
& \exists i \in \suppp(x'-x''),\ \exists j \in \suppm(x'-x''):
\notag \\
& \qquad
  f(x') + f(x'') \ge f(x' - \chi_i + \chi_j) + f(x'' + \chi_i - \chi_j).
  \label{eqn:DR-Mconv4}
\end{align}

 For $x \in \dom f$, we denote
\[
 S(x)=\{ (d,b) \in \Z^n_+ \times \Z^n_+ \mid y+z=x,\ b(N) \le B \}.
\]
 Let $x', x'' \in \dom f$ be distinct vectors, and
let $(d',b') \in S(x')$  (resp., $(d'',b'') \in S(x'')$) be a pair 
of vectors such that
$f(x') = c(d',b')$ (resp.,  $f(x'') = c(d'',b'')$).
 We denote
\begin{align*}
&  N^+ = \suppp(x'-x''), \quad
 N^- = \suppm(x'-x''), \quad
 N^0 = N \setminus (N^+ \cup N^-).
\end{align*}
 In the following, we consider only the case with
$b'(N)=b''(N)=B$ since the remaining case can be proved
similarly and more easily.
 Note that this assumption and the equation $x'(N)=x''(N)$
implies $d'(N)=d''(N)$.

 We first show by using Proposition \ref{prop:2multimod}
that the condition \eqref{eqn:DR-Mconv4}  holds
if at least one of the following four conditions holds:
\begin{align*}
\mbox{(C1) } &  
N^+  \cap  \suppp(d' - d'')  \ne \emptyset, 
\ 
N^-  \cap  \suppm(d' - d'')  \ne \emptyset, 
\\
\mbox{(C2) } &  
N^+  \cap  \suppp(b' - b'')  \ne \emptyset, 
\ 
N^-  \cap  \suppm(b' - b'')  \ne \emptyset, 
\\
\mbox{(C3) } &  
N^+  \cap  \suppp(d' - d'')  \ne \emptyset, 
\ 
N^-  \cap  \suppm(b' - b'')  \ne \emptyset, 
\ 
N^0  \cap  \suppm(d' - d'')  \ne \emptyset, 
\\
\mbox{(C4) } &  
N^+  \cap  \suppp(b' - b'')  \ne \emptyset, 
\ 
N^-  \cap  \suppm(d' - d'')  \ne \emptyset, 
\ 
N^0  \cap  \suppm(b' - b'')  \ne \emptyset. 
\end{align*}

 In the following, we give a proof for only the case with (C3);
the proof for other cases are similar and omitted.
 Let $i,j, s \in N$ be distinct elements such that
\[
i \in  N^+  \cap  \suppp(d' - d''), \quad
j \in N^-  \cap  \suppm(b' - b''), \quad
s \in N^0  \cap  \suppm(d' - d'').
\]
 Note that the choice of $s$ implies $s \in \suppp(b' - b'')$.
 We define vectors 
$\tilde d', \tilde d'', \tilde b', \tilde b'', \tilde{x}', \tilde{x}'' \in \Z^n$ by
\begin{align*}
&  \tilde d' = d' - \chi_i + \chi_s,\ \tilde d'' = d'' + \chi_i - \chi_s,\ 
 \tilde b' = b' + \chi_j - \chi_s,\  \tilde b'' = b'' - \chi_j + \chi_s,\ 
\\
& \tilde x' =  \tilde d' + \tilde b'\ (=  x'  - \chi_i + \chi_j), \  
 \tilde x'' = \tilde d'' + \tilde b'' ( =x''  + \chi_i - \chi_j).
\end{align*}
 It is not difficult to see that
$\tilde x', \tilde x'' \in \dom f$,
$(\tilde d', \tilde b') \in S(\tilde x')$,
and $(\tilde d'', \tilde b'') \in S(\tilde x'')$ hold.
 Hence, we have
\begin{equation}
   \label{eqn:DR-Mconv5}
 f(\tilde x') \le c(\tilde d', \tilde b'), \qquad 
 f(\tilde x'') \le c(\tilde d'', \tilde b'').
\end{equation}
By the choice of $i, j, s \in N$, the following inequalities follow
from Proposition \ref{prop:2multimod}:
\begin{align}
& 
c_i(d'(i), {b}'(i)) + c_i(d''(i), {b}''(i))  
\notag\\
& \ge
c_i(d'(i)-1, {b}'(i)) + c_i(d''(i)+1, {b}''(i))  
= c_i(\tilde d'(i), \tilde {b}'(i)) + c_i(\tilde d''(i),\tilde  {b}''(i)),  
  \label{eqn:DR-Mconv1}
\\
& 
c_j(d'(j), {b}'(j)) + c_j(d''(j), {b}''(j))  
\notag\\
& \ge
c_j(d'(j), {b}'(j)+1) + c_j(d''(j), {b}''(j)-1)  
= c_j(\tilde d'(j), \tilde {b}'(j)) + c_j(\tilde d''(j),\tilde  {b}''(j)),  
  \label{eqn:DR-Mconv2}
\\
& 
c_s(d'(s), {b}'(s)) + c_s(d''(s), {b}''(s))  
\notag\\
& \ge
c_s(d'(s)+1, {b}'(s)-1) + c_s(d''(s)-1, {b}''(s)+1)  
= c_s(\tilde d'(s), \tilde {b}'(s)) + c_s(\tilde d''(s),\tilde  {b}''(s)).  
  \label{eqn:DR-Mconv3}
\end{align}
 From these inequalities and \eqref{eqn:DR-Mconv5} follows that
\begin{align*}
  f(x') + f(x'') 
& = c( d',  b') + c( d'',  b'') \\
& \ge  c(\tilde d', \tilde b') + 
 c(\tilde d'', \tilde b'')\\
& \ge f(\tilde x') + f(\tilde x'') 
= f(x' - \chi_i + \chi_j) + f(x'' + \chi_i - \chi_j). 
\end{align*}
 This shows that the inequality \eqref{eqn:DR-Mconv4} holds.

 To conclude the proof, we show that
at least one of the four conditions (C1)--(C4) holds.
 Assume, to the contrary, that neither of the four conditions
holds.
 Since $x'(N)=x''(N)$ and $x' \ne x''$,
we have $N^+ \ne \emptyset$,
which implies at least one of 
$N^+ \cap \suppp(d'-d'') \ne \emptyset$ 
and $N^+ \cap \suppp(b'-b'') \ne \emptyset$ holds;
we may assume that the former holds.
 Since (C1)  does not hold, we have
$N^-  \cap  \suppm(d' - d'') = \emptyset$,
which implies that $N^-  \subseteq  \suppm(b' - b'')$.
 Since (C2)  does not hold, we have
$N^+  \cap  \suppp(b' - b'') = \emptyset$,
which implies that $N^+  \subseteq  \suppp(d' - d'')$.
 Since (C3)  does not hold, we have
$N^0  \cap  \suppm(d' - d'')  = \emptyset$.
 Hence, we have 
\[
 d'(N^+) > d''(N^+), \ d'(N^-) \ge d''(N^-), \ 
d'(N^0) \ge d''(N^0),
\]
implying that $d'(N) > d''(N)$, a contradiction
to the equation $d'(N)=d''(N)$. 
 This concludes the proof.

\subsection{Proof of Theorem \ref{thm:norm-opt-main}}
\label{sec:proof-Th3.3}

 We prove  Theorem \ref{thm:norm-opt-main} in this section.
 For this, we show some technical lemmas.

\begin{lemma}
\label{lem:suppp-suppm} 
 Let $y, \tilde{y} \in \Z^n$ be distinct vectors satisfying $y(N)=\tilde{y}(N)$.
 If $\|y - \mycenter\|_1 \le \|\tilde{y} -\mycenter\|_1$,
then we have 
 $\tilde{y}(i) > \mycenter(i)$ for some $i \in \suppp(\tilde{y} - y)$ 
 or $\tilde{y}(j) < \mycenter(j)$ for some $j \in \suppm(\tilde{y} - y)$  (or both).
\end{lemma}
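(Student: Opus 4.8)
The plan is to argue by contraposition (equivalently, by contradiction). Suppose the conclusion fails; that is, assume that
\[
\tilde{y}(i) \le \mycenter(i) \quad \text{for every } i \in \suppp(\tilde{y}-y), \qquad
\tilde{y}(j) \ge \mycenter(j) \quad \text{for every } j \in \suppm(\tilde{y}-y).
\]
I would then show that these two conditions force $\|\tilde{y}-\mycenter\|_1 < \|y-\mycenter\|_1$, which contradicts the hypothesis $\|y-\mycenter\|_1 \le \|\tilde{y}-\mycenter\|_1$.

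The heart of the argument is a coordinatewise comparison of $|\tilde{y}(i)-\mycenter(i)|$ with $|y(i)-\mycenter(i)|$. First I would handle the coordinates $i$ with $\tilde{y}(i) > y(i)$: under the negated conclusion such an $i$ satisfies $y(i) < \tilde{y}(i) \le \mycenter(i)$, so both $y(i)$ and $\tilde{y}(i)$ lie at or below $\mycenter(i)$ and hence $|\tilde{y}(i)-\mycenter(i)| = \mycenter(i)-\tilde{y}(i) < \mycenter(i)-y(i) = |y(i)-\mycenter(i)|$, a strict decrease. Symmetrically, for a coordinate $j$ with $\tilde{y}(j) < y(j)$ the negated conclusion gives $\mycenter(j) \le \tilde{y}(j) < y(j)$, whence $|\tilde{y}(j)-\mycenter(j)| = \tilde{y}(j)-\mycenter(j) < y(j)-\mycenter(j) = |y(j)-\mycenter(j)|$, again strictly smaller. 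For the remaining coordinates, where $\tilde{y}(i) = y(i)$, the two distances are trivially equal.

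Summing these coordinatewise inequalities yields $\|\tilde{y}-\mycenter\|_1 \le \|y-\mycenter\|_1$, and since $y \ne \tilde{y}$ at least one coordinate lies in $\suppp(\tilde{y}-y) \cup \suppm(\tilde{y}-y)$ and contributes a strict inequality, so in fact $\|\tilde{y}-\mycenter\|_1 < \|y-\mycenter\|_1$, the desired contradiction. I do not expect a serious obstacle here: the only point requiring care is to correctly locate each moved coordinate and its value relative to $\mycenter(i)$ so that the absolute-value comparison becomes strict at every moved coordinate. Note that the hypothesis $y(N)=\tilde{y}(N)$ is not actually needed for this coordinatewise argument; it is presumably recorded only because the lemma is applied in that setting.
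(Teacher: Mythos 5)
Your proposal is correct and is essentially the same argument as the paper's own proof: both negate the conclusion, observe that every coordinate in $\suppp(\tilde{y}-y)\cup\suppm(\tilde{y}-y)$ then contributes a strictly smaller term to $\|\tilde{y}-\mycenter\|_1$ than to $\|y-\mycenter\|_1$, and sum to contradict $\|y-\mycenter\|_1 \le \|\tilde{y}-\mycenter\|_1$. Your side remark is also accurate: the hypothesis $y(N)=\tilde{y}(N)$ is not used in this argument (only $y \ne \tilde{y}$ is needed), and the paper's proof does not use it either.
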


\begin{proof}
 We prove the statement by contradiction.
 Assume, to the contrary, that
 $\tilde{y}(i) \le \mycenter(i)$ for all $i \in \suppp(\tilde{y} - y)$ 
and  $\tilde{y}(j) \ge \mycenter(j)$ for all $j \in \suppm(\tilde{y} - y)$.
 Then, it holds that
 \begin{align*}
 \|\tilde{y}-\mycenter\|_1 - \|y-\mycenter\|_1 
 & = \sum_{i \in \suppp(\tilde{y} - y)} (|\tilde{y}(i)-\mycenter(i)| - |y(i)-\mycenter(i)|)\\
 & \quad +\sum_{j \in  \suppm(\tilde{y} - y)} (|\tilde{y}(j)-\mycenter(j)| -
  |y(j)-\mycenter(j)|)\\
 & = \sum_{i \in \suppp(\tilde{y} - y)} 
 [(\mycenter(i) - \tilde{y}(i) ) - (\mycenter(i) - y(i))]\\
 & \quad +\sum_{j \in  \suppm(\tilde{y} - y)} [(\tilde{y}(j)-\mycenter(j)) -
  (y(j)-\mycenter(j)|)]\\
 & = 
\sum_{i \in \suppp(\tilde{y} - y)} 
 (- \tilde{y}(i) + y(i))
+\sum_{j \in \suppm(\tilde{y} - y)} (\tilde{y}(j)-y(j)) < 0,
 \end{align*}
a contradiction to the inequality $\|y - \mycenter\|_1 \le \|\tilde{y} -\mycenter\|_1$.
\end{proof}

\begin{lemma}
\label{lem:norm-sepconv}
 Let $x, y, z \in \Z^n$, $i \in \suppp(x-y)$, and $j \in\suppm(x-y)$.
 Then, we have
\[
\|x-z\|_1 + \|y - z\|_1
\ge \|(x  - \chi_i + \chi_j) -z \|_1 + \|(y + \chi_i - \chi_j) - z\|_1.
\]
\end{lemma}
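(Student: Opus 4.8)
The statement to prove is Lemma~\ref{lem:norm-sepconv}: for $x, y, z \in \Z^n$, $i \in \suppp(x-y)$, and $j \in \suppm(x-y)$,
\[
\|x-z\|_1 + \|y - z\|_1 \ge \|(x - \chi_i + \chi_j) - z\|_1 + \|(y + \chi_i - \chi_j) - z\|_1.
\]

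The plan is to exploit the separability of the $\ell_1$-norm and reduce everything to a coordinate-by-coordinate comparison. Writing each norm as $\sum_{k \in N} |x(k) - z(k)|$, I observe that the only coordinates in which $x - \chi_i + \chi_j$ differs from $x$ (and $y + \chi_i - \chi_j$ from $y$) are $k = i$ and $k = j$. In every other coordinate the contributions on both sides of the inequality are identical and cancel. So it suffices to verify the inequality restricted to the two coordinates $i$ and $j$ separately, i.e. to show
\[
|x(i)-z(i)| + |y(i)-z(i)| \ge |x(i)-1-z(i)| + |y(i)+1-z(i)|
\]
and the analogous inequality at coordinate $j$ with the signs reversed.

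The key facts I would use are the sign conditions from the hypotheses: $i \in \suppp(x-y)$ means $x(i) > y(i)$, so $x(i) \ge y(i)+1$; and $j \in \suppm(x-y)$ means $x(j) < y(j)$, so $x(j) \le y(j)-1$. At coordinate $i$, I am moving $x(i)$ down by one and $y(i)$ up by one, and since $x(i)$ starts strictly above $y(i)$, this operation brings the pair $\{x(i), y(i)\}$ closer together (or keeps them at the same total spread). The elementary one-dimensional fact I would invoke is that for reals $a > b$ and any $c$, $|a-c| + |b-c| \ge |a-1-c| + |b+1-c|$, which holds because moving two points toward each other cannot increase the sum of their distances to a fixed third point $c$. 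This is a standard convexity/contraction statement that follows by checking the few cases for the location of $c$ relative to $a$, $b$, and the shifted points; I would phrase it cleanly as a single-variable lemma.

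The main obstacle is purely bookkeeping rather than conceptual: I must handle coordinates $i$ and $j$ correctly given that the perturbation $-\chi_i + \chi_j$ decreases the $i$-th coordinate but increases the $j$-th, and the sign of $x(\cdot)-y(\cdot)$ flips between $i$ and $j$. At coordinate $j$ the roles reverse—there $x(j) < y(j)$, and the operation raises $x(j)$ and lowers $y(j)$, again a mutual-approach move—so the same elementary lemma applies after relabeling. Summing the two coordinatewise inequalities and adding back the untouched coordinates gives the claim. I expect no genuinely hard step; the care required is only to verify the one-dimensional contraction inequality under the correct ordering hypotheses at each of the two coordinates.
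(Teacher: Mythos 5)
Your proposal is correct and follows essentially the same route as the paper's own proof: both exploit the separability of the L1-norm, note that only coordinates $i$ and $j$ change, and apply at each of them the univariate inequality $\phi(\eta)+\phi(\zeta)\ge\phi(\eta+1)+\phi(\zeta-1)$ for $\eta<\zeta$ with the convex function $\phi(t)=|t-z(k)|$ (your ``mutual-approach'' contraction fact is exactly this convexity statement). No gaps.
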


\begin{proof}
 For a univariate convex function $\phi: \R \to \R$
and integers $\eta, \zeta$ with $\eta < \zeta$, it holds that
\[
 \phi(\eta) +  \phi(\zeta) \ge
 \phi(\eta+1) +  \phi(\zeta-1).
\]
 We have $\|x-z\|_1 = \sum_{i=1}^n |x_i - z_i|$
and each term $|x_i - z_i|$ is a univariate convex function
in $x_i$, the claim follows.
\end{proof}

 We say that a sequence $y_0, y_1, \ldots, y_h \in \dom f$ 
of vectors is \textit{monotone}
if $\|y_k - y_0\|_1 = 2k$ holds for $k=0,1,\ldots, h$. 
 This condition can be rewritten as follows:
\[
\begin{array}{ll}
 \mbox{for }k=0,1,\ldots, h-1,\ \mbox{ it holds that }
 y_{k+1} = y_k - \chi_i + \chi_j \\
 \mbox{ for some }
 i \in \suppp(y_k - y_h) \mbox{ and }j \in \suppm(y_k - y_h).
\end{array}
\]
 Recall that by the definition of $\tau$, 
every optimal solution of the problem {\rm (MML1$(\tau)$)} 
is a minimizer of $f$.

\begin{lemma}
\label{lem:1-5}
 Let $y \in \dom f$ be a vector with $\|y - \mycenter\|_1 <2\tau$,
 and $x^\bullet \in M_\tau$ be a vector minimizing the value $\|x^\bullet - y\|_1$.
 Then, there exists a monotone sequence $y_0, y_1, \ldots, y_h \in \dom f$ 
with $h = (1/2)\|y - x^\bullet\|_1$ such that
$y_0 = y$, $y_h = x^\bullet$, and 
$f(y_0) > f(y_1) > \cdots > f(y_h)$.
\end{lemma}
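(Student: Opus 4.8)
The plan is to argue by induction on $h = (1/2)\|y - x^\bullet\|_1$, reducing the whole statement to the construction of a single good ``first step''. Throughout write $D^+ = \suppp(y - x^\bullet)$ and $D^- = \suppm(y - x^\bullet)$, so that a step $y_1 = y - \chi_i + \chi_j$ with $i \in D^+$, $j \in D^-$ is precisely a move that decreases the $L_1$-distance to $x^\bullet$ by $2$ (a step along a geodesic toward $x^\bullet$). Once I produce such a $y_1 \in \dom f$ with $f(y_1) < f(y)$ and with $x^\bullet$ again a nearest minimizer to $y_1$ in $M_\tau$, the induction hypothesis applied to $(y_1, x^\bullet)$ (parameter $h-1$) yields a monotone, strictly $f$-decreasing sequence $y_1, \dots, y_h = x^\bullet$; prepending $y_0 = y$ finishes the argument, since the first step reduces $\|\cdot - x^\bullet\|_1$ by $2$, so the concatenation is again a geodesic (giving $\|y_k - y_0\|_1 = 2k$), and $f(y_0) > f(y_1)$ chains with the strict descent from the hypothesis. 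The preservation ``$x^\bullet$ nearest to $y_1$ in $M_\tau$'' is immediate from the triangle inequality: for every $x^* \in M_\tau$ one has $\|x^* - y_1\|_1 \ge \|x^* - y\|_1 - 2 \ge \|x^\bullet - y\|_1 - 2 = \|x^\bullet - y_1\|_1$.

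It remains to build the first step, and I first record that $y$ is not a minimizer: by the definition of $\tau$ all minimizers of $f$ lie at $L_1$-distance at least $2\tau$ from $\mycenter$, while $\|y - \mycenter\|_1 < 2\tau$; in particular $\|x^\bullet - \mycenter\|_1 = 2\tau$. A non-increasing geodesic step exists by the exchange axiom: applying (M-EXC) to the pair $(y, x^\bullet)$ (both in $\dom f$, with equal coordinate sums) gives, for a chosen $i \in D^+$, some $j \in D^-$ with $f(y) + f(x^\bullet) \ge f(y - \chi_i + \chi_j) + f(x^\bullet + \chi_i - \chi_j)$; since $x^\bullet$ is a minimizer the last term is at least $f(x^\bullet)$, whence $f(y - \chi_i + \chi_j) \le f(y)$. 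For strictness I argue by contradiction: if the step were flat, the inequality forces $f(x^\bullet + \chi_i - \chi_j) = f(x^\bullet)$, so the partner $x^\bullet + \chi_i - \chi_j$ is itself a minimizer, and it is strictly closer to $y$ than $x^\bullet$ (its $i$- and $j$-coordinates move toward $y$). Using Lemma~\ref{lem:norm-sepconv} with $z = \mycenter$ to control the center-distance of this partner, one argues it lies in $M_\tau$, contradicting the choice of $x^\bullet$ as a nearest minimizer to $y$ within $M_\tau$. Hence a strictly decreasing geodesic step is available.

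The delicate point -- and the step I expect to be the main obstacle -- is to choose the first step so that the induction can actually be re-entered, i.e.\ so that $y_1$ stays strictly inside the ball $\{x : \|x - \mycenter\|_1 < 2\tau\}$ whenever $y_1 \ne x^\bullet$; this is what keeps $y_1$ off the minimizer set and preserves the standing hypothesis. A single geodesic step changes $\|\cdot - \mycenter\|_1$ by at most $2$, so the only dangerous situation is $\|y - \mycenter\|_1 = 2\tau - 2$ paired with a step that moves away from $\mycenter$. Here Lemma~\ref{lem:suppp-suppm} (comparing $y$ with $x^\bullet$, legitimate since $\|y - \mycenter\|_1 \le \|x^\bullet - \mycenter\|_1$) supplies an exchange coordinate on the correct side of $\mycenter$, which I combine with Lemma~\ref{lem:norm-sepconv} to select $i \in D^+$, $j \in D^-$ realizing a strict descent while not increasing the center-distance, so that $\|y_1 - \mycenter\|_1 \le \|y - \mycenter\|_1 < 2\tau$; the residual boundary case $\|y_1 - \mycenter\|_1 = 2\tau$ with $y_1 \ne x^\bullet$ is ruled out because such a $y_1$ would be a minimizer in $M_\tau$ nearer to $y$ than $x^\bullet$.

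Finally, I would note a cleaner packaging that removes the center-distance bookkeeping entirely: prove the strengthened statement in which $x^\bullet$ is taken as a nearest minimizer to $y$ among \emph{all} minimizers, not merely within $M_\tau$. Then the strictness argument of the second paragraph needs no appeal to $M_\tau$ or to Lemma~\ref{lem:norm-sepconv} -- a flat step directly produces a minimizer closer to $y$, a contradiction -- and the recursion is maintained purely by the triangle inequality, since both ``$x^\bullet$ nearest to $y_1$ among all minimizers'' and ``$y_1 \notin \arg\min f$ for $y_1 \ne x^\bullet$'' follow from distances alone. In this route the whole burden shifts to a single reduction: showing that under $\|y - \mycenter\|_1 < 2\tau$ a nearest minimizer to $y$ inside $M_\tau$ is in fact nearest among all minimizers, which is exactly where the hypothesis $\|y - \mycenter\|_1 < 2\tau$ and Lemmas~\ref{lem:suppp-suppm}--\ref{lem:norm-sepconv} enter.
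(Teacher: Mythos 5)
Your outline reproduces the paper's strategy---induction on $h$, one strictly descending geodesic step obtained from (M-EXC), with Lemmas~\ref{lem:suppp-suppm} and \ref{lem:norm-sepconv} and the nearest-minimizer property supplying strictness---and your triangle-inequality argument that $x^\bullet$ remains nearest in $M_\tau$ after a step is correct (the paper leaves this implicit). The genuine gap is in the strictness step, and it comes from inverting the paper's order of quantifiers. You fix an \emph{arbitrary} $i \in D^+$, let (M-EXC) hand you $j$, and then claim Lemma~\ref{lem:norm-sepconv} forces the flat-case partner $x' = x^\bullet + \chi_i - \chi_j$ into $M_\tau$. It does not: that lemma bounds only the \emph{sum} $\|y_1 - \mycenter\|_1 + \|x' - \mycenter\|_1$, and since $\|y_1 - \mycenter\|_1$ may drop by $2$, the partner may land at center-distance $2\tau+2$, i.e., be a minimizer \emph{outside} $M_\tau$; it is then closer to $y$ than $x^\bullet$ without contradicting anything, because $x^\bullet$ was chosen nearest only \emph{within} $M_\tau$. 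This failure is realizable: take $n=4$, $\mycenter = \0$, $B = \{(t,-t,s,-s) \mid t\in\{0,1\},\ s\in\{2,3\}\}$ (an M-convex set), and $f(x) = \min_{z \in B}\|x-z\|_1$ on $\dom f = \{x \mid x(N)=0,\ \|x\|_\infty \le 10\}$, which is M-convex (it is the integer infimal convolution of the indicator of $B$ with $\|\cdot\|_1$ on the hyperplane, restricted to a box; cf.~\cite{Murota03book}). Then $\arg\min f = B$, $\tau = 2$, $M_\tau = \{(0,0,2,-2)\}$. For $y=(1,-1,0,0)$, $x^\bullet = (0,0,2,-2)$ we have $D^+=\{1,4\}$, $D^-=\{2,3\}$; choosing $i=1$, the only $j \in D^-$ satisfying the (M-EXC) inequality is $j=2$, which gives the \emph{flat} step $y_1 = \0$ (with $f(y_1)=f(y)=4$) and the partner $(1,-1,2,-2)$: a minimizer at center-distance $6$, closer to $y$ than $x^\bullet$, and not in $M_\tau$. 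So for this admissible choice of $i$ your argument produces neither strict descent nor a contradiction. The paper avoids exactly this by applying Lemma~\ref{lem:suppp-suppm} \emph{first}, to pick $i \in \suppp(x^\bullet-y)$ with $x^\bullet(i) > \mycenter(i)$ (or the symmetric case), and only \emph{then} invoking (M-EXC) with that $i$: this forces the partner's center-distance change to lie in $\{-2,0\}$, so the partner is either strictly inside the ball (hence not a minimizer, by the definition of $\tau$) or on the sphere (hence in $M_\tau$ if it were a minimizer). That ``choose $i$ before (M-EXC)'' ordering is the crux, and it is missing from your write-up; in the example it selects the good step $(1,-1,1,-1)$.

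Your re-entry paragraph also rests on two claims that do not hold. First, you cannot ``select $i \in D^+$, $j \in D^-$ realizing a strict descent while not increasing the center-distance'' of $y_1$: (M-EXC) gives no control over the partner coordinate, and the coordinate supplied by Lemma~\ref{lem:suppp-suppm} controls the center-distance of the \emph{partner} $x'$, not of $y_1$ (even with the paper's choice, $\|y_1-\mycenter\|_1$ can increase by $2$). Second, the fallback ``$\|y_1-\mycenter\|_1 = 2\tau$ with $y_1 \ne x^\bullet$ would make $y_1$ a minimizer in $M_\tau$'' is a non sequitur---a point on that sphere need not be optimal (in the example, $(1,-1,1,-1)$ is such a point). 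You are right that re-entry is the delicate point; the paper's own proof is silent about it, and in the instance above one can check by enumeration that \emph{every} monotone sequence from $y$ to this $x^\bullet$ contains a flat step, so no cleverer choice of steps rescues the induction for the nearest-in-$M_\tau$ target: the argument really does want $x^\bullet$ nearest among \emph{all} minimizers, as in your final paragraph. But the reduction you defer there is false for the same instance: the minimizer nearest to $y$ overall is $(1,-1,2,-2)$ (at distance $4$), while the nearest element of $M_\tau$ is $(0,0,2,-2)$ (at distance $6$); so ``nearest within $M_\tau$'' and ``nearest among all minimizers'' genuinely differ, and your cleaner packaging proves a different statement rather than the lemma as given.
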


\begin{proof}
 We prove the claim by induction on $h$.
 It suffices to show that
there exists some $i \in \suppp(y-x^\bullet)$ and $j \in \suppm(y-x^\bullet)$
such that $f(y - \chi_i +  \chi_j) < f(y)$
since $(1/2)\|(y - \chi_i +  \chi_j) - x^\bullet\|_1 = h-1$.

 Since $\|x^\bullet - \mycenter\|_1 = 2 \tau > \|y - \mycenter\|_1$, 
it follows from Lemma \ref{lem:suppp-suppm} that
 $x^\bullet(i) > \mycenter(i)$ for some $i \in \suppp(x^\bullet - y)$ 
 or $x^\bullet(j) < \mycenter(j)$ for some $j \in \suppm(x^\bullet - y)$  (or both);
we assume, without loss of generality, that the former holds.
 Then, the exchange property (M-EXC) of M-convex function $f$
applied to  
$x^\bullet, y$, and $i$ implies that there exists some
 $j \in \suppm(x^\bullet-y)$ such that
\begin{equation}
\label{eqn:lem1-5:1}
  f(x^\bullet) + f(y) \ge f(x^\bullet - \chi_i + \chi_j) + f(y + \chi_i - \chi_j).
\end{equation}
 Hence, if we have $f(x^\bullet) < f(x^\bullet - \chi_i + \chi_j)$,
then  (\ref{eqn:lem1-5:1}) implies 
the desired inequality $f(y - \chi_i +  \chi_j) < f(y)$.
 In the following, we prove $f(x^\bullet) < f(x^\bullet - \chi_i + \chi_j)$.

 By the choice of $i$, we have
$\|(x^\bullet - \chi_i + \chi_j) - \mycenter\|_1 - \|x^\bullet  - \mycenter\|_1 \in \{0, - 2\}$.
If
$\|(x^\bullet - \chi_i + \chi_j) - \mycenter\|_1 - \|x^\bullet  - \mycenter\|_1 =0$
then we have 
$f(x^\bullet) < f(x^\bullet - \chi_i + \chi_j)$
by the choice of $x^\bullet$ since $\|(x^\bullet - \chi_i + \chi_j) - y\|_1 < 
\|x^\bullet  - y\|_1$.
If $\|(x^\bullet - \chi_i + \chi_j) - \mycenter\|_1 - \|x^\bullet  - \mycenter\|_1 =-2$
then we have $\|(x^\bullet - \chi_i + \chi_j) - \mycenter\|_1 < 2\tau$
and therefore 
$f(x^\bullet) < f(x^\bullet - \chi_i + \chi_j)$ holds by the definition of $\tau$.
 Hence, we have $f(x^\bullet) < f(x^\bullet - \chi_i + \chi_j)$ in either case.
\end{proof}

 We now prove the claims (i), (ii), and (iii) of 
Theorem \ref{thm:norm-opt-main}  in turn.

\begin{proof}[Proof of Theorem \ref{thm:norm-opt-main} (i)]
 We first show that $\mu_k > \mu_{k+1}$ for each integer $k \in [\sigma, \tau-1]$.
 Let $y \in M_k$, and 
 $x^\bullet \in M_\tau$ be a vector that minimizes the value $\|x^\bullet - y\|_1$.
 Note that $x^\bullet \in \arg\min f$,
and by the induction hypothesis we have $\|y - \mycenter\|_1 = 2k$.
 By Lemma \ref{lem:1-5},
there exists a monotone sequence $y_0, y_1, \ldots, y_h \in \dom f$ 
with $h=\|x^\bullet - y\|_1$ such that $y_0 = y$, $y_h = x^\bullet$, and 
$\mu_k = f(y_0) > f(y_1) > \cdots > f(y_h)$.
 Since $\|y_{t+1} - \mycenter\|_1 - \|y_t - \mycenter\|_1 \in \{-2, 0,+2\}$
for every integer $t \in [0,h-1]$
and $\|y_h - \mycenter\|_1 = 2\tau > 2k \ge \|y_0 - \mycenter\|_1$,
there exists some integer $s \in [1, h]$ such that
$\|y_s - \mycenter\|_1 = 2(k+1)$;
such $s$ satisfies 
$\mu_{k+1} \le f(y_s) < f(y_0) = \mu_k$.

 The inclusion
$M_k \subseteq \{x \in \Z^n \mid \|x - \mycenter\|_1 = 2k \}$
follows from the inequality $\mu_{k} < \mu_{k-1}$ 
since $f(x) \ge \mu_{k-1} > \mu_k$ holds for every $x \in \dom f$
with $\|x- \mycenter\|_1 < 2k$.
\end{proof}

\begin{proof}[Proof of Theorem \ref{thm:norm-opt-main} (ii)]
 We fix $y \in M_k$, and 
 let $\tilde{y}$ be a vector in $M_{k+1}$ that minimizes $\|\tilde{y} - y\|_1$.
 By Lemma \ref{lem:suppp-suppm},  
it suffices to consider the following two cases:
\begin{quote}
 Case 1: 
$\suppp(\tilde{y} - y) \cap \suppp(\tilde{y} - \mycenter) \ne \emptyset$,
\\
 Case 2: 
$\suppm(\tilde{y} - y) \cap \suppm(\tilde{y} - \mycenter) \ne \emptyset$.
\end{quote}
 In the following we give a proof for Case 1 only since Case 2
can be proven in a similar way.

 Suppose that there exists some $i \in 
\suppp(\tilde{y} - y) \cap \suppp(\tilde{y} - \mycenter)$.
 By (M-EXC) applied to $\tilde{y}$ and $y$, there exists some
 $j \in \suppm(\tilde{y}-y)$ such that
 \begin{equation}
 \label{eqn:opt-adj-1}
  f(\tilde{y}) + f(y) \ge f(\tilde{y} - \chi_i + \chi_j) + f(y + \chi_i - \chi_j).
 \end{equation}
 Put $\tilde{z} = \tilde{y} - \chi_i + \chi_j$, $z= y + \chi_i - \chi_j$,
and 
\[
 \alpha = \|\tilde{z}-\mycenter\|_1 - \|\tilde{y}-\mycenter\|_1, \quad
\beta = \|z-\mycenter\|_1 - \|y-\mycenter\|_1.
\]
 Then, we have $\beta \in \{-2, 0, +2\}$
and $\alpha \in \{-2, 0\}$ since  $\tilde{y}(i) > \mycenter(i)$.

 Assume first that $\alpha= 0$ holds.
 By Lemma \ref{lem:norm-sepconv}, we have
\[
\alpha + \beta 
= \|\tilde{z}-\mycenter\|_1 + \|z-\mycenter\|_1
-  \|\tilde{y}-\mycenter\|_1 - \|y-\mycenter\|_1
\le 0.
\]
 This, together with $\alpha = 0$, implies $\beta \le 0$.
 Hence, it holds that $\|z -\mycenter\|_1 \le  \|y-\mycenter\|_1 = 2k$,
implying $f(z) \ge \mu_{\|z -\mycenter\|_1/2} \ge \mu_k$
by Theorem \ref{thm:norm-opt-main} (i).
  Since $\|\tilde{z}-\mycenter\|_1 = \|\tilde{y}-\mycenter\|_1 = 2(k+1)$,
we have $f(\tilde{z}) \ge \mu_{k+1}$.
 Combining these inequalities with  (\ref{eqn:opt-adj-1}),   we have
 \[
 \mu_{k+1}+\mu_k = 
  f(\tilde{y}) + f(y) \ge f(\tilde{z}) + f(z)
 \ge \mu_{k+1}+\mu_{k},
 \]
 from which follows that $f(\tilde{z}) = \mu_{k+1}$,
 a contradiction to the choice of $\tilde{y}$
 since $\|\tilde{z} - y\|_1 = \|\tilde{y} - y\|_1 -2$.
 This shows that $\alpha = 0$ cannot occur.
 Hence, we have $\alpha = -2$.

  Since $\alpha = -2$, 
we have $\|\tilde{z}-\mycenter\|_1 = \|\tilde{y}-\mycenter\|_1 - 2 = 2k$,
from which follows that $f(\tilde{z}) \ge \mu_{k}$.
 We also have
 $\|z -\mycenter\|_1 \le  \|y-\mycenter\|_1 + 2= 2(k+1)$,
 and therefore
 $f(z) \ge \mu_{\|z -\mycenter\|_1/2} \ge \mu_{k+1}$,
where the last inequality is 
by Theorem \ref{thm:norm-opt-main} (i).
 Combining these inequalities with  (\ref{eqn:opt-adj-1}), 
 we have
 \[
 \mu_{k+1}+\mu_k = 
  f(\tilde{y}) + f(y) \ge f(\tilde{z}) + f(z)
 \ge \mu_{k}+\mu_{k+1},
 \]
 from which follows that $f(z) = \mu_{k+1}$.
 This implies $\|z - \mycenter\|_1 = 2(k+1)$
since $\mu_{k-1} > \mu_k > \mu_{k+1}$ by 
Theorem \ref{thm:norm-opt-main} (i).
 Hence, we have $z = y + \chi_i - \chi_j \in M_{k+1}$,
$i \in N \setminus \suppm(y - \mycenter)$,
and $j \in N \setminus \suppp(y - \mycenter)$.
\end{proof}

\begin{proof}[Proof of Theorem \ref{thm:norm-opt-main}  (iii)]
 The proof below is quite similar to that for 
Theorem \ref{thm:norm-opt-main}  (ii).

 We fix $y' \in M_{k+1}$, and 
 let $y$ be a vector in $M_{k}$ that minimizes $\|y - y'\|_1$.
 By Lemma \ref{lem:suppp-suppm},  
it suffices to consider the following two cases:
\begin{quote}
 Case 1: 
$\suppp({y}' - y) \cap \suppp({y}' - \mycenter) \ne \emptyset$,
\\
 Case 2: 
$\suppm({y}' - y) \cap \suppm({y}' - \mycenter) \ne \emptyset$.
\end{quote}
 In the following we give a proof for Case 1 only since Case 2
can be proven in a similar way.

 Suppose that there exists some $i \in 
\suppp({y}' - y) \cap \suppp({y}' - \mycenter) \ne \emptyset$.
 By (M-EXC) applied to $y'$ and $y$, there exists some
$j \in \suppm(y'-y)$ such that
\begin{equation}
 \label{eqn:opt-adj-2}
  f(y') + f(y) \ge f(y' - \chi_i + \chi_j) + f(y + \chi_i - \chi_j).
\end{equation}
 Put $z' = y' - \chi_i + \chi_j$, $z= y + \chi_i - \chi_j$,
and 
\[
 \alpha = \|z'-\mycenter\|_1 - \|y'-\mycenter\|_1, \quad
\beta = \|z-\mycenter\|_1 - \|y-\mycenter\|_1.
\]
 Then, we have $\beta \in \{-2, 0, +2\}$
and $\alpha \in \{-2, 0\}$ since  ${y}'(i) > \mycenter(i)$.

 Assume first that $\alpha= 0$ holds.
 By Lemma \ref{lem:norm-sepconv}, we have
\[
\alpha + \beta 
= \|{z}'-\mycenter\|_1 + \|z-\mycenter\|_1
-  \|{y}'-\mycenter\|_1 - \|y-\mycenter\|_1
\le 0.
\]
 This, together with $\alpha = 0$, implies $\beta \le 0$.
 Hence, it holds that $\|z -\mycenter\|_1 \le  \|y-\mycenter\|_1 = 2k$,
 and therefore  $f(z) \ge \mu_{\|z -\mycenter\|_1/2} \ge \mu_k$
by Theorem \ref{thm:norm-opt-main} (i).
  Since $\|z'-\mycenter\|_1 = \|y'-\mycenter\|_1 = 2(k+1)$,
we have $f(z') \ge \mu_{k+1}$.
 Combining these inequalities with  (\ref{eqn:opt-adj-2}),   we have
 \[
 \mu_{k+1}+\mu_k = 
  f(y') + f(y) \ge f(z') + f(z)
 \ge \mu_{k+1}+\mu_{k},
 \]
 from which follows that $f(z') = \mu_{k+1}$,
 a contradiction to the choice of $y'$
 since $\|z' - y\|_1 = \|y' - y\|_1 -2$.
 This shows that $\alpha = 0$ cannot occur.
 Hence, we have $\alpha = -2$.

  Since $\alpha = -2$, 
we have $\|z'-\mycenter\|_1 = \|y'-\mycenter\|_1 - 2 = 2k$,
implying that $f(z') \ge \mu_{k}$.
 We also have
 $\|z -\mycenter\|_1 \le  \|y-\mycenter\|_1 + 2= 2(k+1)$,
 and therefore
 $f(z) \ge \mu_{\|z -\mycenter\|_1/2} \ge \mu_{k+1}$,
where the last inequality is 
by Theorem \ref{thm:norm-opt-main} (i).
 Combining these inequalities with  (\ref{eqn:opt-adj-2}), 
 we have
 \[
 \mu_{k+1}+\mu_k = 
  f(y') + f(y) \ge f(z') + f(z)
 \ge \mu_{k}+\mu_{k+1},
 \]
 from which follows that $f(z') = \mu_{k}$.
 Hence, we have $z' = y' - \chi_i + \chi_j \in M_{k}$,
$i \in \suppp(y - \mycenter)$
and $j \in \suppm(y - \mycenter)$.
\end{proof}

\subsection{Reverse Steepest Descent Algorithm for (MML1)}

 We can also consider another variant of steepest descent algorithm
that starts from a nearest minimizer $x^\bullet$ of $f$
and greedily approaches $\mycenter$.
 This algorithm finds an optimal solution of (MML1) faster than
{\sc SteepestDescentMML1}
if $\tau - \gamma$ is smaller than $\gamma-\sigma$.

\begin{flushleft}
 \textbf{Algorithm} {\sc ReverseSteepestDescentMML1} 
\\
 \textbf{Step 0:} 
 Compute the value $\tau$ in \eqref{eqn:def-sigma}
and a minimizer  $\xbullet$ of $f$ with 
$\|\xbullet - \mycenter \|_1 = 2\tau$.
\\
\noindent
\phantom{Step 0: }
Set $x_\tau: = x^\bullet$, and $k:=\tau-1$.
\\
 \textbf{Step 1:} 
If $k+1 = \gamma$,
then output $x_{k+1}$ and stop.\\
 \textbf{Step 2:} 
 Find $i_k, j_k \in N$ that minimizes $f(x_{k+1} - \chi_{i_k} + \chi_{j_k})$.\\
 \textbf{Step 2:} 
 Set $x_{k} := x_{k+1} - \chi_{i_k} + \chi_{j_k}$, 
$k:=k-1$, and go to Step 1.
\end{flushleft}

\begin{theorem}
\label{thm:main-reverse}
The algorithm {\sc ReverseSteepestDescentMML1} applied to
an M-convex function $f: \Z^n \to \Rinf$
outputs an optimal solution of {\rm (MML1)}
in $\tau- \gamma$ iterations.
\end{theorem}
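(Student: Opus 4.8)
The plan is to mirror the proof of Theorem~\ref{thm:main1}, but running the induction \emph{downward} and replacing the use of Theorem~\ref{thm:norm-opt-main}~(ii) by its companion (iii). Concretely, I would prove by downward induction on $k$ that the iterate $x_k$ produced by the algorithm satisfies $x_k \in M_k$ for every integer $k \in [\gamma,\tau]$. The base case is $k=\tau$: the vector $x_\tau = \xbullet$ is a minimizer of $f$ with $\|\xbullet - \mycenter\|_1 = 2\tau$, hence it is feasible for (MML1$(\tau)$) and attains the value $\min f = \mu_\tau$, so $x_\tau \in M_\tau$.

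For the inductive step, suppose $x_{k+1}\in M_{k+1}$ for some integer $k\in[\gamma,\tau-1]$. Since feasibility of (MML1) forces $\gamma\ge\sigma$, we have $k\in[\sigma,\tau-1]$, so Theorem~\ref{thm:norm-opt-main} applies. By part~(i) we get $\|x_{k+1}-\mycenter\|_1 = 2(k+1)$ and the strict inequality $\mu_{k+1}<\mu_k$. Applying Theorem~\ref{thm:norm-opt-main}~(iii) to $y=x_{k+1}\in M_{k+1}$ yields indices $i\in\suppp(x_{k+1}-\mycenter)$ and $j\in\suppm(x_{k+1}-\mycenter)$ with $x_{k+1}-\chi_i+\chi_j\in M_k$; this exhibits a center-ward exchange whose value equals $\mu_k$. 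Consequently the choice $x_k = x_{k+1}-\chi_{i_k}+\chi_{j_k}$ made in Step~2 satisfies $f(x_k)\le\mu_k$. Because $i_k\in\suppp(x_{k+1}-\mycenter)$ and $j_k\in\suppm(x_{k+1}-\mycenter)$, the exchange strictly decreases the distance, so $\|x_k-\mycenter\|_1 = 2(k+1)-2 = 2k$; thus $x_k$ is feasible for (MML1$(k)$) and $f(x_k)\ge\mu_k$. Combining the two inequalities gives $f(x_k)=\mu_k$, and together with $\|x_k-\mycenter\|_1=2k$ this yields $x_k\in M_k$. Iterating from $k=\tau$ down to $k=\gamma$ uses $\tau-\gamma$ iterations and produces $x_\gamma\in M_\gamma$, which by Theorem~\ref{thm:norm-opt-main}~(i) is exactly the optimal solution set of (MML1); this is what the algorithm outputs.

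The step I expect to be delicate is the control of the L1-distance inside the inductive step. For the forward algorithm this is automatic: starting from distance $2(k-1)$, a single exchange reaches distance at most $2k$, and since $\mu$ is decreasing every reachable vector has $f\ge\mu_k$, forcing the unconstrained minimizer to sit at distance exactly $2k$. In the reverse direction the same monotonicity works \emph{against} us, since from distance $2(k+1)$ an exchange can also reach distance $2(k+2)$, where the value may drop below $\mu_k$ (indeed below $\mu_{k+1}$, as $x_{k+1}$ is not a global minimizer when $k+1<\tau$). Hence the minimization in Step~2 has to be read as ranging over exchanges that move toward the center, i.e.\ with $i\in\suppp(x_{k+1}-\mycenter)$ and $j\in\suppm(x_{k+1}-\mycenter)$ — precisely the meaning of ``greedily approaches $\mycenter$''. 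The exchange furnished by Theorem~\ref{thm:norm-opt-main}~(iii) is what guarantees that this restricted family is nonempty and that its minimum value is $\mu_k$; this is the only genuinely new point relative to Theorem~\ref{thm:main1}, and the remaining verifications are the same bookkeeping.
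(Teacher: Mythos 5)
Your proof is correct and follows essentially the same route as the paper: the paper's own proof is a one-line reduction to the argument for Theorem~\ref{thm:main1}, namely downward induction showing $x_k \in M_k$ via Theorem~\ref{thm:norm-opt-main}~(iii), which is exactly what you carry out in detail. The one place where you go beyond the paper is worth keeping: Step~2 of {\sc ReverseSteepestDescentMML1} as literally written minimizes over \emph{all} pairs $i_k, j_k \in N$, and you correctly observe that this unrestricted minimization would fail --- since $x_{k+1}$ is not a global minimizer when $k+1 < \tau$, the unrestricted steepest exchange attains a value strictly below $\mu_{k+1} < \mu_k$ and must therefore land at distance $2(k+2)$ from $\mycenter$, moving away from the center rather than toward it. Your reading of Step~2 as restricted to $i_k \in \suppp(x_{k+1}-\mycenter)$ and $j_k \in \suppm(x_{k+1}-\mycenter)$ is precisely the interpretation needed to make ``greedily approaches $\mycenter$'' rigorous, and Theorem~\ref{thm:norm-opt-main}~(iii) is what guarantees this restricted family is nonempty with minimum value $\mu_k$; the paper's terse proof never makes this point explicit, so your treatment is the more careful one.
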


\begin{proof}
 In a similar way as in the proof of Theorem \ref{thm:main1},
we can show that $x_k \in M_k$ holds
for $k = \tau, \tau-1, \ldots, \gamma$.
 Hence, the output $x_\gamma$ of the algorithm is an optimal solution of (MML1).
\end{proof}

\subsection{Remarks in Section \ref{sec:sda-MML1}}

 As an immediate corollary of Theorem \ref{thm:main1},
we can obtain the following property of the algorithm {\sc SteepestDescent} 
in the case with $\mycenter \in \dom f$.
 Note that the behavior of the algorithm {\sc SteepestDescentMML1} 
coincides with that of {\sc SteepestDescent} 
if $\mycenter \in \dom f$ and $\gamma \ge \tau$.

\begin{corollary}
\label{coro:M-steep-descent}
 Let $f: \Z^n \to \Rinf$ be an M-convex function with 
$\mycenter \in \dom f$.
 Suppose that the algorithm {\sc SteepestDescent} is applied to $f$  
with $\mycenter$  as an initial vector.
 Then, the algorithm terminates in exactly $\tau$ iterations
and outputs an optimal solution of {\rm (MML1)}.
\end{corollary}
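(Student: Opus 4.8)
The plan is to deduce the statement directly from Theorem~\ref{thm:main1}, supplemented by the local-optimality characterization of minimizers of an M-convex function (\cite[Theorem~6.26]{Murota03book}). First I would record that $\mycenter \in \dom f$ forces $\sigma = 0$ and $M_0 = \{\mycenter\}$, so the initial vector $x_0 = \mycenter$ of {\sc SteepestDescent} already lies in $M_0$. Since Step~2 (the descent step) is literally identical in {\sc SteepestDescent} and {\sc SteepestDescentMML1} and the two algorithms differ only in their stopping rule, the descent invariant proved for Theorem~\ref{thm:main1} carries over without change: as long as the algorithm has not yet halted and $k \le \tau$, the iterate satisfies $x_k \in M_k$. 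I would recall the reason briefly, namely that every neighbor $x_{k-1} + \chi_i - \chi_j$ lies within L1-distance $2k$ of $\mycenter$ and hence has value at least $\mu_k$, while Theorem~\ref{thm:norm-opt-main}(ii) furnishes one neighbor attaining $\mu_k$; thus the steepest step attains $\mu_k$ and lands in $M_k$.

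With the invariant in hand, the remaining task is to analyze the termination test at each stage. For $k < \tau$, Theorem~\ref{thm:norm-opt-main}(i) gives the strict inequality $\mu_k > \mu_{k+1}$, and part~(ii) exhibits for the iterate $x_k \in M_k$ a neighbor of value $\mu_{k+1} < f(x_k)$; hence the stopping test of {\sc SteepestDescent} fails and the algorithm proceeds to generate $x_{k+1} \in M_{k+1}$. At $k = \tau$, I would use $M_\tau \subseteq \arg\min f$ to conclude $x_\tau \in \arg\min f$, so the local-optimality characterization yields $f(x_\tau + \chi_i - \chi_j) \ge f(x_\tau)$ for all $i, j \in N$; now the stopping test succeeds and the algorithm halts. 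Therefore it performs exactly $\tau$ iterations, and the output $x_\tau$ is a minimizer of $f$ with $\|x_\tau - \mycenter\|_1 = 2\tau \le 2\gamma$, hence feasible and optimal for (MML1).

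The hard part is really just the descent invariant $x_k \in M_k$, which couples the L1-distance constraint to the objective through the distance bound above; but since this is exactly what was already established for Theorem~\ref{thm:main1}, the only genuine work here is to verify the \emph{two-sided} behaviour of the stopping test --- strict descent ($\mu_k > \mu_{k+1}$) forcing continuation for every $k < \tau$, and local optimality at the minimizer $x_\tau$ forcing termination --- so that the iteration count is pinned down to be exactly $\tau$ rather than merely at most $\tau$.
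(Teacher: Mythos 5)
Your proof is correct and follows essentially the same route as the paper: the paper obtains the corollary immediately from Theorem~\ref{thm:main1} (whose inductive invariant $x_k \in M_k$ rests on Theorem~\ref{thm:norm-opt-main}), after noting that {\sc SteepestDescent} and {\sc SteepestDescentMML1} behave identically when $\mycenter \in \dom f$ and $\gamma \ge \tau$. Your explicit two-sided analysis of the stopping test (strict descent $\mu_k > \mu_{k+1}$ forcing continuation for $k < \tau$, and global minimality of $x_\tau \in M_\tau \subseteq \arg\min f$ forcing termination) is exactly the content the paper compresses into the word ``immediate,'' and you share the paper's implicit restriction to the regime $\tau \le \gamma$, which is needed for the output to be feasible for (MML1).
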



\begin{remark}\rm
 In Corollary \ref{coro:M-steep-descent}
we obtained the exact bound
on the number of iterations required by the algorithm {\sc SteepestDescent}.
 While this bound is shown 
for some special case of M-convex functions 
and for some  variants of the algorithm,
it is not proven so far for 
the ``naive'' steepest descent algorithm (i.e., {\sc SteepestDescent}).

 The same bound for {\sc SteepestDescent} is obtained by \cite{Murota03} 
for the special case where an M-convex function has a unique minimizer.
 Based on this fact, 
the same bound for general M-convex functions is obtained in \cite{Murota03},
where a variant of {\sc SteepestDescent}
with certain tie-breaking rules 
 in the choice of $i_k$ and $j_k$ in Step 1 is used.
 The same  bound can be also obtained 
by using another variant of {\sc SteepestDescent} in \cite{Shioura04},
where a region containing an optimal solution
is explicitly maintained by lower and upper bound vectors.
 Corollary \ref{coro:M-steep-descent} shows that
no modification of the algorithm {\sc SteepestDescent} 
is necessary to obtain the same exact bound.
\qed
\end{remark}

\begin{remark}\rm
 It can be shown that the sequence of optimal values $\mu_k$ for (MML1$(k)$)
is a convex sequence.

\begin{theorem}
\label{thm:convex-seq}
For every integer $k \in [1,\tau-1]$,
it holds that $\mu_{k-1} + \mu_{k+1} \ge 2\mu_k$. 
\end{theorem}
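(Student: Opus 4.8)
The plan is to establish this discrete midpoint inequality by producing, from fixed optimal solutions of $(\mathrm{MML1}(k-1))$ and $(\mathrm{MML1}(k+1))$, a single exchange step whose two resulting vectors are both feasible for $(\mathrm{MML1}(k))$ and whose objective values sum to at most $\mu_{k-1}+\mu_{k+1}$. We may assume $k-1 \ge \sigma$, since otherwise $(\mathrm{MML1}(k-1))$ is infeasible, $\mu_{k-1}=+\infty$, and the inequality is trivial. So fix $y \in M_{k-1}$ and choose $w \in M_{k+1}$ minimizing $\|y-w\|_1$; this minimality will be the crucial ingredient. By Theorem \ref{thm:norm-opt-main} (i) we have $\|y-\mycenter\|_1=2(k-1)$ and $\|w-\mycenter\|_1=2(k+1)$, so in particular $\|w-\mycenter\|_1>\|y-\mycenter\|_1$.

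First I would apply Lemma \ref{lem:suppp-suppm} to the pair $y,w$ to locate an index that moves the $w$-side strictly toward the center: this gives either some $i\in\suppp(w-y)$ with $w(i)>\mycenter(i)$, or some $j\in\suppm(w-y)$ with $w(j)<\mycenter(j)$. In the first case I apply (M-EXC) to $w,y$ with this $i$ to get $j\in\suppm(w-y)$ and $f(w)+f(y)\ge f(w')+f(y')$, where $w'=w-\chi_i+\chi_j$ and $y'=y+\chi_i-\chi_j$; in the second case I apply (M-EXC) to the ordered pair $y,w$ with the chosen $j\in\suppp(y-w)$, which produces $i\in\suppp(w-y)$ and the same two vectors. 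In either case the step deletes $\chi_i$ from (resp. adds $\chi_j$ to) $w$ at a coordinate where $w$ lies strictly on the far side of $\mycenter$, so that coordinate's contribution to the distance drops by one; hence $\alpha:=\|w'-\mycenter\|_1-\|w-\mycenter\|_1\in\{-2,0\}$. Moreover Lemma \ref{lem:norm-sepconv}, applied with third vector $\mycenter$ and $i\in\suppp(w-y)$, $j\in\suppm(w-y)$, gives $\alpha+\beta\le 0$, where $\beta:=\|y'-\mycenter\|_1-\|y-\mycenter\|_1$.

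The heart of the argument is ruling out $\alpha=0$. If $\alpha=0$, then $\|w'-\mycenter\|_1=2(k+1)$ gives $f(w')\ge\mu_{k+1}$, while $\alpha+\beta\le0$ forces $\|y'-\mycenter\|_1\le 2(k-1)$ and hence $f(y')\ge\mu_{k-1}$ by the monotonicity in Theorem \ref{thm:norm-opt-main} (i) (with the convention $f(y')=+\infty$ when $y'\notin\dom f$). Combined with $f(w)+f(y)\ge f(w')+f(y')$ this forces equality throughout, so $w'\in M_{k+1}$; but $\|w'-y\|_1=\|w-y\|_1-2$ contradicts the minimality of $\|y-w\|_1$. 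Therefore $\alpha=-2$, giving $\|w'-\mycenter\|_1=2k$ and $f(w')\ge\mu_k$; and $\alpha+\beta\le0$ then yields $\|y'-\mycenter\|_1\le 2k$, so $f(y')\ge\mu_k$ again by monotonicity (or $f(y')=+\infty$). Adding the two bounds,
\[
\mu_{k-1}+\mu_{k+1}=f(y)+f(w)\ge f(w')+f(y')\ge 2\mu_k,
\]
which is the claim.

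I expect the main obstacle to be precisely the exclusion of the degenerate configuration $(\alpha,\beta)=(0,0)$: this is the one case in which a single exchange fails to push both solutions onto the level set $\{x:\|x-\mycenter\|_1=2k\}$, so the naive bound would only give the vacuous $\mu_{k-1}+\mu_{k+1}\ge\mu_{k-1}+\mu_{k+1}$. The minimality of $\|y-w\|_1$ is what breaks this tie, in exactly the same spirit as the proof of Theorem \ref{thm:norm-opt-main} (ii).
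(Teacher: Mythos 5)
Your proof is correct, but it takes a genuinely different route from the paper's. The paper obtains the inequality as a short corollary of Theorem \ref{thm:norm-opt-main} (ii)/(iii): applying part (ii) twice yields $x_{k-1} \in M_{k-1}$ and $x_{k+1} \in M_{k+1}$ with $x_{k+1} = x_{k-1} - \chi_i - \chi_{i'} + \chi_j + \chi_{j'}$, where $i,i' \in \suppm(x_{k+1}-\mycenter)$ and $j,j' \in \suppp(x_{k+1}-\mycenter)$; a single application of (M-EXC) to this structured pair then produces two vectors that automatically lie on the sphere $\|x-\mycenter\|_1 = 2k$, each with value at least $\mu_k$, so no tie-breaking argument is needed. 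You instead use only part (i) of Theorem \ref{thm:norm-opt-main} and re-run the proof \emph{technique} of part (ii) directly on the pair $(M_{k-1},M_{k+1})$: a closest pair $(y,w)$, Lemma \ref{lem:suppp-suppm} to orient the exchange, Lemma \ref{lem:norm-sepconv} to get $\alpha+\beta\le 0$, and the minimality of $\|y-w\|_1$ to exclude the degenerate case $\alpha=0$. What the paper's route buys is brevity: the structured pair forces both exchange outcomes onto the $2k$-sphere, so the degenerate configuration never arises (though the paper silently relies on the fact that two applications of (ii) compose without cancellation and with the stated support conditions, which does hold by a distance count). What your route buys is independence from the adjacency results (ii)/(iii) — you need only the monotonicity and exact-distance statements of part (i) plus the two technical lemmas — and you also dispose of the boundary case $k-1<\sigma$ explicitly, which the paper leaves implicit.
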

\qed
\end{remark}

\begin{proof}[Proof of Theorem \ref{thm:convex-seq}]
For $k=1,2,\ldots, \tau-1$, let $x_{k-1} \in M_{k-1}$
and  $x_{k+1} \in M_{k+1}$ be vectors such that
\[
 x_{k+1} = x_{k-1} - \chi_i - \chi_{i'} + \chi_{j} + \chi_{j'}
\]
for some $i, i', j, j' \in  N$ with 
$\{i, i'\} \cap \{j, j'\} = \emptyset$,
$i, i' \in \suppm(x_{k+1} - \mycenter)$,
and
$j, j' \in \suppp(x_{k+1} - \mycenter)$;
the existence of such $x_{k-1}$ and $x_{k+1}$
follows from the claim (ii) (or (iii)) of Theorem \ref{thm:norm-opt-main}.
 By (M-EXC) applied to $x_{k-1}$ and $x_{k+1}$,
we have 
$f(x_{k-1}) + f(x_{k+1}) \ge  f(y) + f(z)$
with $(y,z) = (x_{k-1}- \chi_i + \chi_{j}, x_{k-1}- \chi_{i'} + \chi_{j'})$
or $(y,z) = (x_{k-1}- \chi_i + \chi_{j'}, x_{k-1}- \chi_{i'} + \chi_{j})$.
In either case we have $\|y - \mycenter\|_1 = \|z - \mycenter\|_1 = 2k$,
and therefore follows that
\[
\mu_{k-1} + \mu_{k+1} = 
f(x_{k-1}) + f(x_{k+1}) \ge  f(y) + f(z) \ge 2 \mu_k.
\]
\end{proof}

\subsection{Proof of Lemma \ref{lem:MML-MML1}}

 We first show that every feasible solution $x$ of (MM-L) satisfies
the L1-distance constraint $\|x - \mycenter\|_1 \le 2\gamma$.
 Under the condition $\hat{\ell} \le x \le \hat u$ we have
\[
 \|x - \mycenter\|_1 = (x(P) - \mycenter(P)) + 
(\mycenter(N \setminus P) - x(N \setminus P)),
\]
and the equation $x(N) = \theta = \mycenter(N)$
implies that
$x(P) - \mycenter(P) = \mycenter(N \setminus P) - x(N \setminus P)$.
 Since $x(P) = \mycenter(P) + \gamma$,
the L1-distance $\|x - \mycenter\|_1$ is bounded by $2\gamma$.
 
 To conclude the proof, it suffices to 
show that there exists an optimal solution $x^*$ of (MML1)
such that $x^*(P) = \mycenter(P) + \gamma$
and $\hat{\ell} \le x^* \le \hat u$.
 Repeated use of Theorem \ref{thm:norm-opt-main} (iii) implies that
there exists an optimal solution  $x^* \in \dom f$ of
{\rm (MML1)} such that $\|x^* - \mycenter\|_1 = 2\gamma$,
$x^*(P) = \mycenter(P) + \gamma$, 
$x^*(N \setminus P) = \mycenter(N \setminus P) - \gamma$, and
\begin{equation}
\label{eqn:optsol-ulbounds}
 \mycenter(i) \le x^*(i) \le \xbullet(i)\ (i \in P),
\quad
 \xbullet(i) \le x^*(i) \le \mycenter(i) \ (i \in N \setminus P).
\end{equation}
 By the equation $x^*(P) = \mycenter(P) + \gamma$, 
for $i \in P$ the upper bound of $x^*(i)$ in \eqref{eqn:optsol-ulbounds}
can be replaced with
$\min\{\xbullet(i), \mycenter(i) + \gamma\}$.
 Similarly, 
for $i \in N \setminus P$ the lower bound of $x^*(i)$ in \eqref{eqn:optsol-ulbounds}
can be replaced with
$\max\{\xbullet(i), \mycenter(i) - \gamma\}$.
 This concludes the proof.

\subsection{Proof of Proposition \ref{prop:g-Mnat-polymat}}

 We show that $g$ is M-convex.
 Define a function $f': \Z^n \to \Rinf$  by
\[
 f'(x) = 
\begin{cases}
f(x)  & (\mbox{if } x(N) = \theta,\ 
\hat{\ell}(i) \le x(i) \le  \hat{u}(i)\ (i \in P)),\\
+ \infty & (\mbox{otherwise}).
\end{cases}
\]
 Then, $f'$ is an M-convex function \cite[Theorem~6.13 (5)]{Murota03book}.
 The function 
$g': \Z^{N\setminus P} \to \Rinf$ given by
\[
 g'(y) = \min\{f'(x) \mid  x(i)=y(i)\ (i \in N \setminus P) \}
\qquad (y \in \Z^{N \setminus P})
\]
is an  \Mnat-convex function since $g'$ is a projection
of the M-convex function $f'$ \cite[Theorem~6.15~(2)]{Murota03book}.
 Finally, function $g$ is given as
\[
 g(y) =
\begin{cases}
 g'(y) & 
 (\mbox{if } 
y(N \setminus P) = \theta - (\mycenter(P) + \gamma),\ 
\hat{\ell}(i) \le x(i) \le  \hat{u}(i)\ (i \in N \setminus P)),\\
+ \infty & (\mbox{otherwise}),
\end{cases}
\]
and therefore $g$ is M-convex (cf.~\cite[Theorem~6.13]{Murota03book}).
%

\subsection{Proof of Lemma  \ref{lem:sra-opt-update}}

 We denote $\hat{x} = x + \chi_i - \chi_j$, and
let $\hat{b} \in \Z^n$ be
 an optimal  solution  of the problem {\rm  (SRA$(\hat x)$)} 
that  minimizes the value $\|\hat{b} - {b}\|_1$.
 We show that the vector $\hat{b}$ satisfies
the condition~\eqref{eqn:sra-opt-update-cond1}.

\medskip

\noindent 
Claim 1:
 If $s \in N$ satisfies either
$s \in \suppp(\hat{b}- {b}) \setminus \{i\}$
or $s = i$ and $\hat{b}(i)- b(i) \ge 2$, 
then
\begin{align}
& 
c_s(\hat x(s) -\hat{b}(s), \hat{b}(s)) + c_s(x(s)- {b}(s), {b}(s))
\notag\\
& \ge
 c_s(\hat x(s) - \hat{b}(s)+1, \hat{b}(s)-1)
+ c_s(x(s) - {b}(s) - 1, {b}(s)+1).
  \label{eqn:sra-opt-update-7}
\end{align}
 If $t \in N$ satisfies either
$t \in \suppm(\hat{b}- {b}) \setminus \{j\}$
or $t = j$ and $\hat{b}(j)- b(j) \le -2$, 
then
\begin{align}
& 
c_t(\hat x(t) -\hat{b}(t), \hat{b}(t))
+ c_t(x(t)- {b}(t), {b}(t))
\notag\\
& \ge
 c_t(\hat x(t) - \hat{b}(t)-1, \hat{b}(t)+1)
+ c_t(x(t) - {b}(t)+1, {b}(t)-1).
  \label{eqn:sra-opt-update-8}
\end{align}

\noindent
[Proof of Claim] \quad
 We prove the inequality \eqref{eqn:sra-opt-update-7} only
since \eqref{eqn:sra-opt-update-8} can be shown similarly.
If $s \in \suppp(\hat{b}- {b}) \setminus \{i\}$ then
we have $\hat{b}(s) > {b}(s)$
and $\hat{x}(s) - \hat{b}(s) < x(s)- b(s)$.
 If $s=i$ and 
$\hat{b}(i)- b(i) \ge 2$, then we have
$\hat x(i) -\hat{b}(i) \le x(i)+1 - (b(i)+2) < x(i) - b(i)$.
In either case, 
\eqref{eqn:sra-opt-update-7} follows
by Proposition \ref{prop:2multimod} (i).
[End of Claim]

\medskip

 To prove the lemma, we consider the following two conditions:
\begin{align*}
\mbox{(a) }  &
\mbox{$\suppp(\hat{b}- {b}) \subseteq \{i\}$
and  $\hat{b}(i)- b(i) \le 1$,} 
\\
\mbox{(b) }  &
\mbox{$\suppm(\hat{b}- {b}) \subseteq\{j\}$
and $\hat{b}(j)- b(j) \ge -1$.} 
\end{align*}

\medskip

\noindent
Claim 2: 
 At least one of the conditions (a) and (b) holds.
 Moreover, the condition (a) holds if $b(N) < B$,
and the condition (b) holds if $\hat{b}< B$.

\noindent
{[Proof of Claim]} \quad
 To prove the former statement,
assume, to the contrary, that
there exist some $s, t \in N$
satisfying the following two conditions:
\begin{align*}
  &
\mbox{$s \in \suppp(\hat{b}- {b}) \setminus \{i\}$
or $s=i$ and $\hat{b}(i)- b(i) \ge 2$,} 
\\
  &
\mbox{$t \in \suppm(\hat{b}- {b}) \setminus \{j\}$
or $t=j$ and $\hat{b}(j)- b(j) \le -2$.} 
\end{align*}
 Then, it holds that
\begin{align}
& 
c(\hat x - \hat b,  \hat b) + c(x - b,   b) 
\notag\\
&  \quad
-   c(\hat x - (\hat b - \chi_s + \chi_t),  \hat b - \chi_s + \chi_t)
- c(x - (b +\chi_s - \chi_t),  b + \chi_s - \chi_t) 
\notag\\
& =
\bigg[ 
c_s(\hat x(s) -\hat{b}(s), \hat{b}(s)) + c_s(x(s)- {b}(s), {b}(s))
\notag\\
& \qquad
-
 c_s(\hat x(s) - \hat{b}(s)+1, \hat{b}(s)-1)
- c_s(x(s) - {b}(s) - 1, {b}(s)+1)
\bigg]
\notag\\
& \quad +
\bigg[
c_t(\hat x(t) -\hat{b}(t), \hat{b}(t)) + c_t(x(t)- {b}(t), {b}(t))
\notag\\
& \qquad
- c_t(\hat x(t) - \hat{b}(t)-1, \hat{b}(t)+1)
- c_t(x(t) - {b}(t)+1, {b}(t)-1)
\bigg]
\notag\\
& \ge 0,
  \label{eqn:sra-opt-update-9}
\end{align}
where the inequality is by
 \eqref{eqn:sra-opt-update-7} and \eqref{eqn:sra-opt-update-8}
in Claim 1.
 Note that ${b}+ \chi_s - \chi_t$ is a feasible solution of (SRA($x$))
since $({b}+ \chi_s - \chi_t)(N)=b(N) \le B$,
$b(s) < \hat{b}(s) \le x(s)$, and $b(t) > \hat{b}(t) \ge 0$.
 Therefore,  we have
\[
 c(x - {b}, {b}) \le 
c(x - ({b} + \chi_s - \chi_t), {b} + \chi_s - \chi_t),
\]
which, together with \eqref{eqn:sra-opt-update-9},
implies 
\[
 c(\hat{x} - \hat{b}, \hat{b}) \ge 
c(\hat{x} - (\hat{b}- \chi_s + \chi_t), \hat{b} - \chi_s + \chi_t).
\]
 Since $\hat{b}$ is an optimal solution of
(SRA$(x+\chi_i -\chi_j)$)
and $\hat{b} - \chi_s + \chi_t$ is a feasible solution
of (SRA$(x+\chi_i -\chi_j)$),
the vector $\hat{b}- \chi_s + \chi_t$  is also an optimal solution
of (SRA$(x+\chi_i -\chi_j)$),
a contradiction to the choice of $\hat{b}$
since 
$\|(\hat{b} - \chi_s + \chi_t) - {b}\|_1
=  \|\hat{b} - {b}\|_1 - 2$.
 This concludes the proof of the former statement.

 To prove the latter statement, 
we assume $b(N) < B$;
the case $\hat{b}(N)< B$ can be proven in a similar way.
 Assume, to the contrary, that
there exist some $s \in N$
satisfying 
$s \in \suppp(\hat{b}- {b}) \setminus \{i\}$, 
or $s=i$ and $\hat{b}(i)- b(i) \ge 2$.
 Then, we have
\begin{align}
& 
c(\hat x - \hat b,  \hat b) + c(x -  b,   b)   
-   c(\hat x - (\hat b - \chi_s),  \hat b - \chi_s)
- c(x - (b + \chi_s),   b + \chi_s) 
\notag\\
& =
c_s(\hat x(s) -\hat{b}(s), \hat{b}(s)) + c_s(x(s)- {b}(s), {b}(s))
\notag\\
& \qquad
-
 c_s(\hat x(s) - \hat{b}(s)+1, \hat{b}(s)-1)
- c_s(x(s) - {b}(s) - 1, {b}(s)+1)
\notag\\
& \ge 0,
  \label{eqn:sra-opt-update-1}
\end{align}
where the inequality is by  \eqref{eqn:sra-opt-update-7}
in Claim 1. 
 The vector ${b}+ \chi_s$ is a feasible solution of (SRA($x$))
since $b(N) < B$.
 Hence, we have 
\[
 c(x - {b}, {b}) \le c(x - ({b} +\chi_s), {b} + \chi_s),
\]
which, together with \eqref{eqn:sra-opt-update-1},
implies 
\begin{equation}
  \label{eqn:sra-opt-update-2}
  c(\hat{x} - \hat{b}, \hat{b}) \ge 
c(\hat{x} - (\hat{b} - \chi_s), \hat{b} - \chi_s).
\end{equation}
 Since  $\hat{b} - \chi_s$  is a feasible solution
of (SRA$(x+\chi_i -\chi_j)$),
optimality of $\hat{b}$ and 
the inequality \eqref{eqn:sra-opt-update-2} imply that 
$\hat{b} - \chi_s$  is also an optimal solution
of (SRA$(x+\chi_i -\chi_j)$),
a contradiction to the choice of $\hat{b}$
since 
$\|(\hat{b} - \chi_s) - {b}\|_1
=  \|\hat{b} - {b}\|_1 - 1$.
 Hence, the condition (a) holds.
[End of Claim]

\medskip

 We now prove the lemma.
 It is easy to see from Claim 2 that the following properties hold:
\begin{quote}
$\bullet$ 
if $b(N) = \hat{b}(N) < B$.
then 
$\hat{b} \in \{b, b + \chi_i - \chi_j\}$,
\\
$\bullet$ 
if $b(N) <\hat{b}(N) \le B$, then $\hat{b} = b+ \chi_i$,
\\
$\bullet$ 
if $B \ge b(N) > \hat{b}(N)$, then
$\hat{b} = b- \chi_j$.
\end{quote}

 We next consider the case with $b(N) = \hat{b}(N) = B$.
 Then, one of (a) and (b) holds by Claim 2.
 Suppose that (a) holds.
 If $\suppp(\hat{b}-b) = \emptyset$,
then $\hat{b}=b$ follows since $b(N) = \hat{b}(N)$.
 If $\suppp(\hat{b}-b) \ne \emptyset$,
then we have $\suppp(\hat{b}-b) = \{i\}$ and $\hat{b}(i)=b(i)+1$.
 Since $b(N) = \hat{b}(N)$,
there exists a unique element $t$ in 
$\suppm(\hat{b}-b)$ and it satisfies
$i \ne i$  and $\hat{b}(t)=b(t)-1$.
 Hence, we have
$\hat{b}=b$ or $\hat{b}=b + \chi_i - \chi_t$ 
for some $t \in N \{i\}$.
 If the condition (b) holds, then
we can show in a similar way that
$\hat{b}=b$ or $\hat{b}=b + \chi_s - \chi_j$ 
for some $s \in N \setminus \{j\}$.

\subsection{Proof of Proposition \ref{prop:DA-convex-B}}

We consider a variant of the problem (DA), where
the constant $B$ is replaced with a parameter $\alpha$:
\[
 \begin{array}{l|lll}
\mbox{(DA$[\alpha]$)} & \mbox{Minimize }  & 
\displaystyle c(d, b)\\
& \mbox{subject to } & 
\displaystyle  d(N)+b(N) = D+ B,\\
& & 
\displaystyle  b(N) \le  \alpha,\\
& & \ell \le d + b \le u,\\
& &  \bar{d} + \bar{b} -\gamma \1 \le    d + b
 \le \bar{d} + \bar{b} + \gamma \1,\\
& &  d, b \in \Z^n_+.
 \end{array}
\]
 We denote by $\psi(\alpha)$
the optimal value of (DA$[\alpha]$).
 To prove Proposition \ref{prop:DA-convex-B},
it suffices to show the following property of $\psi(\alpha)$.

\begin{lemma}
The value $\psi(\alpha)$ is a convex function in
  $\alpha \in [0, B]$.
\end{lemma}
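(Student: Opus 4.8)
The plan is to relate $\psi$ to a one-dimensional marginal of the objective and then exploit discrete convexity. First I would introduce, for each integer $\beta$, the value
\[
h(\beta) = \min\{c(d,b) \mid (d,b) \text{ satisfies all constraints of (DA$[\alpha]$) with } b(N)=\beta \text{ replacing } b(N)\le\alpha\},
\]
so that the feasible region of (DA$[\alpha]$) decomposes as the union over $0 \le \beta \le \alpha$ of the level sets $\{b(N)=\beta\}$. Consequently $\psi(\alpha) = \min\{h(\beta) \mid 0 \le \beta \le \alpha\}$, the lower bound $\beta \ge 0$ being automatic from $b \ge \0$. The argument then splits into two essentially independent pieces: (a) prove that $h$ is a discrete convex function of $\beta$, and (b) deduce convexity of $\psi$ from that of $h$.

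For (a) I would work with the \emph{unprojected} objective. Let $C: \Z^{2n}\to\Rinf$ be the function equal to $c(d,b)$ on the set of $(d,b)$ with $d,b\ge\0$, $\ell \le d+b \le u$, $\bar d + \bar b - \gamma\1 \le d+b \le \bar d+\bar b+\gamma\1$, and $d(N)+b(N)=D+B$, and $+\infty$ elsewhere. The key claim is that $C$ is M-convex on $\Z^{2n}$; this is exactly the content of the exchange argument used to prove Theorem~\ref{thm:Mnat-convex-laminar}, which already constructs the required exchanges directly in the $(d,b)$-variables via the multimodular inequalities of Proposition~\ref{prop:2multimod} together with the laminar bounds on each pair $(d(i),b(i))$ and on the total sum. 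Granting this, the projection $C'(b)=\min_d C(d,b)$ is \Mnat-convex on $\Z^n$ by \cite[Theorem~6.15]{Murota03book}, and $h(\beta)=\min\{C'(b)\mid b(N)=\beta\}$ is the aggregation of $C'$ onto the single variable $b(N)$, which is again \Mnat-convex, i.e.\ an ordinary discrete convex function of $\beta$ (cf.~\cite{Murota03book}). Thus $h(\beta-1)+h(\beta+1)\ge 2h(\beta)$ on the interval where $h$ is finite.

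For (b) I would use that $\psi(\alpha)=\min_{0\le\beta\le\alpha} h(\beta)$ is the running minimum from the left of the convex function $h$. Writing $\beta^*$ for the smallest minimizer of $h$, the forward differences of $h$ are nondecreasing, so $\psi(\alpha)=h(\alpha)$ for $\alpha\le\beta^*$ and $\psi(\alpha)=h(\beta^*)$ for $\alpha\ge\beta^*$. The forward differences of $\psi$ therefore coincide with those of $h$ (which are $\le 0$ and nondecreasing) up to $\beta^*-1$ and equal $0$ afterwards; since $h(\beta^*)-h(\beta^*-1)\le 0$, these differences remain nondecreasing across the transition, which is precisely convexity of $\psi$ on $\{0,1,\dots,B\}$.

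The hard part will be step (a): confirming that imposing the coupled constraints $\ell(i)\le d(i)+b(i)\le u(i)$ and the $\gamma$-box, together with $d(N)+b(N)=D+B$, preserves M-convexity of the separable objective $c$. This is where the laminar structure of the constraint sets $\{d(i),b(i)\}$ and $N$ is essential, and it is the same computation that underlies Theorem~\ref{thm:Mnat-convex-laminar}; once $C$ is known to be M-convex, the remaining steps are routine applications of the standard operations (projection and aggregation) that preserve \Mnat-convexity, followed by the elementary running-minimum argument.
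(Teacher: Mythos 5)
Your proof is correct, but it takes a genuinely different route from the paper's. The paper proves discrete midpoint convexity $\psi(\alpha)+\psi(\alpha+2)\ge 2\psi(\alpha+1)$ directly: it takes an optimal solution $(d,b)$ of (DA$[\alpha]$) and an optimal solution $(\hat d,\hat b)$ of (DA$[\alpha+2]$) chosen to minimize $\|\hat d-d\|_1+\|\hat b-b\|_1$, and, in the nontrivial case $\hat b(N)=\alpha+2$, uses the multimodular exchange inequalities of Proposition \ref{prop:2multimod} (splitting on whether $\suppp(\hat b-b)$ meets $\suppm(\hat d-d)$) to shift one unit between the two solutions, producing two feasible solutions of (DA$[\alpha+1]$) with total cost at most $\psi(\alpha)+\psi(\alpha+2)$. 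You instead write $\psi$ as the running minimum of the exact-level value $h(\beta)$, prove $h$ is discrete convex by structural arguments (M-convexity of the $2n$-variable function $C$, then projection and aggregation), and finish with the elementary running-minimum step. Your route yields the stronger fact that $h$ itself is convex and isolates a reusable structural statement; the paper's route is more self-contained, needing only Proposition \ref{prop:2multimod} and no machinery about operations preserving M-convexity. One caution on your step (a): Theorem \ref{thm:Mnat-convex-laminar} is not literally the claim you need --- it asserts M-convexity of the projected function $f(x)$ on $\Z^n$, with the constraint $b(N)\le B$ built into the inner minimization, not M-convexity of $C$ on $\Z^{2n}$. The claim you need is nevertheless true, and can be established either by an exchange argument parallel to (and simpler than) the cases (C1)--(C4) in that proof, or by assembly from standard operations: each $c_i$ is M$^{\natural}$-convex in $(d(i),b(i))$ by Proposition \ref{prop:f2-Mnat-multi}; adding the indicator of the interval constraints on the block sum $d(i)+b(i)$ preserves M$^{\natural}$-convexity (lift to one extra variable and add a separable convex term); direct sums of M$^{\natural}$-convex functions on disjoint variable blocks are M$^{\natural}$-convex; and restricting to the hyperplane $d(N)+b(N)=D+B$ gives M-convexity, since Theorem \ref{thm:Mnat-convex-ineq} (i) supplies (M-EXC) when the two points have equal component sums. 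With that substitution for your appeal to Theorem \ref{thm:Mnat-convex-laminar}, your argument is complete.
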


\begin{proof}
  For $\alpha \in [0, B-2]$,
we show that $\psi(\alpha)+ \psi(\alpha+2) \ge 2 \psi(\alpha+1)$ holds.

 Let $(d,b) \in \Z^n \times \Z^n$ be an optimal solution of (DA$[\alpha]$).
 Also, 
let $(\hat d,\hat b) \in \Z^n \times \Z^n$ be an optimal solution of 
(DA$[\alpha+2]$) that has the minimum value of 
$\|\hat{d} - d\|_1 + \|\hat{b} - b\|_1$.
 Note that we have $c(d,b)= \psi(\alpha)$
and $c(\hat d,\hat b)= \psi(\alpha+2)$.
 By the definition of $\psi$, we have 
$\psi(\alpha) \ge \psi(\alpha+1) \ge \psi(\alpha+2)$.
 Hence, if $(\hat d,\hat b)$ is a feasible solution of (DA$[\alpha+2]$),
then we have $\psi(\alpha+1) \le \psi(\alpha+2)$ and therefore
the inequality $\psi(\alpha)+ \psi(\alpha+2) \ge 2 \psi(\alpha+1)$ follows.
 Therefore, we may assume that $\hat{b}(N) = \alpha+2$ in the following.

 Since $\hat{b}(N)= \alpha + 2 > \alpha = b(N)$
and $\hat{d}(N)+\hat{b}=d(N)+b(N)$, it holds that
$\suppp(\hat{b}- b) \ne \emptyset$ and
$\suppm(\hat{d}- d) \ne \emptyset$.
 We first consider the case where 
 there exists some $i \in \suppp(\hat{b}- b)$
with $i \in \suppm(\hat{d}-d)$.
 Then, Proposition \ref{prop:2multimod} (i) implies that
\[
 c_i(\hat d(i), \hat{b}(i)) + c_i(d(i), {b}(i))  
 \ge
c_i(\hat d(i)+1, \hat{b}(i)-1) + c_i(d(i)-1, {b}(i)+1),  
\]
from which follows that
the vectors $\hat{d}' = \hat{d}+ \chi_i$,
$\hat{b}' = \hat{b}- \chi_i$,
$d'=d-\chi_i$, and $b'=b+\chi_i$ satisfy the inequality
\begin{align*}
 \psi(\alpha+2)+ \psi(\alpha)
 = c(\hat{d},\hat{b})+d(d,b) \ge  c(\hat{d}',\hat{b}')+c(d',b')
 \ge 2\psi(\alpha+1),
\end{align*}
where the last inequality is by the fact that
 $(\hat{d}',\hat{b}')$ and $(d',b')$ are feasible solutions of (DA$[\alpha+1]$).

 We next consider the case where 
 there exists no $i \in \suppp(\hat{b}- b)$
with $i \in \suppm(\hat{d}-d)$,
i.e., we have
$\suppp(\hat{b}- b)\subseteq  N \setminus \suppm(\hat{d}-d)$ 
and therefore $\hat d(i) + \hat{b}(i) > d(i)+ {b}(i)$ holds
for every $i \in \suppp(\hat{b}- b)$.
 Then, we have $\suppm(\hat{d}-d) \subseteq N \setminus \suppp(\hat{b}-b)$
and therefore  $\hat d(j) + \hat{b}(j) < d(j)+ {b}(j)$ holds
for every $j \in \suppm(\hat{d}- d)$.
 By Proposition \ref{prop:2multimod} (ii),
for $i \in \suppp(\hat{b}- b)$ and  $j \in \suppm(\hat{d}-d)$ it holds that
\begin{align*}
 c_i(\hat d(i), \hat{b}(i)) + c_i(d(i), {b}(i))  
& \ge
c_i(\hat d(i), \hat{b}(i)-1) + c_i(d(i), {b}(i)+1),  
\\
 c_j(\hat d(j), \hat{b}(j)) + c_j(d(j), {b}(j))  
& \ge
c_j(\hat d(j)+1, \hat{b}(j)) + c_j(d(j)-1, {b}(j)),  
\end{align*}
from which follows that
the vectors $\hat{d}'' = \hat{d}+ \chi_j$,
$\hat{b}'' = \hat{b}- \chi_i$,
$d''=d-\chi_j$, and $b''=b+\chi_i$ satisfy the inequality
\begin{align*}
 \psi(\alpha+2)+ \psi(\alpha)
 = c(\hat{d},\hat{b})+d(d,b) \ge  c(\hat{d}'',\hat{b}'')+c(d'',b'')
 \ge 2\psi(\alpha+1),
\end{align*}
where the last inequality is by the fact that
 $(\hat{d}'',\hat{b}'')$ and $(d'',b'')$ are feasible solutions 
of (DA$[\alpha+1]$).
\end{proof}

\subsection{Algorithms for (DA)}

\subsubsection{Algorithms}

 We first propose a steepest descent algorithm for (DA).
 By using the fact that the problem (DA) can be reformulated as the minimization
of the M-convex function $f$ given by \eqref{eqn:def-f}, 
we can show that (DA) can be solved by a steepest descent algorithm 
similar to {\sc SteepestDescentDR$'$}.
 Difference from {\sc SteepestDescentDR$'$}
is in the choice of the initial vector
and in the termination condition. 
 In the algorithm below, the initial vector can be any feasible solution
that is bike-optimal, 
and the termination condition is given by a local optimality.
 Here, we say that a feasible solution $(d,b)$ of (DA) is
\textit{bike-optimal} if $b$ is an optimal solution of the problem (SRA$(d+b)$).

\begin{flushleft}
 \textbf{Algorithm} {\sc SteepestDescentDA} 
\\
 \textbf{Step 0:} 
 Set $(d_0, b_0)$ be an arbitrarily chosen
bike-optimal feasible solution, and $k:=1$.
\\
 \textbf{Step 1:} 
 If $c(d',b') \ge c(d_{k-1}, b_{k-1})$
for every $(d', b') \in N(d_{k-1}, b_{k-1}) \cap R$,
then output\\
\noindent
\phantom{Step 1: }
 the solution $(d_{k-1}, b_{k-1})$ and stop.\\
 \textbf{Step 2:} 
 Find $(d', b') \in N(d_{k-1}, b_{k-1}) \cap R$
that minimizes $c(d', b')$.\\
 \textbf{Step 3:} 
 Set $(d_{k}, b_{k}) := (d', b')$ and go to Step 1.
\end{flushleft}

 By applying Corollary \ref{coro:M-steep-descent} 
to $f$ in \eqref{eqn:def-f}
and also using the same analysis in Section 6.1, we obtain the following
time complexity bound.

\begin{theorem}
\label{thm:sda-DA}
The algorithm {\sc SteepestDescentDA}  
outputs an optimal solution in $\Oh(n + \nu \log n)$ time
 with 
\[
 \nu = \min\{\|(d+b) - (d_0+b_0)\|_1 \mid 
(d,b) \mbox{ is an optimal solution of \rm (DA)}
\}.
\]
\end{theorem}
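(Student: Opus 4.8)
The plan is to reduce {\sc SteepestDescentDA} to the plain steepest descent algorithm {\sc SteepestDescent} run on the M-convex function $f$ of \eqref{eqn:def-f}, and then read off the iteration count from Corollary~\ref{coro:M-steep-descent} and the per-iteration cost from the heap implementation of Section~\ref{sec:sda-drp}.

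First I would record that (DA) is exactly the unconstrained minimization of $f$: the constraints $x(N)=D+B$, $\ell\le x\le u$, and the box $\bar d+\bar b-\gamma\1\le x\le\bar d+\bar b+\gamma\1$ are all built into $\dom f$, and for each $x\in\dom f$ the optimal bike allocation of (SRA$(x)$) realizes $f(x)$; in particular $\{d+b\mid(d,b)\text{ optimal for (DA)}\}=\arg\min f$. I would then identify the trajectory of {\sc SteepestDescentDA} with a run of {\sc SteepestDescent} on $f$ started at $x_0=d_0+b_0$. Bike-optimality of $(d_0,b_0)$ gives $c(d_0,b_0)=f(x_0)$; every $(d',b')\in N(d,b)$ satisfies $d'+b'=x+\chi_i-\chi_j$, and Lemma~\ref{lem:bdr-neighbor} shows that minimizing $c$ over $N(d_{k-1},b_{k-1})\cap R$ (intersected with $\dom f$) returns precisely $\min_{i,j}f(x_{k-1}+\chi_i-\chi_j)$. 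Hence each iteration performs the same update $x_k:=x_{k-1}+\chi_{i_k}-\chi_{j_k}$ as {\sc SteepestDescent}, and the stopping test matches the local-optimality characterization of a minimizer of $f$. By Lemma~\ref{lem:sra-opt-update}, the companion allocation $b_k$ stays optimal for (SRA$(x_k)$), so $c(d_k,b_k)=f(x_k)$ is maintained throughout.

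With this identification, Corollary~\ref{coro:M-steep-descent} applied with center $\mycenter=x_0$ shows the run terminates after exactly $\tau=(1/2)\min\{\|x-x_0\|_1\mid x\in\arg\min f\}$ iterations and outputs a minimizer of $f$, i.e.\ an optimal solution of (DA). Because $\arg\min f$ is the image of the optimal solutions of (DA) under $(d,b)\mapsto d+b$, we have $2\tau=\nu$, so the number of iterations is $\nu/2$. For the cost per iteration I would reuse the six binary heaps of Section~\ref{sec:sda-drp}: they locate the cheapest move in $N(d,b)\cap R$ and, since Lemma~\ref{lem:sra-opt-update} alters $b_k$ in only a constant number of coordinates, can be updated in $\Oh(\log n)$ time, while their initial construction costs $\Oh(n)$. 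Summing gives $\Oh(n+(\nu/2)\log n)=\Oh(n+\nu\log n)$.

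The step I expect to be the crux is the second one---pinning down that {\sc SteepestDescentDA} is a \emph{legal} run of {\sc SteepestDescent} on $f$, not merely one that agrees on objective values. This requires checking that bike-optimality is preserved under the update and that restricting the neighborhood to $\dom f$ (equivalently, discarding the moves that leave the $\gamma$-box, where $f=+\infty$) removes no improving step that {\sc SteepestDescent} would take. Once this equivalence is in place, the exact iteration bound and the logarithmic per-step cost are immediate from the cited results.
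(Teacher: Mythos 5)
Your proposal is correct and follows essentially the same route as the paper: the paper also proves this theorem by applying Corollary~\ref{coro:M-steep-descent} to the M-convex function $f$ of \eqref{eqn:def-f} (viewing (DA) as unconstrained minimization of $f$, so the iteration count is $\nu/2$) and by reusing the six-heap, $\Oh(\log n)$-per-iteration implementation from Section~\ref{sec:sda-drp}. Your extra care in verifying that bike-optimality is preserved (via Lemmas~\ref{lem:sra-opt-update} and~\ref{lem:bdr-neighbor}) makes explicit what the paper leaves implicit, but it is the same argument.
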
 

 We then propose a polynomial-time proximity-scaling algorithm
for (DA).
 For a positive integer $\lambda$ and
a feasible solution $(d,b)$  of (DA), we say that
$(d,b)$ is $\lambda$-optimal if
$c(d',b') \ge c(d,b)$ holds 
for every feasible solution $(d', b')$  of {\rm (DA)} with
 $d'(i)-d(i) \in \{0, + \lambda, - \lambda\}$ 
and  $b'(i)-b(i) \in \{0, + \lambda, - \lambda\}$. 

 A $\lambda$-optimal solution can be obtained by finding an optimal solution
of the following problem:
\[
 \begin{array}{l|lll}
\mbox{\rm (DA$(\lambda)$)} & \mbox{\rm Minimize }  & 
\displaystyle c(d, b)\\
& \mbox{\rm subject to } & 
\displaystyle  d(N)+b(N) = D+ B,\\
& & 
\displaystyle  b(N) \le  B,\\
& &  \ell \le d + b \le  u,\\
& &  d, b \in \Z^n_+,\\
& &  d(i)-\check{d}(i)  , b(i) - \check{b}(i)
\mbox{ are integer multiple of }\lambda \mbox{ for }i \in N,
 \end{array}
\]
where $(\check{d}, \check{b})$ is some (fixed)
feasible solution of (DA).
 Note that for $i \in N$, the function
\[
 c_i^\lambda(\eta, \zeta) =
c_i(\lambda \eta + \check{d}(i), \lambda \zeta + \check{b}(i))
\]
is also a multimodular function in $(\eta,\zeta)$, 
the problem (DA$(\lambda)$) has the same comibnatorial structure as (DA),
and therefore any algorithm for  (DA) can be applied 
to (DA$(\lambda)$).
 Our proximity-scaling algorithm is based on this fact and
the following proximity theorem for (DA):

\begin{theorem}
\label{thm:bdr-proximity}
 Let $\lambda$ be a positive integer with $\lambda \ge 2$,
and $(d,b) \in \Z^n \times \Z^n$ be a $\lambda$-optimal solution of {\rm (DA)}.
Then, there exists some optimal solution 
$(d^*,b^*) \in  \Z^n \times \Z^n$ of {\rm (DA)}
such that 
\[
 \|(d^*+b^*)  - (d+b)\|_1 \le 8 \lambda n.
\]
\end{theorem}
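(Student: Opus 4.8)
The plan is to prove this by a simultaneous-exchange argument, the same technique underlying the proofs of Theorem~\ref{thm:Mnat-convex-laminar} and Proposition~\ref{prop:DA-convex-B}. First I would fix the given $\lambda$-optimal solution $(d,b)$ and, among all optimal solutions of {\rm (DA)}, choose one $(d^*,b^*)$ that minimizes the full distance $\|d^*-d\|_1+\|b^*-b\|_1$ in the $2n$-dimensional space. Since $\|(d^*+b^*)-(d+b)\|_1\le\|d^*-d\|_1+\|b^*-b\|_1$, it suffices to bound this larger quantity by $8\lambda n$, so I would assume for contradiction that it exceeds $8\lambda n$. Because $c(d,b)=\sum_{i} c_i(d(i),b(i))$ is a direct sum of two-variable multimodular (equivalently \Mnat-convex, by Proposition~\ref{prop:f2-Mnat-multi}) functions, the exchange inequalities of Proposition~\ref{prop:2multimod} are available coordinatewise, and these are the only convexity input the argument needs.

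The heart of the argument is to produce two \emph{conjugate} moves that push $(d,b)$ and $(d^*,b^*)$ toward each other. Classifying each station $i\in N$ by the signs of $d^*(i)-d(i)$ and $b^*(i)-b(i)$, and using that both solutions lie on the hyperplane $d(N)+b(N)=D+B$ together with $b(N)\le B$, I would argue that a total discrepancy exceeding $8\lambda n$ forces, for at least one of the six elementary move types $N_1,\dots,N_6$, a configuration of indices whose relevant coordinate gaps are each at least $\lambda$. For those indices I would build a $\lambda$-scaled elementary move carrying $(d,b)$ to a feasible point $(d',b')\in R$ with each component shifted by $0$ or $\pm\lambda$, so that $\lambda$-optimality gives $c(d',b')\ge c(d,b)$, together with the matching unit move carrying $(d^*,b^*)$ to a feasible $(d'',b'')\in R$, so that optimality gives $c(d'',b'')\ge c(d^*,b^*)$. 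Applying Proposition~\ref{prop:2multimod} to each affected coordinate yields $c(d,b)+c(d^*,b^*)\ge c(d',b')+c(d'',b'')$; combined with the two one-sided inequalities this forces equality throughout, whence $(d'',b'')$ is again optimal but strictly closer to $(d,b)$, contradicting the minimal choice of $(d^*,b^*)$.

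The main obstacle will be the feasibility bookkeeping for the coupled constraints of {\rm (DA)}: both moves must preserve $d(N)+b(N)=D+B$, respect $b(N)\le B$, keep $d,b\ge\0$, and keep $x=d+b$ inside both $\ell\le x\le u$ and $\bar d+\bar b-\gamma\1\le x\le\bar d+\bar b+\gamma\1$. The single scalar inequality $b(N)\le B$ is the delicate one: raising $b$ at one station must be compensated either by slack in $b(N)\le B$ or by lowering $b$ elsewhere, which is exactly why the six compound move types $N_1,\dots,N_6$, rather than single exchanges in $x$, are needed, as in Lemma~\ref{lem:sra-opt-update}. Controlling how much room each coordinate role in such a compound move must have to accommodate a full $\lambda$-step without overshooting $(d^*,b^*)$ or violating a box is where the constant enters: a factor two for the two variable groups $d$ and $b$, together with the need to locate both a suitable source and a suitable sink index within the worst move type, makes a total discrepancy exceeding $8\lambda n$ enough, after averaging over the $n$ stations, to guarantee the required gaps. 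I would therefore organize the proof as a case analysis over which move type is realizable, dispatching each of $N_1,\dots,N_6$ by the appropriate inequality of Proposition~\ref{prop:2multimod}, in direct analogy with the treatment of cases (C1)--(C4) in the proof of Theorem~\ref{thm:Mnat-convex-laminar}.
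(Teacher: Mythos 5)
Your overall strategy matches the paper's: fix the minimal-distance optimal solution $(d^*,b^*)$, classify coordinates by the signs and magnitudes of the gaps, and play $\lambda$-optimality of $(d,b)$ against optimality/minimality of $(d^*,b^*)$ through conjugate exchange moves. However, the central inequality you invoke is false. Proposition \ref{prop:2multimod} exchanges exactly \emph{one unit on each side}; you pair a $\lambda$-scaled move of $(d,b)$ with a \emph{unit} move of $(d^*,b^*)$ and claim $c(d,b)+c(d^*,b^*)\ge c(d',b')+c(d'',b'')$. This mixed pairing changes the relevant coordinate sums by $\lambda-1\neq 0$, so no exchange inequality can deliver it: already for a linear function $c_i(\eta,\zeta)=p_i\eta$ (linear functions are multimodular, satisfying all defining inequalities with equality), the difference between the two sides of your inequality is $(\lambda-1)(p_i-p_j)$, which can have either sign for $\lambda\ge 2$. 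The paper circumvents exactly this obstacle by a chain argument: it applies the unit--unit exchange of Proposition \ref{prop:2multimod} between $(d-k\chi_i+k\chi_j,\,b+k\chi_i-k\chi_j)$ and $(d^*,b^*)$ for $k=0,1,\dots,\lambda-1$; at each step the unit-moved point $(d^*+\chi_i-\chi_j,\,b^*-\chi_i+\chi_j)$ is feasible and strictly closer to $(d,b)$, so by the minimal-distance choice its value is \emph{strictly} larger than $c(d^*,b^*)$, which converts each exchange inequality into a strict decrease along the chain and yields $c(d,b)>c(d-\lambda\chi_i+\lambda\chi_j,\,b+\lambda\chi_i-\lambda\chi_j)$, contradicting $\lambda$-optimality. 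Without this (or an equivalent device, such as a symmetric $\lambda$--$\lambda$ pairing justified by iterated unit exchanges), your contradiction does not go through.

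The second gap is the counting step that produces the constant $8$, which you leave as an unsupported averaging claim. The paper makes this precise by introducing six index sets $I_1,\dots,I_6$ (defined by which of the gaps $d-d^*$, $b-b^*$, $x-x^*$ are at least $\lambda$ in magnitude and in which direction), proving via the exchange argument above that specific pairs and triples of them cannot be simultaneously nonempty --- where the delicate constraint $b(N)\le B$ enters through its slack (e.g., $I_2=\emptyset$ when $b^*(N)<B$, and $I_1=\emptyset$ when $b(N)-B\le-\lambda$) --- and then running a four-case analysis in which the surviving emptiness patterns give $\|x-x^*\|_1\le 4\lambda n$ or $\le 8\lambda n$ by elementary counting using $x(N)=x^*(N)$. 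Note also that the paper bounds $\|x-x^*\|_1$ directly, whereas you propose to bound the strictly larger quantity $\|d^*-d\|_1+\|b^*-b\|_1$ by $8\lambda n$; in the cases where the paper only controls $\|x-x^*\|_1$ (its Cases 1 and 2), your stronger bound is not established and may fail for the minimal-distance optimal solution, so your reduction at the outset also needs repair.
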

\noindent
 Proof is given later in this subsection.

\begin{flushleft}
 \textbf{Algorithm} {\sc ProximityScalingDA} 
\\
 \textbf{Step 0:} 
 Let
$(d_0, b_0)$ be an arbitrarily chosen  feasible solution
of (DA) and $x_0 = d_0+b_0$.
 Set $\lambda = (D+B)/4n$ and  $p:=1$.
\\
 \textbf{Step 1:} 
 Let $b_{p-1}' \in \Z^n$ be a vector such that
 $(x_{p-1} - b_{p-1}', b_{p-1}')$ is a bike-optimal solution of
(DA$(\lambda)$).
\\
 \textbf{Step 2:} 
 Apply the  algorithm {\sc SteepestDescentDA} 
to  (DA$(\lambda)$) with the initial solution $(x_{p-1} - b_{p-1}', b_{p-1}')$
to find a $\lambda$-optimal solution $(d_p, b_p)$.
\\
 \textbf{Step 3:} 
 If $\lambda = 1$, then output $(d_p, b_p)$ and stop.
 Otherwise, set $x_p = d_p+b_p$, $\lambda := \lfloor (\lambda/2) \rfloor$,
$p:=p+1$, and go to Step 1.
\end{flushleft}

 We analyze the time complexity of the algorithm  {\sc ProximityScalingDA}. 
 The definition of the initial $\lambda$ in Step 0 implies that
there exists a $\lambda$-optimal solution $(d,b)$ 
with $\|(d+b) - x_0\|_1 \le 8 \lambda n$.
 Also, in the $p$-th iterations with $p\ge 2$, 
Theorem \ref{thm:bdr-proximity} implies that
there exists a $\lambda$-optimal solution $(d,b)$ 
with $\|(d+b) - x_{p-1}\|_1 \le 8 \lambda n$.
 Hence, it follows from Theorem \ref{thm:sda-DA} that
each iteration, except for Step 1, can be done in
$\Oh(n\log n)$ time.
 We can also show in a similar way that in Step 1, the vector 
$b_{p-1}'$ can be computed by using a variant of steepest descent algorithm
with the initial vector $b_{p-1}$,
and prove that its running time is $\Oh(n \log n)$.
 Hence, each iteration of the algorithm runs in $\Oh(n \log n)$.
 Since the number of iterations is $\Oh(\log ((D+B)/n))$,
we obtain the following bound for the algorithm {\sc ProximityScalingDA}. 

\begin{theorem}
The algorithm {\sc ProximityScalingDA}
finds an optimal solution of the problem {\rm (DA)}  
in $\Oh(n \log n \log ((D+B)/n))$ time.
\end{theorem}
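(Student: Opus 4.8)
The plan is to bound the total running time as the product of two quantities: the number of scaling phases, which I will show is $\Oh(\log((D+B)/n))$, and the cost of a single phase, which I will show is $\Oh(n\log n)$.

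First I would count the phases. The scale parameter starts at $\lambda = (D+B)/4n$ in Step~0 and is replaced by $\lfloor \lambda/2\rfloor$ in Step~3 until it reaches $1$. Hence the number of phases is $\Oh(\log_2((D+B)/4n)) = \Oh(\log((D+B)/n))$, and it remains to prove that each phase costs $\Oh(n\log n)$.

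Next I would analyze a single phase, whose dominant work is the call to {\sc SteepestDescentDA} on the scaled problem (DA$(\lambda)$) in Step~2. Under the substitution $d = \lambda\eta + \check d$, $b = \lambda\zeta + \check b$, the problem (DA$(\lambda)$) is an instance of (DA) with multimodular costs $c_i^\lambda$, so Theorem~\ref{thm:sda-DA} applies and gives running time $\Oh(n + \nu\log n)$, where $\nu$ is the number of unit (scaled) steepest-descent moves from the warm-started initial solution to a nearest optimal solution of (DA$(\lambda)$); equivalently $\lambda\nu$ is the original L1-distance in the $(d+b)$-coordinates. The crux is to show $\nu = \Oh(n)$. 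For the first phase this follows from the choice $\lambda = (D+B)/4n$: since every feasible $x = d+b$ satisfies $x(N) = D+B$ and $x \ge \0$, the feasible region has L1-diameter at most $2(D+B) = 8\lambda n$, so any optimum of (DA$(\lambda)$) lies within $8\lambda n$ of $x_0$, i.e.\ within $\nu \le 8n = \Oh(n)$ scaled moves. For a phase $p \ge 2$, the warm start is the bike-optimal reconstruction of $x_{p-1}$, where $(d_{p-1},b_{p-1})$ is the $\lambda_{p-1}$-optimal output of the previous phase with $\lambda_{p-1} \le 2\lambda + 1$. Applying the proximity theorem (Theorem~\ref{thm:bdr-proximity}) at scale $\lambda_{p-1}$ yields an optimal solution of (DA) within L1-distance $8\lambda_{p-1}n = \Oh(\lambda n)$ of $x_{p-1}$; since such an optimum is in particular $\lambda$-optimal, there is a $\lambda$-optimal solution — hence a nearest optimum of (DA$(\lambda)$) reachable by the descent — within $\Oh(\lambda n)$ of the warm start, i.e.\ $\nu = \Oh(n)$. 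Thus Step~2 costs $\Oh(n\log n)$, each of the $\Oh(n)$ descent iterations being realized in $\Oh(\log n)$ time by the six binary heaps of Section~\ref{sec:sda-drp}. I would then note that Step~1 computes the bike-optimal vector $b'_{p-1}$ by an analogous steepest-descent variant warm-started at $b_{p-1}$, which by the same locality argument runs in $\Oh(n\log n)$, and that Step~3 is $\Oh(n)$ bookkeeping. Multiplying the two bounds gives $\Oh(n\log n\log((D+B)/n))$.

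The main obstacle I anticipate is the proximity step for phases $p \ge 2$: one must verify that the grid of (DA$(\lambda)$) passes through $x_{p-1}$, so that the proximity ball furnished by Theorem~\ref{thm:bdr-proximity} is centered at the warm start, and that the $\Oh(\lambda_{p-1} n)$ bound translates, after dividing by $\lambda$ and absorbing the floor through $\lambda_{p-1} \le 2\lambda + 1$, into $\Oh(n)$ scaled descent iterations rather than merely $\Oh(n)$ original ones. A secondary but necessary check is that the bike-optimal recomputation in Step~1 inherits an $\Oh(n)$ warm-start bound so that it does not dominate the per-phase cost; this rests on a locality property for the resource-allocation subproblem (SRA) analogous to Lemma~\ref{lem:sra-opt-update}, ensuring that re-optimizing $b$ on the refined grid requires only $\Oh(n)$ single-coordinate updates.
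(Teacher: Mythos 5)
Your proposal is correct and takes essentially the same route as the paper: count $\Oh(\log((D+B)/n))$ scaling phases, and bound each phase by $\Oh(n\log n)$ by combining the warm-started steepest-descent bound of Theorem~\ref{thm:sda-DA} (with $\nu=\Oh(n)$) with the diameter bound for the first phase and the proximity theorem (Theorem~\ref{thm:bdr-proximity}) for later phases, handling Step~1 analogously. Your write-up is in fact more explicit than the paper's about the grid-alignment and $\lambda_{p-1}\le 2\lambda+1$ constant-factor issues, which the paper silently absorbs.
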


\subsubsection{Proof of Theorem \ref{thm:bdr-proximity}}

Let $(d^*,b^*)$ be an optimal solution of {\rm (DA)}
that minimizes the value $\|d^* - d\|_1 + \|b^* - b\|_1$.
 We prove that $(d^*,b^*)$ satisfies the inequality
$ \|x^*  - x\|_1 \le 8 \lambda n$
with $x =d+b$ and $x^* = d^*+b^*$.

 In the proof we consider the following six sets.
\begin{align}
& \label{eqn:prox-cond-1} 
I_1 = \{i \in N  \mid
d(i)- d^*(i) \ge \lambda,\ b(i) - b^*(i) \le -\lambda \},
\\
& \label{eqn:prox-cond-2} 
I_2 = \{i \in N  \mid
d(j)- d^*(j) \le -\lambda,\ b(j) - b^*(j) \ge \lambda\},
\\
& \label{eqn:prox-cond-3} 
I_3 = \{i \in N  \mid
x(i)- x^*(i) \ge \lambda,\  d(i) - d^*(i) \ge \lambda\},
\\
& \label{eqn:prox-cond-4} 
I_4 = \{i \in N  \mid
x(j)- x^*(j) \le -\lambda,\  d(j) - d^*(j) \le -\lambda\},
\\
& \label{eqn:prox-cond-5} 
I_5 = \{i \in N  \mid
x(i)- x^*(i) \ge \lambda,\  b(i) - b^*(i) \ge \lambda\},
\\
& \label{eqn:prox-cond-6} 
I_6 = \{i \in N  \mid 
x(j)- x^*(j) \le -\lambda,\  b(j) - b^*(j) \le -\lambda\}.
\end{align}

\begin{lemma}\ \\
\label{lem:setI-1}
{\rm (i)} At least one of $I_1$ and $I_2$ is an empty set.\\
{\rm (ii)} If $b^*(N) < B$ then $I_2 = \emptyset$ holds;
if $b(N) - B \le -\lambda$ then $I_1 = \emptyset$ holds.
\end{lemma}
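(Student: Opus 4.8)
The plan is to treat both parts with a single device. Supposing toward a contradiction that the relevant set is nonempty, I pick indices realizing it, perform a $\pm\lambda$ exchange on $(d,b)$ \emph{toward} $(d^*,b^*)$ together with the opposite $\pm\lambda$ exchange on $(d^*,b^*)$ \emph{toward} $(d,b)$, and play the $\lambda$-optimality of $(d,b)$ against the optimality \emph{and} the distance-minimality of $(d^*,b^*)$. Throughout I use that each $c_i$ is multimodular, hence \Mnat-convex by Proposition~\ref{prop:f2-Mnat-multi}, and in particular that the anti-diagonal restriction $t\mapsto c_i(\eta-t,\zeta+t)$ is convex (this is Proposition~\ref{prop:2multimod}(i) applied to $(\eta{+}1,\zeta{-}1)$ and $(\eta{-}1,\zeta{+}1)$).

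First I would isolate the single nontrivial cost estimate. For $i\in I_1$ one has $d(i)-d^*(i)\ge\lambda$ and $b^*(i)-b(i)\ge\lambda$, and I claim the $\lambda$-contraction inequality
\[
c_i(d(i)-\lambda,b(i)+\lambda)+c_i(d^*(i)+\lambda,b^*(i)-\lambda)\le c_i(d(i),b(i))+c_i(d^*(i),b^*(i)),
\]
together with its mirror image for $i\in I_2$. The cleanest justification uses that a two-variable multimodular function admits a decomposition $c_i(\eta,\zeta)=\psi(\eta+\zeta)+e_1(\eta)+e_2(\zeta)$ with $\psi,e_1,e_2$ convex: the anti-diagonal moves fix $\eta+\zeta$, so the $\psi$-term cancels, while the hypotheses $d(i)-d^*(i)\ge\lambda$ and $b^*(i)-b(i)\ge\lambda$ say exactly that the pairs $\{d(i),d^*(i)\}$ and $\{b(i),b^*(i)\}$ are \emph{contracted} (same sum, no larger spread), so each of $e_1,e_2$ does not increase by convexity. (Alternatively one iterates Proposition~\ref{prop:2multimod}(i), covering the crossover by the anti-diagonal convexity above.) I expect this reconciliation of the $\pm\lambda$ scale of the $\lambda$-optimality with the unit-step multimodular inequalities to be the main obstacle; once it is in hand the rest is bookkeeping.

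For part~(i), assume $I_1\ne\emptyset$ and $I_2\ne\emptyset$ and fix $i\in I_1$, $j\in I_2$. I form $(d',b')$ from $(d,b)$ by $d'(i)=d(i)-\lambda,\ b'(i)=b(i)+\lambda,\ d'(j)=d(j)+\lambda,\ b'(j)=b(j)-\lambda$, and $(\tilde d,\tilde b)$ from $(d^*,b^*)$ by the opposite shifts. This balanced double exchange leaves $d(N)$, $b(N)$, and $x=d+b$ unchanged, so every linear, box, and L1 constraint is preserved, and the gaps $\ge\lambda$ give nonnegativity; hence both are feasible. The $\lambda$-optimality of $(d,b)$ gives $c(d',b')\ge c(d,b)$ and the optimality of $(d^*,b^*)$ gives $c(\tilde d,\tilde b)\ge c(d^*,b^*)$, while summing the $\lambda$-contraction inequalities at $i$ and $j$ gives the reverse; hence both hold with equality. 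Then $(\tilde d,\tilde b)$ is an optimal solution of (DA) with $\|\tilde d-d\|_1+\|\tilde b-b\|_1$ smaller by $4\lambda$, contradicting the choice of $(d^*,b^*)$.

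For part~(ii) the exchange is on a single coordinate, which shifts $b(N)$ by $\lambda$, and the slack hypotheses are exactly what keep it feasible. If $b(N)\le B-\lambda$ and $i\in I_1$, the move $d(i)-\lambda,\ b(i)+\lambda$ raises $b(N)$ by $\lambda$ (still $\le B$), while the opposite move on $(d^*,b^*)$ lowers $b^*(N)$; as in part~(i), $\lambda$-optimality, optimality, and the $\lambda$-contraction inequality force both cost-neutral, and the moved $(d^*,b^*)$ is a strictly closer optimum, so $I_1=\emptyset$. If $b^*(N)<B$ and $j\in I_2$, the $(d,b)$-move $d(j)+\lambda,\ b(j)-\lambda$ lowers $b(N)$ (feasible), but the $(d^*,b^*)$-move would raise $b^*(N)$ and is feasible only by one unit; here I would instead feed the unit-step optimality of $(d^*,b^*)$ (valid since $b^*(N)<B$) through the convexity of the increments of $e_1,e_2$ to obtain $c(d',b')\le c(d,b)$, so that $\lambda$-optimality forces equality along the whole chain and in particular makes the unit move $d^*(j)-1,\ b^*(j)+1$ cost-neutral. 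That unit move yields a strictly closer optimum, contradicting minimality, so $I_2=\emptyset$.
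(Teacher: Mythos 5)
Your reduction of the lemma to the single ``$\lambda$-contraction'' inequality is a sensible plan, and that inequality is in fact true; but neither of the two justifications you offer for it is valid, and since this step is (as you say yourself) the crux, it is a genuine gap. First, the decomposition claim is false: a two-variable multimodular function need not have the form $\psi(\eta+\zeta)+e_1(\eta)+e_2(\zeta)$ with convex pieces. For any function of that form the mixed difference $\phi(\eta+1,\zeta+1)-\phi(\eta+1,\zeta)-\phi(\eta,\zeta+1)+\phi(\eta,\zeta)$ depends on $\eta+\zeta$ only; but the function $\phi(\eta,\zeta)=\max(0,\eta,\eta+\zeta)$ (translate it into $\Z_+^2$; it is multimodular, arising from the L$^\natural$-convex function $\max(0,p_1,p_2)$ via $p_1=\eta$, $p_2=\eta+\zeta$) has mixed difference $1$ at $(\eta,\zeta)=(-1,0)$ and $0$ at $(0,-1)$, two points with the same $\eta+\zeta$. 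Second, the fallback of iterating Proposition~\ref{prop:2multimod}(i) cannot be ``covered at the crossover'' by anti-diagonal convexity: Proposition~\ref{prop:2multimod}(i) applies only while the two points have not crossed, i.e., for roughly $\min\{d(i)-d^*(i),\,b^*(i)-b(i)\}/2$ of the $\lambda$ steps, and past the crossover the sum can strictly increase again. Concretely, for the multimodular function $c_i(\eta,\zeta)=\eta^2$ with $(d(i),b(i))=(2,0)$, $(d^*(i),b^*(i))=(0,3)$ and $\lambda=2$, the sums along your simultaneous contraction path are $4,\,2,\,4$: the stepwise inequality fails at the second step even though the endpoint inequality holds (with equality). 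So no argument that establishes monotone decrease along this path can exist; proving the endpoint inequality needs different tools (e.g., convexity of $t\mapsto c_i(d(i)-t,b(i)+t)+c_i(d^*(i)+t,b^*(i)-t)$ combined with a submodularity-type bound at a suitable $T\ge\lambda$, via the associated L$^\natural$-convex function). Your treatment of part (ii) invokes the decomposition again and so inherits the same gap.

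The paper's proof sidesteps long-step exchanges entirely, and you could adopt the same device: keep $(d^*,b^*)$ \emph{fixed} and move only $(d,b)$, one unit at a time, to $(d-k\chi_i+k\chi_j,\,b+k\chi_i-k\chi_j)$. For every $k\le\lambda-1$ the gaps at $i$ and $j$ are still at least $1$, so Proposition~\ref{prop:2multimod}(i) applies to the pair consisting of the current point and the fixed $(d^*,b^*)$; the right-hand side contains $c(d^*+\chi_i-\chi_j,\,b^*-\chi_i+\chi_j)$, and this is \emph{strictly} larger than $c(d^*,b^*)$ because that perturbed solution is feasible and strictly closer to $(d,b)$, so equality would contradict the distance-minimal choice of $(d^*,b^*)$. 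This yields a strict decrease at every one of the $\lambda$ unit steps, hence $c(d,b)>c(d-\lambda\chi_i+\lambda\chi_j,\,b+\lambda\chi_i-\lambda\chi_j)$, contradicting $\lambda$-optimality; since only one endpoint moves, no crossover ever occurs. The same device on a single coordinate, with the slack hypotheses $b^*(N)<B$ (resp.\ $b(N)\le B-\lambda$) guaranteeing feasibility of the unit moves on the appropriate side, gives part (ii).
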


\begin{proof}
 We first prove (i).
 Assume, to the contrary, that both of
 $I_1 \ne \emptyset$ and $I_2 \ne \emptyset$ hold.
 Then, there exist distinct $i,j \in N$ such that
 \begin{align*}
 &
 d(i)- d^*(i) \ge \lambda,\  b(i) - b^*(i) \le - \lambda,\qquad
 d(j)- d^*(j) \le - \lambda, \  b(j) - b^*(j) \ge  \lambda.
 \end{align*}
 We consider the pair of vectors
 $(d', b') \equiv (d - \lambda\chi_i + \lambda \chi_j, b + \lambda\chi_i - \lambda \chi_j)$,
 which is  a feasible solution of (DR-AP$(\lambda)$)
 since
 $d+b = d' + b'$ and $b(N)= b'(N)$.
 We show below that $c(d',b') < c(d,b)$ holds,
 a contradiction to the choice of $(d,b)$.

 By Proposition \ref{prop:2multimod} (i), we have
 \begin{align*}
 &  c_i(d(i), b(i))+c_i(d^*(i), b^*(i)) 
 \ge c_i(d(i)-1, b(i)+1)+c_i(d^*(i)+1, b^*(i)-1),\\
 &  c_j(d(j), b^*(j))+c_i(d^*(j), b^*(j)) 
 \ge c_i(d(j)+1, b(j)-1)+c_i(d^*(j)-1, b^*(j)+1).
 \end{align*}
 This implies
 \begin{equation}
 \label{eqn:prox-1}
  c(d,b) + c(d^*, b^*) \ge 
 c(d-\chi_i+\chi_j, b+\chi_i - \chi_j) + 
 c(d^*+\chi_i-\chi_j, b^*-\chi_i + \chi_j).
 \end{equation}
 Note that $(d'', b'') = (d^*+\chi_i-\chi_j, b^*-\chi_i + \chi_j)$
 is also a feasible solution of (DR-AP) since
$d''+b'' = d^*+b^* $ and $b''(N)=b^*(N)$.
 Since $(d^*+\chi_i-\chi_j, b^*-\chi_i + \chi_j)$ satisfies
 \[
 \|(d^*+\chi_i-\chi_j) - d\|_1 + \|(b^*-\chi_i + \chi_j) - b\|_1
 < \|d^* - d\|_1 + \|b^* -b \|_1, 
 \]
 we have
 \[
 c(d^*, b^*) <c(d^*+\chi_i-\chi_j, b^*-\chi_i + \chi_j),
 \]
 which, together with \eqref{eqn:prox-1}, implies
 $c(d,b) > c(d-\chi_i+\chi_j, b+\chi_i - \chi_j)$.

 In a similar way, we can also prove
 the inequalities
 \[
  c(d-\chi_i+\chi_j, b+\chi_i - \chi_j) 
 >  c(d- 2 \chi_i+ 2 \chi_j, b+ 2 \chi_i - 2 \chi_j) 
 >  \cdots
 >
 c(d- \lambda\chi_i+ \lambda \chi_j, b+ \lambda \chi_i - \lambda\chi_j),
 \]
 from which $c(d,b) > c(d', b')$ follows.

 Proof of (ii) is similar to (i) and omitted.
\end{proof}

\begin{lemma}
\label{lem:setI-2} 
At least one of $I_3 = \emptyset$ and  $I_4 = \emptyset$ holds.
\end{lemma}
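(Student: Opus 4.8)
The plan is to argue by contradiction, following the template of the proof of Lemma~\ref{lem:setI-1}(i), but exchanging only the $d$-coordinates (keeping $b$ fixed) and invoking the $x(N)$-changing inequality Proposition~\ref{prop:2multimod}(ii) in place of \eqref{eqn:2multimod-1}. Suppose both $I_3 \ne \emptyset$ and $I_4 \ne \emptyset$, and pick $i \in I_3$ and $j \in I_4$ (necessarily $i \ne j$). By definition $d(i) - d^*(i) \ge \lambda$ and $x(i) - x^*(i) \ge \lambda$, while $d(j) - d^*(j) \le -\lambda$ and $x(j) - x^*(j) \le -\lambda$. The natural correction moves $d$ toward $d^*$ at both $i$ and $j$ while leaving $b$ untouched; since $x = d+b$, this automatically moves $x$ toward $x^*$ as well. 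Concretely, I would consider $(d',b') = (d - \lambda\chi_i + \lambda\chi_j,\ b)$. First I would check that $(d',b')$ is feasible for (DA): the equalities $d'(N)+b'(N)=D+B$ and $b'(N)=b(N)\le B$ are immediate, and the interval, box, and nonnegativity constraints follow from $x^*(i) \le x'(i)=x(i)-\lambda$, $x'(j)=x(j)+\lambda \le x^*(j)$, $d'(i)\ge d^*(i)\ge 0$, and $d'(j)\le d^*(j)$, using feasibility of $(d,b)$ and $(d^*,b^*)$. Moreover $(d',b')$ lies in the $\lambda$-box around $(d,b)$ (only $d$ changes, by $\pm\lambda$), so $\lambda$-optimality of $(d,b)$ gives $c(d',b') \ge c(d,b)$.

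Next I would produce the opposite strict inequality. Consider the one-unit correction of the optimal solution, $(d'',b'') = (d^* + \chi_i - \chi_j,\ b^*)$, which moves $(d^*,b^*)$ one step toward $(d,b)$ in the $d$-coordinates. It is feasible by the same checks, and since $d(i) - d^*(i) \ge \lambda \ge 2$ and $d^*(j) - d(j) \ge \lambda \ge 2$ one computes $\|d'' - d\|_1 + \|b'' - b\|_1 = \|d^* - d\|_1 + \|b^* - b\|_1 - 2$, strictly smaller; by the distance-minimality of $(d^*,b^*)$ among optimal solutions it follows that $(d'',b'')$ is not optimal, i.e. $c(d^* + \chi_i - \chi_j, b^*) > c(d^*,b^*)$.

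I would then link the two moves through Proposition~\ref{prop:2multimod}(ii): applied at $i$ with $(\eta,\zeta,\eta',\zeta') = (d(i),b(i),d^*(i),b^*(i))$ (valid since $d(i) > d^*(i)$ and $x(i) > x^*(i)$) and at $j$ with $(\eta,\zeta,\eta',\zeta') = (d^*(j),b^*(j),d(j),b(j))$ (valid since $d^*(j) > d(j)$ and $x^*(j) > x(j)$), the two coordinatewise inequalities sum to
\[
c(d,b) + c(d^*,b^*) \ge c(d-\chi_i+\chi_j,\ b) + c(d^*+\chi_i-\chi_j,\ b^*).
\]
Combined with $c(d^*+\chi_i-\chi_j,b^*) > c(d^*,b^*)$ this yields $c(d,b) > c(d-\chi_i+\chi_j, b)$. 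Repeating the argument $\lambda$ times gives the chain $c(d,b) > c(d-\chi_i+\chi_j, b) > \cdots > c(d',b')$, hence $c(d,b) > c(d',b')$, contradicting $\lambda$-optimality and completing the proof.

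The one delicate point, exactly as in Lemma~\ref{lem:setI-1}(i), is to confirm that every intermediate iterate $d - k\chi_i + k\chi_j$ (for $k \le \lambda$) remains feasible and continues to satisfy the strict hypotheses $\eta > \eta'$ and $\eta+\zeta > \eta'+\zeta'$ of Proposition~\ref{prop:2multimod}(ii); these hold because after $k$ steps the surviving gaps are $d(i)-k-d^*(i) \ge \lambda-k \ge 1$ and $x(i)-k-x^*(i)\ge \lambda-k\ge 1$ for $k \le \lambda-1$ (and symmetrically at $j$). This is where the defining assumption that the gaps are at least $\lambda$ is used, and it is the main thing to verify carefully; everything else is a routine adaptation of the proof of Lemma~\ref{lem:setI-1}.
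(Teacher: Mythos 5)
Your proof is correct and is essentially the paper's intended argument: the paper omits this proof, saying only that it is similar to that of Lemma~\ref{lem:setI-1}, and your proposal is precisely that adaptation --- exchange $\lambda$ units in the $d$-coordinates at $i \in I_3$ and $j \in I_4$ while keeping $b$ fixed, use Proposition~\ref{prop:2multimod}(ii) in place of part (i), and derive the contradiction from $\lambda$-optimality of $(d,b)$ together with the distance-minimality of $(d^*,b^*)$. The feasibility checks, the strict inequality $c(d^*+\chi_i-\chi_j,b^*) > c(d^*,b^*)$, and the verification that the hypotheses of Proposition~\ref{prop:2multimod}(ii) persist along the $\lambda$-step chain are all handled correctly.
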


\begin{proof}
 Proof is similar to that for Lemma \ref{lem:setI-1}
and omitted.
\end{proof}

\begin{lemma}
\label{lem:setI-3} 
At least one of $I_5 = \emptyset$ and 
$I_6 = \emptyset$ holds.
\end{lemma}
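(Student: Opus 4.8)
The plan is to mirror the proof of Lemma~\ref{lem:setI-1}, but to exchange along the $b$-coordinate rather than the $d$-coordinate. Suppose, for contradiction, that both $I_5 \neq \emptyset$ and $I_6 \neq \emptyset$, and pick $i \in I_5$ and $j \in I_6$; these are distinct because $x(i) > x^*(i)$ while $x(j) < x^*(j)$. Thus $b(i) - b^*(i) \ge \lambda$, $x(i) - x^*(i) \ge \lambda$, $b(j) - b^*(j) \le -\lambda$, and $x(j) - x^*(j) \le -\lambda$. The move I would use is the pure bike-shift $(d,\, b - \lambda\chi_i + \lambda\chi_j)$ on $(d,b)$, matched with the single-step complementary shift $(d^*,\, b^* + \chi_i - \chi_j)$ on $(d^*,b^*)$. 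First I would check feasibility: since $d$ and $d^*$ are untouched and both $b(N)$ and $x(N)$ are preserved, and since all the gaps above are at least $\lambda$, every intermediate vector $(d,\, b - t\chi_i + t\chi_j)$ with $0 \le t \le \lambda$ keeps $b \ge \0$ and stays within $[\ell,u]$ and the window $\bar d + \bar b \pm \gamma\1$ (each active bound being inherited from $x$ or from $x^*$); likewise $(d^*,\, b^* + \chi_i - \chi_j)$ is feasible for {\rm (DA)} and strictly closer to $(d,b)$, its distance $\|d^*-d\|_1 + \|b^*-b\|_1$ dropping by $2$.

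The cost bookkeeping requires the exchange inequality in the $b$-direction: for a multimodular $c_s$, if $\zeta > \zeta'$ and $\eta + \zeta > \eta' + \zeta'$ then $c_s(\eta,\zeta) + c_s(\eta',\zeta') \ge c_s(\eta, \zeta-1) + c_s(\eta', \zeta'+1)$. This is precisely Proposition~\ref{prop:2multimod}(ii) applied to the transposed function $(\zeta,\eta)\mapsto c_s(\eta,\zeta)$, and that transpose is again multimodular: under swapping the two arguments the first multimodularity inequality maps to itself, while the second and third map to one another. Applying this at station $i$ (with $(\eta,\zeta) = (d(i),b(i))$, $(\eta',\zeta') = (d^*(i),b^*(i))$) and its mirror at station $j$, leaving the other stations untouched, yields
\[
 c(d,b) + c(d^*,b^*) \ge c(d,\, b - \chi_i + \chi_j) + c(d^*,\, b^* + \chi_i - \chi_j).
\]
Since $(d^*,\, b^* + \chi_i - \chi_j)$ is feasible and strictly closer to $(d,b)$, minimality of $(d^*,b^*)$ forces $c(d^*,b^*) < c(d^*,\, b^* + \chi_i - \chi_j)$, whence $c(d,b) > c(d,\, b - \chi_i + \chi_j)$.

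Finally I would iterate this one-step argument with the fixed $(d^*,b^*)$. For each $t \in [0,\lambda-1]$ the vector $b^{(t)} = b - t\chi_i + t\chi_j$ still satisfies $b^{(t)}(i) - b^*(i) \ge \lambda - t \ge 1$ and $(d(i)+b^{(t)}(i)) - x^*(i) \ge \lambda - t \ge 1$ at station $i$, and the mirror strict inequalities at $j$, so the same two ingredients—the transposed Proposition~\ref{prop:2multimod}(ii) together with the distance-minimality of $(d^*,b^*)$—give $c(d, b^{(t)}) > c(d, b^{(t+1)})$. Chaining these $\lambda$ strict inequalities produces $c(d,b) > c(d,\, b - \lambda\chi_i + \lambda\chi_j)$. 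But $(d,\, b - \lambda\chi_i + \lambda\chi_j)$ is a feasible solution of {\rm (DA)} all of whose coordinates differ from those of $(d,b)$ by $0$ or $\pm\lambda$, so $\lambda$-optimality of $(d,b)$ demands the reverse inequality—the desired contradiction. The one genuinely non-routine point is the $b$-direction exchange inequality; everything else is the bookkeeping of Lemma~\ref{lem:setI-1}. I expect the transposition-invariance of multimodularity to be the crux, since Proposition~\ref{prop:2multimod} is phrased only for the $d$-coordinate and a naive appeal to (\Mnat-EXC) would leave the exchange direction ambiguous in the subcase $d(i) > d^*(i)$ and $b(i) > b^*(i)$ simultaneously.
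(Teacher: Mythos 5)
Your proof is correct and is exactly the argument the paper intends: the paper omits this proof as ``similar to that for Lemma~\ref{lem:setI-1},'' and you mirror that proof with the exchange performed along the $b$-coordinate, supplying the one ingredient the paper leaves implicit, namely the $b$-direction analogue of Proposition~\ref{prop:2multimod}(ii), which you correctly justify via transposition-invariance of multimodularity (the first defining inequality is self-transpose, the second and third swap). One peripheral slip in your closing remark only: the genuinely ambiguous subcase for a naive appeal to (\Mnat-EXC) is $d(i) < d^*(i)$ with $b(i) > b^*(i)$ (where $\suppm$ of the difference is nonempty and the exchange may go diagonally), whereas in the subcase you name, with both differences positive, $\suppm$ is empty and $j=0$ is forced; this does not affect your proof, which never relies on that naive appeal.
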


\begin{proof}
 Proof is similar to that for Lemma \ref{lem:setI-1}
and omitted.
\end{proof}

\begin{lemma}
\label{lem:setI-4} 
\ \\
{\rm (i)}
At least one of 
$I_4$, $I_5$, and $I_1$ is an empty set.
\\
{\rm (ii)}
If $b^*(N) < B$ then
at least one of $I_4$ and $I_5$ is an empty set.
\\
{\rm (iii)}
At least one of 
$I_3$, $I_6$, and $I_2$ is an empty set.
\\
{\rm (iv)}
 If $b(N) - B \le -\lambda$ then
at least one of $I_3$ and $I_6$ is an empty set.
\end{lemma}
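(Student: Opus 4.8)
The plan is to argue by contradiction, exactly as in Lemmas \ref{lem:setI-1}--\ref{lem:setI-3}. Assuming the sets in question are all nonempty, I would choose one representative index from each and build a pair of opposite modifications: a $\lambda$-neighbor of $(d,b)$ and a single unit modification of $(d^*,b^*)$, coupled coordinate by coordinate through Proposition \ref{prop:2multimod} and the exchange property of \Mnat-convex functions. As in Lemma \ref{lem:setI-1}, the unit modification of $(d^*,b^*)$ is the same at every step and, being strictly closer to $(d,b)$, is strictly more expensive by the distance-minimality of $(d^*,b^*)$; substituting this into the per-coordinate inequalities and telescoping over $\lambda$ unit steps produces a $\lambda$-neighbor of $(d,b)$ of strictly smaller cost, contradicting the $\lambda$-optimality of $(d,b)$.

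Writing $\delta_i=d(i)-d^*(i)$ and $\beta_i=b(i)-b^*(i)$, so that $x(i)-x^*(i)=\delta_i+\beta_i$, the only constraints these small moves can break are the conservation $x(N)=D+B$ (equivalently on $x^*$) and the caps $b(N),b^*(N)\le B$; the box and nonnegativity constraints hold automatically because every gap entering the move has absolute value at least $\lambda$. Conservation of $x(N)$ forces the coordinate increments on each solution to cancel, and this dictates the shape of the move. For (ii) I would use a move supported on $I_4\cup I_5$: at $i\in I_4$ a pure-$d$ transfer (Proposition \ref{prop:2multimod}(ii), legitimate since $\delta_i\le-\lambda$ and $\delta_i+\beta_i\le-\lambda$) lowering $x^*$ toward $x$, balanced by a coordinate of $I_5$ where $x^*$ is raised toward $x$. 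Such a move raises $b^*(N)$ by one, and the hypothesis $b^*(N)<B$ is exactly what keeps the unit $(d^*,b^*)$-modification feasible, while the opposite $\lambda$-move only lowers $b(N)$. Statement (iv) is proved by the analogous move on $I_3\cup I_6$: now the $\lambda$-move on $(d,b)$ is forced to raise $b(N)$ by $\lambda$ through its $I_6$ coordinate, which is why its hypothesis is the stronger $b(N)-B\le-\lambda$. Throughout, the $\lambda$-move must remain on the $\lambda$-optimal solution $(d,b)$ and the unit move on the optimal $(d^*,b^*)$.

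The delicate point, which I expect to be the main obstacle, is that raising $x^*$ toward $x$ at the pivot coordinate $i'\in I_5$ ought to go through $b^*$, i.e.\ via a \emph{pure}-$b$ transfer; the exchange axiom (\Mnat-EXC) yields this only through its $j=0$ alternative, which is forced precisely when $\delta_{i'}\ge0$. I would therefore split on the sign of $\delta_{i'}$. If $\delta_{i'}\ge0$ the pure-$b$ transfer is available and the shift it induces in $x^*$ is compensated by the $I_4$ transfer, as above. If $\delta_{i'}<0$, then $d^*(i')>d(i')$ and $b^*(i')<b(i')$, so Proposition \ref{prop:2multimod}(i) supplies an intra-coordinate diagonal that already drives $(d^*,b^*)$ strictly toward $(d,b)$ while leaving $x^*(i')$ unchanged; this one-coordinate move is self-conserving in $x$ and again raises $b^*(N)$ by one. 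In either branch (ii) follows, and (iv) is obtained by the mirror split on the $d$-gap at the $I_6$ coordinate.

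For the unconditional statements (i) and (iii) the third set furnishes the missing $b$-balance, so no slack is needed. In (i), if some $i'\in I_5$ has $\delta_{i'}<0$ I would pair its intra-coordinate diagonal with the complementary diagonal at a coordinate of $I_1$ (where $\delta\ge\lambda$ and $\beta\le-\lambda$, so Proposition \ref{prop:2multimod}(i) applies); the two diagonals cancel in both $x$ and $b$ on each solution, so the move is feasible with no slack. Otherwise every $i'\in I_5$ has $\delta_{i'}\ge0$, and I would assemble the three-coordinate move combining the pure-$d$ transfer at $I_4$, the pure-$b$ transfer at $I_5$, and the diagonal at $I_1$, whose increments cancel in $x(N)$ and in $b(N)$ on $(d,b)$ and on $(d^*,b^*)$ simultaneously. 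Either way a cheaper $\lambda$-neighbor of $(d,b)$ appears, contradicting $\lambda$-optimality and forcing one of $I_4,I_5,I_1$ to be empty. Statement (iii) is proved by the analogous three-coordinate construction on $(I_3,I_6,I_2)$, with the $d$/$b$ and sign patterns mirrored and the $\lambda$-move again carried by $(d,b)$.
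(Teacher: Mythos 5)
Your proposal is correct and follows essentially the same route as the paper's proof: for (i) the paper picks $i\in I_4$, $j\in I_5$, $s\in I_1$ and uses exactly your three-coordinate paired moves $(d+\lambda\chi_i-\lambda\chi_s,\ b-\lambda\chi_j+\lambda\chi_s)$ and $(d^*-\chi_i+\chi_s,\ b^*+\chi_j-\chi_s)$, deriving the contradiction via Proposition \ref{prop:2multimod}, the distance-minimality of $(d^*,b^*)$, and telescoping as in Lemma \ref{lem:setI-1}, with (ii)--(iv) handled by the analogous two-coordinate variants under the stated slack hypotheses. Your case split on the sign of $\delta_{i'}$ at the $I_5$ coordinate is sound but unnecessary: multimodularity is symmetric in its two arguments (the second and third defining inequalities swap into each other), so Proposition \ref{prop:2multimod}(ii) licenses pure-$b$ transfers directly---indeed the paper applies it this way in the proof of Proposition \ref{prop:DA-convex-B}---and no appeal to (\Mnat-EXC) with the $j=0$ alternative is needed.
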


\begin{proof}
 We prove (i) only.
 Assume, to the contrary, that
all of the sets $I_4$, $I_5$, and $I_1$ 
are nonempty, and let
$i \in I_4$, $j \in I_5$, and $s \in I_1$.
 Then, elements $i,j,s$ are distinct 
 by the definitions of $I_4, I_5$, and $I_1$.
 We denote 
 \begin{align*}
 &  (d', b') = 
 (d + \lambda\chi_i  - \lambda \chi_s ,
 b - \lambda \chi_j + \lambda \chi_s),
 \\
 & (d'', b'') = (d^* -\chi_i + \chi_s, b^* + \chi_j - \chi_s).
 \end{align*}
 Since $(d',b')$ and $(d'', b'')$ satisfy
 \[
 d'(P)+b'(P) = d(P)+b(P), \ b'(N)=b(N), \quad
 d''(P)+b''(P) = d^*(P)+b^*(P), \ b''(N)=b^*(N),
 \]
 $(d',b')$ (resp.,  $(d'', b'')$) is a feasible solution
 of (DR-AP$(\lambda)$) (resp., (DR-AP)).
 Using this fact, we can derive a contradiction as in Lemma \ref{lem:setI-1}.
\end{proof}

\begin{lemma}
\label{lem:i4i6-bound}
 We have $\| x-x^* \|_1 \le 4 \lambda n$
if at least one of the followng two conditions holds:
\begin{quote}
{\rm (a)}  $I_3= I_5  = \emptyset$,
\qquad
{\rm (b)}  $I_4= I_6  = \emptyset$.
\end{quote}
\end{lemma}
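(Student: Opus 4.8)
The plan is to exploit the single identity $x(N)=x^*(N)=D+B$, which holds because $(d,b)$ and $(d^*,b^*)$ are both feasible for (DA), and to reduce the whole statement to a coordinatewise bound. Writing $N^+=\{i\in N\mid x(i)>x^*(i)\}$ and $N^-=\{i\in N\mid x(i)<x^*(i)\}$, the equal-sum identity gives $\sum_{i\in N^+}(x(i)-x^*(i))=\sum_{i\in N^-}(x^*(i)-x(i))$, so that $\|x-x^*\|_1=2\sum_{i\in N^+}(x(i)-x^*(i))=2\sum_{i\in N^-}(x^*(i)-x(i))$. Thus it suffices to bound each positive term $x(i)-x^*(i)$ (in case (a)) or each term $x^*(i)-x(i)$ (in case (b)) by $2\lambda$ and then multiply by $2n$.

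Assume case (a), i.e.\ $I_3=I_5=\emptyset$, and fix $i\in N^+$. I would split into two sub-cases according to the size of $x(i)-x^*(i)$. If $x(i)-x^*(i)\le\lambda-1$, the term is already below $2\lambda$. If instead $x(i)-x^*(i)\ge\lambda$, then $i\notin I_3$ forces $d(i)-d^*(i)\le\lambda-1$ and $i\notin I_5$ forces $b(i)-b^*(i)\le\lambda-1$; since $x=d+b$ gives $x(i)-x^*(i)=(d(i)-d^*(i))+(b(i)-b^*(i))$, we obtain $x(i)-x^*(i)\le2\lambda-2$. In either sub-case $x(i)-x^*(i)\le 2\lambda-2<2\lambda$. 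Summing over the at most $n$ indices of $N^+$ and applying the decomposition above yields $\|x-x^*\|_1\le2(2\lambda-2)n\le4\lambda n$.

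For case (b), $I_4=I_6=\emptyset$, I would argue symmetrically with the roles of $x$ and $x^*$ interchanged: for $i\in N^-$ with $x^*(i)-x(i)\ge\lambda$, emptiness of $I_4$ and $I_6$ forces $d^*(i)-d(i)\le\lambda-1$ and $b^*(i)-b(i)\le\lambda-1$, so $x^*(i)-x(i)\le2\lambda-2$, while the remaining indices contribute at most $\lambda-1$. Summing over $N^-$ gives the same bound $\|x-x^*\|_1\le4\lambda n$.

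Notably, this argument uses neither $\lambda$-optimality of $(d,b)$ nor the minimal-distance choice of $(d^*,b^*)$; those hypotheses have already been spent in Lemmas \ref{lem:setI-1}--\ref{lem:setI-4} to guarantee that the relevant $I_h$ are empty, and here I only translate that emptiness into a distance estimate. The step most likely to trip one up is the equal-sum reduction to a one-sided sum, which is where the factor $2$ enters; the per-coordinate case split is the only other point needing attention, since one must remember that $x(i)-x^*(i)\ge\lambda$ together with $i\notin I_3\cup I_5$ is exactly what caps each difference strictly below $2\lambda$. Beyond these bookkeeping points I do not expect a genuine obstacle.
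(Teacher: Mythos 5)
Your proof is correct and follows essentially the same route as the paper: emptiness of the two index sets caps each one-sided coordinate deviation of $x-x^*$ by (roughly) $2\lambda$, and the equal-sum identity $x(N)=x^*(N)$ converts that one-sided bound into $\|x-x^*\|_1\le 4\lambda n$. The only cosmetic differences are that the paper writes out case (b) and you write out case (a), and your integer bookkeeping yields the slightly sharper per-coordinate bound $2\lambda-2$ in place of the paper's $2\lambda$.
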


\begin{proof}
 Suppose that  $I_4= I_6  = \emptyset$ holds.
 Then, we have $x(i) - x^*(i) \ge - 2\lambda$ for every $i \in N$.
 Let $N_- = \suppm(x-x^*)$.
 Since $x(N)-x^*(N)=0$, we have
\begin{align*}
\| x-x^* \|_1 
& = [ x(N \setminus N_-) - x^*(N \setminus N_-)]  +[ x^*(N_-) - x(N_-)] \\
& =  [ x(N) - x^*(N)]  + 2 [ x^*(N_-) - x(N_-)] \\
& = 4 \lambda |N_-| \le 4\lambda n.
\end{align*}
 Proof for the case with $I_3= I_5  = \emptyset$ is similar.
\end{proof}

\begin{lemma}
\label{lem:i2i4i5-bound}
 We have $\| x-x^* \|_1 \le 8 \lambda n$
if at least one of the followng two conditions holds:
\begin{quote}
{\rm (a)}  $I_2= I_4= I_5  = \emptyset$ and $b(N)-b^*(N) > - \lambda$,
\\
{\rm (b)}  $I_1= I_3= I_6  = \emptyset$ and $b(N)-b^*(N) <  \lambda$.
\end{quote}
\end{lemma}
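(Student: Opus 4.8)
The plan is to reduce the claim to a single bound on the negative excess of $x-x^*$. Since $x(N)=x^*(N)=D+B$ we have $\sum_{i\in N}(x(i)-x^*(i))=0$, so setting $S_-=\sum_{i\in\suppm(x-x^*)}(x^*(i)-x(i))$ the positive and negative excesses coincide and $\|x-x^*\|_1=2S_-$; it therefore suffices to show $S_-\le 4\lambda n$ under condition (a) (condition (b) will follow by symmetry). The difficulty, compared with Lemma~\ref{lem:i4i6-bound}, is that only three of the six sets are now assumed empty, so a purely coordinatewise bound of the form $x(i)-x^*(i)\ge -c\lambda$ is no longer available: on a coordinate $i$ with $x(i)-x^*(i)\le-\lambda$ the component $b(i)-b^*(i)$ can be arbitrarily negative. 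The key idea is to combine a coordinatewise estimate (from $I_4=\emptyset$) with a global summation estimate for $b-b^*$ (from $I_2=I_5=\emptyset$ together with the hypothesis $b(N)-b^*(N)>-\lambda$).

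First I would split the negative-excess coordinates into $K=\{i : -\lambda<x(i)-x^*(i)<0\}$ and $J=\{i : x(i)-x^*(i)\le-\lambda\}$, which together exhaust $\suppm(x-x^*)$. On $K$ each term is at most $\lambda-1$. On $J$, emptiness of $I_4$ forces $d(i)-d^*(i)\ge-\lambda+1$, hence $x^*(i)-x(i)=(d^*(i)-d(i))+(b^*(i)-b(i))\le(\lambda-1)+(b^*(i)-b(i))$; moreover $b(i)-b^*(i)=(x(i)-x^*(i))-(d(i)-d^*(i))\le-1<0$ on $J$. Summing the two parts gives $S_-\le(\lambda-1)\,|K\cup J|+\sum_{i\in J}(b^*(i)-b(i))\le(\lambda-1)n+\beta^-$, where $\beta^-=\sum_{i\in\suppm(b-b^*)}(b^*(i)-b(i))$ and the last inequality uses $J\subseteq\suppm(b-b^*)$.

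Next I would bound $\beta^-$ globally. Writing $\beta^+=\sum_{i\in\suppp(b-b^*)}(b(i)-b^*(i))$, the hypothesis $b(N)-b^*(N)=\beta^+-\beta^->-\lambda$ gives $\beta^-\le\beta^++\lambda$, so it remains to bound $\beta^+$, and here $I_2=I_5=\emptyset$ enter. For any $i$ with $b(i)-b^*(i)\ge\lambda$, emptiness of $I_5$ yields $x(i)-x^*(i)\le\lambda-1$ and emptiness of $I_2$ yields $d(i)-d^*(i)\ge-\lambda+1$, whence $b(i)-b^*(i)=(x(i)-x^*(i))-(d(i)-d^*(i))\le2\lambda-2<2\lambda$; the remaining positive components are trivially below $\lambda$. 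Thus every positive component of $b-b^*$ is below $2\lambda$, so $\beta^+\le2\lambda n$ and $\beta^-\le2\lambda n+\lambda$. Combining, $S_-\le(\lambda-1)n+2\lambda n+\lambda\le3\lambda n+\lambda$, and therefore $\|x-x^*\|_1=2S_-\le6\lambda n+2\lambda\le8\lambda n$, the final step using $n\ge1$.

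Condition (b) is the mirror image of (a) under interchanging $(d,b,x)$ and $(d^*,b^*,x^*)$: negating every difference sends $I_2,I_4,I_5$ to $I_1,I_3,I_6$ and turns $b(N)-b^*(N)<\lambda$ into $b^*(N)-b(N)>-\lambda$, while leaving $\|x-x^*\|_1$ and the relation $x(N)=x^*(N)$ unchanged, so the bound for (b) follows verbatim from that for (a). I expect the main obstacle to be exactly the handoff between the local and the global estimates: recognizing that the components $b(i)-b^*(i)$ on $J$ cannot be controlled one coordinate at a time and must instead be absorbed into the aggregate $\beta^-$, whose control in turn rests on first bounding $\beta^+$ coordinatewise through $I_2=I_5=\emptyset$.
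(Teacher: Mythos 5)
Your proof is correct, and it organizes the argument differently from the paper, though both rest on the same raw ingredients: the identity $x - x^* = (d - d^*) + (b - b^*)$, coordinatewise consequences of $I_2 = I_4 = I_5 = \emptyset$ obtained by contradiction, the aggregate hypothesis $b(N) - b^*(N) > -\lambda$, integrality, and (for case (b)) the swap symmetry exchanging $(d,b,x)$ with $(d^*,b^*,x^*)$, which the paper leaves implicit by proving only case (a). The paper never estimates $x - x^*$ directly: from the emptiness of the three sets it derives the two-sided coordinatewise bounds $d(i) - d^*(i) \ge -2\lambda$ and $b(i) - b^*(i) \le 2\lambda$ for every $i \in N$, pairs each with the corresponding aggregate bound ($d(N) - d^*(N) < \lambda$, resp.\ $b(N) - b^*(N) > -\lambda$), proves $\|d - d^*\|_1 \le 4\lambda n$ and $\|b - b^*\|_1 \le 4\lambda n$ separately, and adds. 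You instead bound the negative excess of $x - x^*$ itself, invoking $I_4 = \emptyset$ only on coordinates whose deficiency is at least $\lambda$, and absorbing the residue into the aggregate negative part $\beta^-$ of $b - b^*$, which you control via $\beta^+ \le 2\lambda n$ (coordinatewise, from $I_2 = I_5 = \emptyset$) plus the hypothesis. What each buys: the paper's version is modular and symmetric in $d$ and $b$, yielding two intermediate statements of independent interest, while yours is leaner and slightly sharper (you reach $6\lambda n + 2\lambda$ before relaxing to $8\lambda n$), never needs a bound on $\|d - d^*\|_1$ at all, and avoids the paper's case split on $\suppm(d - d^*) = N$ versus $\suppm(d - d^*) \ne N$ --- where, incidentally, the paper's claimed inequality $d^*(N) - d(N) < \lambda$ in the first case does not follow from $d(N) - d^*(N) < \lambda$ as written (its conclusion $\le 4\lambda n$ still holds via the coordinatewise bound $d(i) - d^*(i) \ge -2\lambda$); your argument sidesteps this issue entirely.
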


\begin{proof}
 We consider the case where (a) holds, and 
show that $\| d-d^* \|_1 \le 4 \lambda n$
and $\| b-b^* \|_1 \le 4 \lambda n$ hold,
which implies
\[
\| x-x^* \|_1  \le \| d-d^* \|_1+ \| b-b^* \|_1
\le 8 \lambda n.
\]

 Since $I_2= I_4= I_5  = \emptyset$, it holds that
\begin{equation}
\label{eqn:i2i4i5-bound-1}
d(i) - d^*(i) \ge -2 \lambda, \quad b(i) - b^*(i) \le 2 \lambda
\qquad (i \in N).
\end{equation}
 Since $b(N)-b^*(N) > - \lambda$
and $x(N) - x^*(N)=0$, we have $d(N)-d^*(N) < \lambda$.

 To prove the inequality $\| d-d^* \|_1 \le 4 \lambda n$,
let $H = \suppm(d-d^*)$.
 If $H = N$, then
we have $d(i) - d^*(i) < 0$ for every $i \in N$, implying that
\[
\| d-d^* \|_1
= \sum_{i\in N}|d(i)-d^*(i)| 
=  \sum_{i\in N}[d^*(i)-d(i)]
=  d^*(N)-d(N)< \lambda \le 4\lambda n.
\]
 If $H \ne N$, then
we have
\begin{align*}
\| d-d^* \|_1
= \sum_{i\in N'}|d(i)-d^*(i)| 
& = [ d(N \setminus H) - d^*(N \setminus H)]  +[ d^*(H) - d(H)] \\
& =  [ d(N) - d^*(N)]  + 2 [ d^*(H) - d(H)] \\
& < \lambda + 4 \lambda |H| \le 4\lambda n,   
\end{align*}
where the first inequality is by  $d(N)-d^*(N)< \lambda$ and 
$d(i) - d^*(i) \ge - 2\lambda$ for $i \in N$,
and the second inequality is by $|H| < n$.
 The inequality $\| b-b^* \|_1 \le 4 \lambda n$ can be proved similarly
by using the inequalities $b(N)-b^*(N)> - \lambda$ and 
$b(i) - b^*(i) \le  2\lambda$ for $i \in N$.
\end{proof}

\begin{lemma}
We have $\| x-x^* \|_1 \le 8 \lambda n$.
\end{lemma}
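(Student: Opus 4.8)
The plan is to reduce, by a purely combinatorial case analysis on which of the sets $I_1,\dots,I_6$ are empty, to one of the two estimates already proved in Lemmas~\ref{lem:i4i6-bound} and \ref{lem:i2i4i5-bound}. First I would dispose of the case handled directly by Lemma~\ref{lem:i4i6-bound}: if $I_3=I_5=\emptyset$ or $I_4=I_6=\emptyset$, then $\|x-x^*\|_1\le 4\lambda n\le 8\lambda n$ and we are done. So from this point on I assume that neither $\{I_3,I_5\}$ nor $\{I_4,I_6\}$ consists of two empty sets.

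Under this assumption the next step is to show that exactly one of two configurations occurs, by feeding Lemmas~\ref{lem:setI-2} and \ref{lem:setI-3} into a short Boolean argument. If $I_3=\emptyset$, then $I_5\ne\emptyset$ (else $I_3=I_5=\emptyset$), so $I_6=\emptyset$ by Lemma~\ref{lem:setI-3}, whence $I_4\ne\emptyset$ (else $I_4=I_6=\emptyset$); this is configuration (A): $I_3=I_6=\emptyset$ while $I_4,I_5\ne\emptyset$. If instead $I_3\ne\emptyset$, then $I_4=\emptyset$ by Lemma~\ref{lem:setI-2}, so $I_6\ne\emptyset$, whence $I_5=\emptyset$ by Lemma~\ref{lem:setI-3}; this is configuration (B): $I_4=I_5=\emptyset$ while $I_3,I_6\ne\emptyset$. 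In (A), the nonemptiness of $I_4$ and $I_5$ forces $I_1=\emptyset$ through Lemma~\ref{lem:setI-4}(i); symmetrically, in (B) Lemma~\ref{lem:setI-4}(iii) forces $I_2=\emptyset$. Thus (A) gives $I_1=I_3=I_6=\emptyset$ and (B) gives $I_2=I_4=I_5=\emptyset$, matching the index sets in the two clauses of Lemma~\ref{lem:i2i4i5-bound}.

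The step I expect to be the main obstacle is verifying the sign hypothesis on $b(N)-b^*(N)$ required to invoke Lemma~\ref{lem:i2i4i5-bound}; this is exactly where the capacity constraint $b(N)\le B$ must be brought in. In configuration (A), the nonemptiness of $I_4$ and $I_5$ is incompatible, via the contrapositive of Lemma~\ref{lem:setI-4}(ii), with $b^*(N)<B$; hence $b^*(N)=B$, so $b(N)-b^*(N)=b(N)-B\le 0<\lambda$, which is precisely the hypothesis of Lemma~\ref{lem:i2i4i5-bound}(b). In configuration (B), I would argue by contradiction: if $b(N)-b^*(N)\le-\lambda$, then $b(N)-B\le b(N)-b^*(N)\le-\lambda$ using $b^*(N)\le B$, so Lemma~\ref{lem:setI-4}(iv) would force one of $I_3,I_6$ to be empty, contradicting (B); hence $b(N)-b^*(N)>-\lambda$, the hypothesis of Lemma~\ref{lem:i2i4i5-bound}(a). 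Applying the appropriate clause of Lemma~\ref{lem:i2i4i5-bound} then yields $\|x-x^*\|_1\le 8\lambda n$ in both configurations, which completes the proof (and, together with the reduction at the start of the subsection, establishes Theorem~\ref{thm:bdr-proximity}).
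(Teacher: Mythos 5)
Your proposal is correct and follows essentially the same route as the paper: reduce via Lemmas~\ref{lem:setI-2} and \ref{lem:setI-3} to the two nontrivial configurations ($I_4=I_5=\emptyset$ with $I_3,I_6\ne\emptyset$, and $I_3=I_6=\emptyset$ with $I_4,I_5\ne\emptyset$), handle the easy cases by Lemma~\ref{lem:i4i6-bound}, and in the remaining cases use Lemma~\ref{lem:setI-4} (parts (i)--(ii), resp.\ (iii)--(iv)) together with the feasibility bounds $b(N)\le B$, $b^*(N)\le B$ to verify the hypotheses of Lemma~\ref{lem:i2i4i5-bound}(a) or (b). Your only departure is presentational (deriving the two configurations by a Boolean elimination rather than listing four cases), and all the sign verifications match the paper's argument.
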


\begin{proof}
 By Lemmas \ref{lem:setI-2}  and \ref{lem:setI-3}, 
we have the following four possibilities:
\begin{quote}
(Case 1)  $I_4 = I_6= \emptyset$, \quad
(Case 2)  $I_3  = I_5  = \emptyset$, \\
(Case 3)   $I_4  = I_5  = \emptyset$, $I_3  \ne \emptyset,\  I_6  \ne \emptyset$, 
(Case 4)  $I_3   = I_6  = \emptyset$,
 $I_4  \ne \emptyset,\  I_5  \ne \emptyset$. 
\end{quote}
 If Case 1 or 2 holds, then we have $\| x-x^* \|_1 \le 8 \lambda n$
by Lemma \ref{lem:i4i6-bound}.
 Below we give proofs for Cases 3 and 4.

[Proof for Case 3] \quad
 By Lemma \ref{lem:setI-4} (iii) and (iv),
we have $I_2 = \emptyset$ and $b(N) - B > - \lambda$;
the second inequality implies $b(N) - b^*(N) > - \lambda$
since $b^*(N) \le B$.
 Hence,  we have $\| x-x^* \|_1 \le 8 \lambda n$
by Lemma \ref{lem:i2i4i5-bound}.

[Proof for Case 4] \quad
 By Lemma \ref{lem:setI-4} (i) and (ii),
we have $I_1 = \emptyset$ and $b^*(N) = B$;
the second equation implies $b(N) - b^*(N) <  \lambda$
since $b(N) \le B$.
 Hence,  we have $\| x-x^* \|_1 \le 8 \lambda n$
by Lemma \ref{lem:i2i4i5-bound}.
\end{proof}

\end{document}